\numberwithin{equation}{section}
\newcommand{\leqnomode}{\tagsleft@true\let\veqno\@@leqno}
\newcommand{\reqnomode}{\tagsleft@false\let\veqno\@@eqno}
\newcommand{\C}{{\mathbb{C}}}
\DeclareMathOperator{\SL}{SL}
\DeclareMathOperator{\Gr}{Gr}
\DeclareMathOperator{\Hol}{Hol}
\DeclareMathOperator{\SU}{SU}
\DeclareMathOperator{\Sp}{Sp}
\DeclareMathOperator{\Spin}{Spin}
\DeclareMathOperator{\SO}{SO}
\DeclareMathOperator{\gr}{gr}
\DeclareMathOperator{\dex}{dex}
\DeclareMathOperator{\rank}{rank}
\DeclareMathOperator{\IGr}{IGr}
\DeclareMathOperator{\OGr}{OGr}
\newcommand{\Exterior}{\mathchoice{{\textstyle\bigwedge}}%
	{{\bigwedge}}%
	{{\textstyle\wedge}}%
	{{\scriptstyle\wedge}}}
\tikzstyle{point} = [
\tikzstyle{double line} = [
\tikzstyle{state}=[draw, circle, inner sep = 0.06cm]
\tikzstyle{triple line} = [
\newtheorem{theorem}{Theorem}[section]
\newtheorem{lemma}[theorem]{Lemma}
\newtheorem{proposition}[theorem]{Proposition}
\newtheorem{corollary}[theorem]{Corollary}
\theoremstyle{definition}
\newtheorem{example}[theorem]{Example}
\newtheorem{definition}[theorem]{Definition}
\theoremstyle{remark}
\newtheorem{remark}[theorem]{Remark}
\begin{document}
	
\title[Complete intersection hyperk\"{a}hler fourfolds in rational homogeneous varieties]{Complete intersection hyperk\"{a}hler fourfolds \\ with respect to equivariant vector bundles \\ over rational homogeneous varieties of Picard number one}

\author{Eunjeong Lee}
\address{Department of Mathematics, Chungbuk National University, Cheongju 28644, Republic of Korea}
\email{eunjeong.lee@chungbuk.ac.kr}

\author{Kyeong-Dong Park}
\address{Department of Mathematics and Research Institute of Molecular Alchemy, Gyeongsang National University, 501 Jinju-daero, Jinju 52725, Republic of Korea}
\email{kdpark@gnu.ac.kr} 

\keywords{Hyperk\"{a}hler fourfolds, rational homogeneous varieties, equivariant vector bundles, Borel--Weil--Bott theorem}
\subjclass[2010]{Primary: 14J35, 53C26, Secondary: 14M15, 14J60} 

\begin{abstract}
We classify fourfolds with trivial canonical bundle which are zero loci of general global sections of completely reducible equivariant vector bundles over exceptional homogeneous varieties of Picard number one. 
By computing their Hodge numbers, 
we see that there exist no hyperk\"{a}hler fourfolds among them. 
This implies that a hyperk\"{a}hler fourfold represented as the zero locus of a general global section of a completely reducible equivariant vector bundle over a rational homogeneous variety of Picard number one is one of the two cases described by Beauville--Donagi and Debarre--Voisin. 
\end{abstract}
\maketitle
\date{\today}

\section{Introduction}
The holonomy group of the Levi-Civita connection on a Riemannian manifold is a Lie group given by parallel transports along loops based at a fixed point, and is a global invariant which measures constant tensors on the manifold. 
Riemannian manifolds with special holonomy groups are interesting in various aspects, and they include Calabi--Yau, hyperk\"{a}hler, and quaternionic K\"{a}hler manifolds. 
Recall that a $d$-dimensional Riemannian manifold $(M, g)$ is called \emph{Calabi--Yau}, \emph{hyperk\"{a}hler}, or \emph{quaternionic K\"{a}hler} 
if its holonomy group $\Hol(M, g)$ is $\SU(n), \Sp(m)$, or $\Sp(m) \cdot \Sp(1)$, respectively, where $d=2n$, or $4m$. 
However, it is extremely hard to construct Riemannian manifolds with a specific holonomy; see, for example,~\cite{Besse87} for details. 
For instance, there are no known examples of compact quaternionic K\"{a}hler manifolds that are not symmetric spaces.
It is also a difficult problem to provide explicit examples of compact hyperk\"{a}hler manifolds. 

Known examples of compact hyperk\"{a}hler manifolds are K3 surfaces, the Hilbert schemes of $n$ points on K3 surfaces, and generalized Kummer varieties of abelian surfaces described by Beauville~\cite{Beauville83}. 
We also have two explicit descriptions for compact hyperk\"{a}hler manifolds of complex dimension 4 due to Beauville--Donagi~\cite{BD} and Debarre--Voisin~\cite{DV} 
even though these are deformation-equivalent to the Hilbert square of a K3 surface of genus 8 and 12, respectively. 
Beauville and Donagi proved that the variety of complex lines on a smooth cubic hypersurface in the complex projective space $\mathbb P^5$ is hyperk\"{a}hler, 
and it can be described as the zero locus of a general global section of the third symmetric power $S^3 \mathcal U^*$ of the dual universal subbundle of the complex Grassmannian $\Gr(2, 6)$. 
Debarre and Voisin gave another example of compact hyperk\"{a}hler fourfolds as the zero locus of a general global section of the third exterior power $\Exterior^3 \mathcal U^*$ of the dual universal subbundle of the Grassmannian $\Gr(6, 10)$. 
We might expect to find another example of compact hyperk\"{a}hler fourfolds arising as the zero loci of general global sections of equivariant vector bundles over rational homogeneous varieties $G/P$. 
Here, $G$ is a complex semisimple Lie group and $P \subset G$ is a maximal parabolic subgroup, that is, $P$ is a maximal proper parabolic subgroup with respect to inclusion.  
We note that $G/P$ is a rational homogeneous variety of Picard number one if and only if $P$ is maximal. 
However, the actual results are far from this expectation.

Recently, Benedetti~\cite{Ben} proved that 
if $Z$ is a hyperk\"{a}hler fourfold 
which is the zero locus of a general global section of completely reducible, globally generated, equivariant vector bundles over a Grassmannian, or an isotropic (that is, orthogonal or symplectic) Grassmannian, 
then $Z$ is  either of Beauville--Donagi type or of Debarre--Voisin type. 
This provides a classification of hyperk\"{a}hler fourfolds which are zero loci of general global sections of completely reducible equivariant vector bundles over rational homogeneous varieties of the \emph{classical} Lie groups, that is, of type $A, B, C, D$.
Being motivated by the above works, 
we classify hyperk\"{a}hler fourfolds appearing in rational homogeneous varieties of the \emph{exceptional} Lie groups, that is, of type $E, F, G$. 
See Table~\ref{Dynkin} for the Dynkin diagrams of type $E, F, G$.
More precisely, we obtain the following result. 

\begin{theorem} 
\label{hyperkahler fourfold}
Let $G/P$ be a rational homogeneous variety of Picard number one, where $G$ is a complex simple Lie group of exceptional type and $P \subset G$ is a maximal parabolic subgroup.
For a completely reducible, globally generated, equivariant vector bundle $\mathcal F$ over $G/P$, 
if $Z \subset G/P$ is a fourfold which is the zero locus of a general global section of $\mathcal F$, 
then $Z$ is not hyperk\"{a}hler.
\end{theorem}

Together with Benedetti's result~\cite{Ben}, this theorem immediately implies the following. 

\begin{corollary}
\label{corollary of main result}
Let $\mathcal F$ be a completely reducible, globally generated, equivariant vector bundle over a rational homogeneous variety $G/P$ of Picard number one.
If $Z \subset G/P$ is a hyperk\"{a}hler fourfold which is the zero locus of a general global section of $\mathcal F$, 
then $Z$ is either of Beauville--Donagi type or of Debarre--Voisin type, 
that is, $(G/P, \mathcal F)$ is either $(\Gr(2, 6), S^3 \mathcal U^*)$ or $(\Gr(6, 10), \Exterior^3 \mathcal U^*)$ up to natural identifications of $\Gr(k,n)$ with $\Gr(n-k,n)$.
\end{corollary} 

In order to prove Theorem~\ref{hyperkahler fourfold}, we give a classification of the pairs $(G/P, \mathcal F)$ such that the zero locus of a general global section of $\mathcal F$ is a fourfold with trivial canonical bundle
when $G$ is a simple Lie group of exceptional type. 
A similar analysis has been parallelly obtained in~\cite{BenedettiThesis}, with the usage of \texttt{Macaulay2}.
For Grassmannians of classical Lie type, Benedetti classified such $d$-folds for $2 \leq d \leq 4$ in~\cite{Ben}. 
In~\cite{IIM19}, Inoue, Ito and Miura also gave a similar classification and geometric descriptions of Calabi--Yau $3$-folds for Grassmannians.

\begin{theorem} 
\label{classification}
Let $G/P$ be a rational homogeneous variety of Picard number one, and let $\mathcal F$ be a completely reducible, globally generated, equivariant vector bundle over $G/P$.
If $G$ is a simple Lie group of exceptional type and the zero locus $Z$ of a general global section of $\mathcal F$ is a fourfold with trivial canonical bundle, 
then the only possible pairs $(G/P, \mathcal F)$ are listed in Table~\ref{table1} up to natural identifications. 
\end{theorem}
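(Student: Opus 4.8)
The plan is to run through the homogeneous varieties $G/P$ one at a time and reduce each to a short arithmetic problem. Since $G/P$ has Picard number one, $P$ is a maximal parabolic, hence determined by the choice of a single node of the Dynkin diagram of $G$; as $G$ ranges over $G_2,F_4,E_6,E_7,E_8$ this yields the $2+4+6+7+8=27$ varieties to be examined. For each of them I would record $N:=\dim(G/P)$ and the Fano index $r$, defined by $-K_{G/P}=\mathcal O(r)$ where $\mathcal O(1)$ generates $\operatorname{Pic}(G/P)\cong\mathbb Z$; both are read off from the root datum in the standard way. A general section of a globally generated $\mathcal F$ is transverse (Bertini), so when non-empty its zero locus $Z$ is smooth of pure codimension $\rank(\mathcal F)$; moreover the restriction $\operatorname{Pic}(G/P)\to\operatorname{Pic}(Z)$ is injective because $\mathcal O(1)|_Z$ is ample on the positive-dimensional $Z$. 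Hence, by the adjunction formula $K_Z=(K_{G/P}+c_1(\mathcal F))|_Z$, the variety $Z$ is a fourfold with trivial canonical bundle if and only if
\[
\rank(\mathcal F)=N-4\qquad\text{and}\qquad c_1(\mathcal F)=r .
\]

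Next I would make the admissible bundles explicit. A completely reducible equivariant bundle is a direct sum $\mathcal F=\bigoplus_k\mathcal E_{\lambda_k}$ of irreducible homogeneous bundles, $\mathcal E_\lambda$ being attached to the irreducible representation of the Levi factor $L$ of $P$ with highest weight $\lambda$. By the Borel--Weil--Bott theorem, $\mathcal E_\lambda$ is globally generated precisely when $\lambda$ is dominant for $G$; thus each $\lambda_k$ is a nonnegative integral combination of the fundamental weights and, discarding trivial summands, $\lambda_k\neq 0$. To each such $\lambda$ I attach its rank $e_\lambda:=\rank(\mathcal E_\lambda)$, computed by the Weyl dimension formula for $L$, and its degree $d_\lambda:=c_1(\mathcal E_\lambda)\ge 1$, computed from the action of the centre of $L$ on $\det V_\lambda$ (equivalently from Borel--Weil--Bott); both depend only on $\lambda$ and on the chosen node. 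The two displayed conditions become $\sum_k e_{\lambda_k}=N-4$ and $\sum_k d_{\lambda_k}=r$, which is an arithmetic problem over the (finite, explicitly describable) set of globally generated $\mathcal E_\lambda$ on $G/P$.

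For most of the $27$ varieties this problem has no solution. Indeed, since $d_{\lambda_k}\ge 1$ for every summand, the second equation shows $\mathcal F$ has at most $r$ summands, so by the first equation some summand has rank $\ge(N-4)/r$. When $N$ is large compared with $r$ this forces a globally generated irreducible homogeneous bundle whose rank or degree is too large to be available: the ranks of the irreducible homogeneous bundles are the dimensions of the irreducible representations of the semisimple part of $L$ (an exceptional or classical group of smaller rank), and beyond the line bundles $\mathcal O(a)=\mathcal E_{a\omega_i}$ the smallest such dimensions already exceed $N-4$, or force a total degree larger than $r$. This quickly eliminates the great majority of the varieties, including all of $E_7$ and $E_8$ and all those of large dimension. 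One is then left with a handful of low-dimensional cosets — the quadric $Q^5=G_2/P_1$, the $G_2$-adjoint variety $G_2/P_2$, and a few small cosets of $F_4$ and $E_6$ — for which I would write out the complete list of globally generated $\mathcal E_\lambda$ of rank $\le N-4$, solve the two equations by hand, and collect the solutions into Table~\ref{table1}.

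The main obstacle is the bookkeeping in this final step: one must verify that, for each of the remaining exceptional $G/P$, the list of globally generated irreducible homogeneous bundles of bounded rank is complete — this is where Borel--Weil--Bott and the representation theory of the exceptional Levi factors enter — and one must compute correctly all the ranks $e_\lambda$ and degrees $d_\lambda$ feeding the search. Since $d_\lambda$ grows at least linearly in the coefficients of $\lambda$ while $e_\lambda$ grows faster, only finitely many weights can contribute and the verification terminates; the resulting table coincides with the computer-assisted classification of Benedetti~\cite[Theorem~2.4.3]{BenedettiThesis}, which serves as an independent check.
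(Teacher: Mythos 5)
Your overall strategy is the same as the paper's: adjunction reduces the problem to the two numerical conditions $\rank(\mathcal F)=\dim(G/P)-4$ and $c_1(\mathcal F)=\iota(G/P)$ (the paper's conditions~\eqref{conditions}), followed by a case-by-case run over the $27$ maximal parabolics. But there are two concrete problems. First, your reduction to an ``if and only if'' arithmetic problem is not valid: the two numerical conditions are necessary but not sufficient, because the zero locus of a general section can be \emph{empty}. This is not hypothetical. On the Cayley plane $E_6/P_1$ ($\dim=16$, $\iota=12$) the rank-$10$ bundle $\mathcal E_{\varpi_6}$ is globally generated with $\det\mathcal E_{\varpi_6}=\mathcal O(5)$, so $\mathcal E_{\varpi_6}\oplus\mathcal O(a)\oplus\mathcal O(b)$ with $a+b=7$ solves your two equations (rank $12$, degree $12$) — yet none of these appears in Table~\ref{table1}, because a general section of $\mathcal E_{\varpi_6}$ vanishes nowhere (the paper invokes \cite[Section~1.2]{FM} in Proposition~\ref{E6/P1} precisely to kill these solutions). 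Your procedure as written would insert three spurious rows into the table; you need a geometric non-vanishing input that the arithmetic cannot supply.

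Second, your elimination mechanism for the large varieties does not work as stated. You argue that some summand must have rank at least $(N-4)/r$ and that ``the smallest such dimensions already exceed $N-4$'' beyond line bundles; but the Levi factors of the big parabolics do admit irreducible representations of dimension far below $N-4$ (e.g.\ the semisimple Levi of $E_8/P_1$ is $\Spin(14,\C)$ with a $14$-dimensional representation while $N-4=74$, and the Levi of $E_8/P_4$ is $\SL(3)\times\SL(2)\times\SL(5)$ with many small representations). Counting summands therefore eliminates essentially nothing. The working mechanism — which is what the paper uses in Lemma~\ref{ratio} and Proposition~\ref{linear section} — is the mediant inequality: if every available irreducible summand satisfies $\dex(\mathcal F_i)/\rank(\mathcal F_i)>\iota(G/P)/(\dim(G/P)-4)$, then so does their direct sum, contradicting~\eqref{conditions}. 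This forces you to compute the minimal ratio $\dex/\rank$ over all globally generated irreducible bundles on each $G/P$ (using the formulas of Proposition~\ref{dex of classical types} applied to the Levi), which is the actual content of the case analysis; your phrase ``or force a total degree larger than $r$'' gestures at this but gives no argument. With the ratio lemma installed and the non-emptiness check added on $E_6/P_1$, your outline does become the paper's proof.
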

\begin{table}[H]
\renewcommand{\arraystretch}{1.2}
\begin{tabular}{c c c c c c c c | l}
		\toprule
		No. & $G/P$ & $\dim (G/P)$ & $\iota(G/P)$ & $\mathcal F$ & $h^{0,2}$ & $h^{1,1}$ & $h^{1,3}$ & \\
		\midrule 
$1$	&	$E_6/P_1$	&	$16$  &	$12$ 	&	$\mathcal O(1)^{\oplus 12}$ & $0$ & $1$ & $102$ 
& Proposition~\ref{E6/P1}
\\
\midrule 
$2$	&	$E_6/P_2$	&	$21$  &	$11$ 	&	$\mathcal E_{\varpi_1}^{\oplus 2} \oplus \mathcal O(1)^{\oplus 5}$  & $0$ &  $1$ & $87$ 
& \multirow{3}{*}{Proposition~\ref{E6/P2}}
\\
$2'$	&	$E_6/P_2$	&	$21$  &	$11$ 	&	$\mathcal E_{\varpi_1} \oplus \mathcal E_{\varpi_6} \oplus \mathcal O(1)^{\oplus 5}$  & $0$ &  $1$ & $87$
\\
$2''$	&	$E_6/P_2$	&	$21$  &	$11$ 	&	$\mathcal E_{\varpi_6}^{\oplus 2} \oplus \mathcal O(1)^{\oplus 5}$  & $0$ &  $1$ & $87$
\\
\midrule 
$3$	&	$E_6/P_3$	&	$25$  &	$9$ 	&	$\mathcal E_{\varpi_1}^{\oplus 3} \oplus \mathcal E_{\varpi_6}^{\oplus 3}$  & $0$ &  $1$ & $48$
& \multirow{2}{*}{Proposition~\ref{E6/P3}}
\\
$4$	&	$E_6/P_3$	&	$25$  &	$9$ 	&	$\mathcal E_{\varpi_6}^{\oplus 4} \oplus \mathcal O(1)$  & $0$ &  $1$ & $72$
\\
\midrule 
$5$	&	$E_7/P_1$	&	$33$  &	$17$ 	&	$\mathcal E_{\varpi_7}^{\oplus 2} \oplus \mathcal O(1)^{\oplus 5}$  & $0$ &  $1$ & $87$
& Proposition~\ref{E7/P1}
\\
\midrule 
$6$	&	$F_4/P_1$	&	$15$  &	$8$ 	&	$\mathcal E_{\varpi_4} \oplus \mathcal O(1)^{\oplus 5}$  & $0$ &  $1$ & $86$ 
& Proposition~\ref{F4/P1} 
\\
\midrule 
$1'$	&	$F_4/P_4$	&	$15$  &	$11$ 	&	$\mathcal O(1)^{\oplus 11}$  & $0$ &  $1$ & $102$
& \multirow{2}{*}{Proposition~\ref{F4/P4}}
\\
$7$	&	$F_4/P_4$	&	$15$  &	$11$ 	&	$\mathcal E_{\varpi_1} \oplus \mathcal O(1)^{\oplus 4}$  & $0$ &  $1$ & $87$
\\
\midrule 
$8$	&	$G_2/P_1$	&	$5$  &	$5$ 	&	$\mathcal O(5)$  & $0$ &  $1$ & $356$
& \multirow{2}{*}{Proposition~\ref{G2/P1}}
\\
$9$	&	$G_2/P_2$	&	$5$  &	$3$ 	&	$\mathcal O(3)$  & $0$ &  $1$ & $258$
\\
		\bottomrule
\end{tabular}
\caption{Calabi--Yau $4$-folds in exceptional homogeneous varieties of Picard number one.}
\label{table1}
\end{table}
In Table~\ref{table1}, $\iota(G/P)$ means the Fano index of a rational homogeneous variety $G/P$, 
and $\mathcal E_{\lambda}$ is the irreducible equivariant vector bundle associated to a $P$-dominant weight $\lambda$ (see Section~\ref{subsec_equiv_vb_on_hom_varieties} for details).

For the maximal parabolic subgroup $P_4$ associated to a simple root $\alpha_4$ of $F_4$, 
the rational homogeneous variety $F_4/P_4$ is a general hyperplane section of the Cayley plane $E_6/P_1 \subset \mathbb P^{26}$ (see~\cite[Section~6.3]{LM}).
It follows from that the 27-dimensional fundamental $E_6$-module $V_{E_6}(\varpi_1)$ may be regarded as an $F_4$-module using the embedding of $F_4$ in $E_6$
and it decomposes as $V_{F_4}(\varpi_4) \oplus V_{F_4}(0)$ (see~\cite[Proposition~13.32]{Carter}). 
Hence, the Calabi--Yau fourfold in No. $1'$ is the same as No. $1$ in Table \ref{table1}. 
The outer automorphism of~$E_6$ induces a bundle isomorphism between $\mathcal E_{\varpi_1}$ and $\mathcal E_{\varpi_6}$; 
hence the Calabi--Yau fourfolds in No. $2', 2''$ are the same as No. $2$ in Table \ref{table1}. 
Similarly, we can identify $E_6/P_1$ with $E_6/P_6$ (respectively, $E_6/P_3$ with $E_6/P_5$) by the projective equivalences induced from the outer automorphism of $E_6$. 

As a direct consequence of the classification process, we also 
get the classification of all possible pairs $(G/P, \mathcal F)$ such that the zero locus $Z$ of a general global section of $\mathcal F$ is a threefold with trivial canonical bundle.
\begin{proposition} 
Let $G/P$ be a rational homogeneous variety of Picard number one, and let $\mathcal F$ be a completely reducible, globally generated, equivariant vector bundle over $G/P$.
If $G$ is a simple Lie group of exceptional type and the zero locus $Z$ of a general global section of $\mathcal F$ is a threefold with trivial canonical bundle, 
then the only possible pairs $(G/P, \mathcal F)$ are listed in Table~\ref{table2} up to natural identifications. 
\end{proposition}
\begin{table}[H]
\renewcommand{\arraystretch}{1.2}
\begin{tabular}{c c c c c c c c c}
		\toprule
		No. & $G/P$ & $\dim (G/P)$ & $\iota(G/P)$ & $\mathcal F$ &	$h^{0,1}$, $h^{0,2}$	&	$h^{0,0}$, $h^{0,3}$, $h^{1,1}$	&	 $h^{1,2}$ 	&	 Euler characteristic $\chi$ \\
		\midrule 
$1$	&	$E_6/P_3$	&	$25$  &	$9$ 	&	$\mathcal E_{\varpi_1} \oplus \mathcal E_{\varpi_6}^{\oplus 4}$	&	$0$	&	$1$	&	$31$	&	$-60$
\\
$2$	&	$G_2/P_1$	&	$5$  &	$5$ 	&	$\mathcal O(1) \oplus \mathcal O(4)$ 		& 	$0$	&	$1$	&	$89$	&	$-176$
\\
$3$	&	$G_2/P_1$	&	$5$  &	$5$ 	&	$\mathcal O(2) \oplus \mathcal O(3)$ 		& 	$0$	&	$1$	&	$73$	&	$-144$
\\
$4$	&	$G_2/P_1$	&	$5$  &	$5$ 	&	$\mathcal E_{\varpi_2} \otimes \mathcal O(1)$ 		& 	$0$	&	$1$	&	$50$	&	$-98$
\\
$5$	&	$G_2/P_2$	&	$5$  &	$3$ 	&	$\mathcal O(1) \oplus \mathcal O(2)$ 		&	$0$	&	$1$	&	$61$	&	$-120$
\\
$6$	&	$G_2/P_2$	&	$5$  &	$3$ 	&	$\mathcal E_{\varpi_1} \otimes \mathcal O(1)$ 		&	$0$	&	$1$	&	$50$	&	$-98$
\\
		\bottomrule
\end{tabular}
\caption{Calabi--Yau $3$-folds in exceptional homogeneous varieties of Picard number one.}
\label{table2}
\end{table}

We notice that the result already obtained in \cite{IMOU} with the aid of a computer program \texttt{Mathematica} and the Hodge numbers for No. 1, No.~4, No.~5, No.~6 were computed in that paper. 
As $G_2/P_1$ is isomorphic to the hyperquadric $Q^5 \subset \mathbb{P}^6$, the threefolds given in No.~2 and No.~3 are complete intersection Calabi--Yau threefolds in $\mathbb{P}^6$, and their Hodge numbers are well-known (for example, see \cite{GHL89}). 

The paper is organized as follows. 
In Section~\ref{section_Hyperkahler}, we recall well-known properties of hyperk\"{a}hler manifolds and equivariant vector bundles over rational homogeneous varieties. 
In Section~\ref{sec_classification_fourfolds}, we list up all possible pairs $(G/P, \mathcal F)$ such that the zero locus $Z$ of a general global section of $\mathcal F$ is a fourfold with trivial canonical bundle. 
In Section~\ref{section_Hodge_numbers}, the Hodge numbers $h^{2, 0}(Z)$ and $h^{1, 3}(Z)$ of $Z$ are computed to prove Theorem~\ref{hyperkahler fourfold} and Theorem~\ref{classification}. 
Throughout this paper, we work over the complex number field $\mathbb C$.

\subsection*{Acknowledgments} 
The authors would like to express thank Professor Jaehyun Hong for helpful discussions. 
The authors also thank Vladimiro Benedetti and Makoto Miura for noticing some omission in Tables and informing references \cite{BenedettiThesis} and \cite{IMOU}, respectively.  
The first author was supported by the Institute for Basic Science (IBS-R003-D1), by the National Research Foundation of Korea (NRF) grant funded by the Korea government (MSIT) (RS-2023-00239947), and by POSCO Science Fellowship of POSCO TJ Park Foundation.  
The second author was supported by the Institute for Basic Science (IBS-R003-D1) and by the National Research Foundation of Korea (NRF) grant funded by the Korea government (MSIT) (NRF-2019R1A2C3010487, NRF-2021R1C1C2092610). 
He was also supported by Learning \& Academic research institution for Master’s·PhD students, and Postdocs (LAMP) Program of the National Research Foundation of Korea (NRF) grant funded by the Ministry of Education (RS-2023-00301974).

\section{Hyperk\"{a}hler manifolds and equivariant vector bundles} 
\label{section_Hyperkahler}

\subsection{Hyperk\"{a}hler manifolds and irreducible holomorphic symplectic manifolds} 

In this section, we recall notion of holomorphic symplectic forms and holomorphic symplectic manifolds from~\cite{Besse87} and their properties.
Let $\mathbb H$ be the associative algebra of quaternions with the imaginary units $i, j, k$. 
The symplectic group $\Sp(m)$ is defined as the group of $m \times m$ matrices $A$ over $\mathbb H$ satisfying $A\overline{A}^t=I$, 
where $x \mapsto \overline{x}$ is the conjugation given by $\overline{x}=x_0 - x_1 i - x_2 j - x_3 k$ for $x=x_0 + x_1 i + x_2 j + x_3 k \in \mathbb H$. 

\begin{definition} \label{Hyperkaehler manifolds}
A $4m$-dimensional Riemannian manifold $(M, g)$ is called \emph{hyperk\"{a}hler} if its holonomy group~$\Hol(M,g)$ is isomorphic to the symplectic group $\Sp(m)$, that is, $\Hol(M, g) \cong \Sp(m)$. 
\end{definition}

A hyperk\"{a}hler manifold $(M,g)$ is naturally equipped with three (almost) complex structures $J_1$, $J_2$, $J_3$ 
such that $J_1 \circ J_2 = J_3$ and $\nabla J_{\ell}= 0$ for $\ell =1, 2, 3$, 
where $\nabla$ is the Levi-Civita connection of $g$, i.e., $g$ is a K\"{a}hler metric with respect to each of these complex structures.
From the corresponding K\"{a}hler forms $\omega_1, \omega_2, \omega_3$, 
we get a complex $2$-form $\omega_2 + \sqrt{-1} \omega_3$ with respect to $J_1$ which makes $(M, J_1)$ a \emph{holomorphic symplectic manifold}. 

\begin{definition} \label{Irreducible holomorphic symplectic manifolds}
Let $(X, J)$ be a complex manifold of (complex) dimension $2m$. 
A closed holomorphic $2$-form~$\omega$ is called \emph{holomorphic symplectic} if the $m$th wedge product $\omega^m$ is nonzero at every point. 
An \emph{irreducible holomorphic symplectic manifold} is a simply-connected compact complex K\"{a}hler manifold $(X, J)$ 
such that the space of global holomorphic $2$-forms $H^0(X, \Omega^2_X)$ is generated by a holomorphic symplectic $2$-form.
\end{definition}

\begin{remark}
Note that $\omega^m$ is a nonvanishing holomorphic global section of the canonical line bundle $K_X= \Exterior^{2m} T^* X$. 
Thus, holomorphic symplectic manifolds have trivial canonical bundle, and admit Ricci-flat metrics by Yau's theorem~\cite{Yau78}. 
\end{remark}

\begin{definition}
A compact K\"{a}hler manifold of complex dimension $n$ is called to be \emph{Calabi--Yau} if its canonical bundle is trivial, or equivalently, it admits a (Ricci-flat) K\"{a}hler metric with holonomy group contained in $\SU(n)$.
\end{definition}

We can relate these differential geometric concepts to algebraic geometric objects such as Dolbeault cohomology and sheaf cohomology due to Beauville.

\begin{proposition}[{\cite[Proposition~4]{Beauville83}}] \label{Beauville's theorem}
If $(M, J, g)$ is a compact connected hyperk\"{a}hler manifold, then $M$ is simply-connected and admits a unique holomorphic symplectic structure $\omega$ \textup{(}up to scaling factor\textup{)}. 
Conversely, if $(M, J, \omega)$ is an irreducible holomorphic symplectic manifold, then it admits a hyperk\"{a}hler metric. 
\end{proposition}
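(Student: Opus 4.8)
The plan is to reduce both implications to three classical facts: the Bochner vanishing/parallelism technique on compact Ricci-flat K\"ahler manifolds, Yau's solution of the Calabi conjecture, and the Cheeger--Gromoll splitting theorem (or, equivalently for our purposes, the Bogomolov--Beauville decomposition). For the direct implication, I start from $\Hol(M,g)\cong\Sp(m)$; since $\Sp(m)=\SU(2m)\cap\Sp(2m,\C)\subseteq\SU(2m)$, choosing the parallel complex structure $J$ accordingly makes $g$ a Ricci-flat K\"ahler metric. By the Bochner technique, on a compact Ricci-flat K\"ahler manifold every holomorphic $p$-form is parallel, so $H^{0}(M,\Omega^{p}_{M})$ is identified with the space of holonomy-invariants $\bigl(\Exterior^{p}(\C^{2m})^{*}\bigr)^{\Sp(m)}$. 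A short representation-theoretic computation, using the primitive (Lefschetz) decomposition of $\Exterior^{\bullet}\C^{2m}$ under the standard symplectic form, shows that this space is one-dimensional for $p$ even and zero for $p$ odd; for $p=2$ its generator is the complex symplectic form, which is nondegenerate at each point, so $(M,J)$ carries a holomorphic symplectic structure $\omega$ that is unique up to scaling. Summing these dimensions gives $\chi(\mathcal O_{M})=m+1$, and $b_{1}(M)=0$ because the standard $\Sp(m)$-representation on $\mathbb R^{4m}$ has no nonzero fixed vector.

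To promote this to simple-connectedness, I apply the Cheeger--Gromoll splitting theorem: the Riemannian universal cover of the compact Ricci-flat manifold $M$ is isometric to $\mathbb R^{k}\times N$ with $N$ compact and simply connected. A nonzero Euclidean factor would make the restricted holonomy reducible, contradicting the irreducibility of the standard $\Sp(m)$-representation on $\mathbb R^{4m}$; hence $k=0$, the universal cover $\widetilde M$ is compact, and $\Gamma:=\pi_{1}(M)$ is finite, of order $d$ say. The finite \'etale cover $\widetilde M\to M$ gives $\chi(\mathcal O_{\widetilde M})=d\cdot\chi(\mathcal O_{M})$, while $\widetilde M$ is again a compact Ricci-flat K\"ahler manifold with holonomy $\Sp(m)$, so the same computation as above yields $\chi(\mathcal O_{\widetilde M})=m+1=\chi(\mathcal O_{M})$; therefore $d=1$ and $M$ is simply connected. (One could instead quote the Bogomolov--Beauville decomposition theorem directly at this point.)

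For the converse, let $(X,J,\omega)$ be an irreducible holomorphic symplectic manifold, which we take to be K\"ahler as in \cite{Beauville83}. Then $\omega^{m}$ trivializes $K_{X}$, so $c_{1}(X)=0$, and Yau's theorem provides a Ricci-flat K\"ahler metric $g$ in every K\"ahler class; its holonomy lies in $\SU(2m)$, and by the Bochner technique $\omega$ is parallel, so the holonomy also preserves the complex symplectic form and hence lies in $\SU(2m)\cap\Sp(2m,\C)=\Sp(m)$. To upgrade this inclusion to an equality, I invoke the Bogomolov--Beauville decomposition: the simply connected compact Ricci-flat K\"ahler manifold $X$ is a product of Calabi--Yau and hyperk\"ahler factors, with no flat factor because $X$ is simply connected, and the hypotheses $h^{0}(X,\Omega^{2}_{X})=1$ and $\omega^{m}\neq 0$ force $X$ to consist of a single factor, whose holonomy is $\Sp(m)$. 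Thus $g$ is a hyperk\"ahler metric on $X$. The step I expect to be the real obstacle is precisely the passage from a cohomological statement ($b_{1}=0$, respectively holonomy $\subseteq\Sp(m)$) to the geometric conclusion (simple-connectedness, respectively holonomy $=\Sp(m)$): this is where one genuinely needs the splitting theorems and the hypothesis that $H^{0}(X,\Omega^{2}_{X})$ is exactly one-dimensional, whereas Yau's theorem and the $\Sp(m)$-invariant computation are routine once invoked.
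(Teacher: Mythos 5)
Your proof is correct and is essentially Beauville's original argument for \cite[Proposition~4]{Beauville83}, which the paper cites rather than reproves: the Bochner/holonomy-invariant computation identifying $H^0(M,\Omega^p_M)$ with $\bigl(\Exterior^p(\C^{2m})^*\bigr)^{\Sp(m)}$, the Cheeger--Gromoll splitting combined with the multiplicativity of $\chi(\mathcal O)$ under finite \'etale covers to force simple-connectedness, and Yau's theorem plus the Bogomolov--Beauville decomposition for the converse. You were also right to insert the K\"ahler hypothesis explicitly in the converse direction; it is part of Beauville's definition but is omitted from Definition~\ref{Irreducible holomorphic symplectic manifolds} as stated in the paper, and without it Yau's theorem cannot be invoked.
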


Consequently, we use the names `irreducible holomorphic symplectic manifolds' and `compact hyperk\"{a}hler manifolds' interchangeably. 

\begin{corollary} \label{criterion}
Let $(M, J)$ be a compact connected K\"{a}hler fourfold with trivial canonical bundle. 
Then $M$ carries a hyperk\"{a}hler metric $g$ if and only if $h^2(M, \mathcal O_M)=1$ for the structure sheaf $\mathcal O_M$ of $M$. 
\end{corollary}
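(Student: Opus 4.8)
The plan is to derive both implications from the Beauville--Bogomolov decomposition theorem, combined with Yau's theorem \cite{Yau78} and an elementary count of holomorphic Euler characteristics under finite étale covers. Throughout I write $h^{p,q}(M) = \dim H^q(M,\Omega^p_M)$, so that $h^q(M,\mathcal O_M) = h^{0,q}(M) = h^{q,0}(M)$ by Hodge symmetry on the compact K\"ahler manifold $M$.

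For the forward direction I would argue as follows. If $M$ carries a hyperk\"ahler metric then, using the identification of compact hyperk\"ahler manifolds with irreducible holomorphic symplectic manifolds furnished by Proposition~\ref{Beauville's theorem}, $M$ is irreducible holomorphic symplectic; hence by Definition~\ref{Irreducible holomorphic symplectic manifolds} its space of holomorphic $2$-forms is spanned by a single symplectic form, and therefore $h^2(M,\mathcal O_M) = h^{0,2}(M) = h^{2,0}(M) = 1$. This direction uses nothing about the dimension.

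For the converse, assume $h^2(M,\mathcal O_M)=1$. Since $K_M\cong\mathcal O_M$, Serre duality gives $h^i(M,\mathcal O_M)=h^{4-i}(M,\mathcal O_M)$ for all $i$, so using $h^0(M,\mathcal O_M)=1$ and the hypothesis $h^2(M,\mathcal O_M)=1$ one finds
\[
\chi(M,\mathcal O_M) \;=\; 2\,h^0(M,\mathcal O_M) - 2\,h^1(M,\mathcal O_M) + h^2(M,\mathcal O_M) \;=\; 3 - 2\,h^{0,1}(M),
\]
which is in particular odd. Moreover $K_M\cong\mathcal O_M$ forces $c_1(M)=0$, so $M$ admits a Ricci-flat K\"ahler metric by Yau's theorem, and the Beauville--Bogomolov decomposition theorem \cite{Beauville83} yields a connected finite étale cover $\pi\colon\widetilde M\to M$ of some degree $N$ with
\[
\widetilde M \;\cong\; T \times \prod_i V_i \times \prod_j X_j ,
\]
a product of a complex torus $T$, simply connected Calabi--Yau manifolds $V_i$ of dimension $\geq 3$, and irreducible holomorphic symplectic manifolds $X_j$. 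I would then exploit that $\chi(\widetilde M,\mathcal O_{\widetilde M}) = N\,\chi(M,\mathcal O_M)$ is odd, while by the K\"unneth formula it is the product of the holomorphic Euler characteristics of the factors: a positive-dimensional torus factor forces this product to vanish, and in dimension $4$ the remaining possibilities --- a single irreducible holomorphic symplectic fourfold $X$, a four-dimensional Calabi--Yau manifold, or a product of two K3 surfaces --- have $\chi(\mathcal O)$ equal to $3$, $2$, and $4$ respectively, so oddness leaves only $\widetilde M = X$. Finally, pulling back holomorphic $1$-forms along $\pi$ is injective and $H^0(X,\Omega^1_X)=0$ since $X$ is simply connected, so $h^{0,1}(M)=0$ and hence $\chi(M,\mathcal O_M)=3$; combined with $\chi(\widetilde M,\mathcal O_{\widetilde M})=3$ this gives $N=1$. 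Thus $M\cong X$ is irreducible holomorphic symplectic, and Proposition~\ref{Beauville's theorem} then provides a hyperk\"ahler metric on $M$.

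The only non-formal ingredient is the Beauville--Bogomolov decomposition theorem; everything else is bookkeeping. The step I expect to require the most care is the classification of the possible factorizations of $\widetilde M$ in dimension $4$ and the exclusion of a nontrivial finite quotient of an irreducible holomorphic symplectic fourfold, which is handled above first by a parity statement about $\chi(\mathcal O)$ and then by its exact value. I would also emphasize in the write-up that the hypothesis $K_M\cong\mathcal O_M$ is genuinely needed and cannot be weakened to $c_1(M)=0$ in $H^2(M,\mathbb R)$: for instance, the product of a K3 surface with an Enriques surface has $h^{0,2}=1$ and vanishing real first Chern class but is not simply connected, and it is precisely the triviality of $K_M$ that makes the Serre-duality evaluation of $\chi(M,\mathcal O_M)$ above available.
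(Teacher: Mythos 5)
Your proof is correct, and the forward direction is identical to the paper's (Proposition~\ref{Beauville's theorem} plus Hodge symmetry). For the converse the two routes diverge: the paper simply quotes \cite[Proposition~3.16]{Ben} to conclude that $h^2(M,\mathcal O_M)=1$ forces $M$ to be simply connected, and then declares $M$ irreducible holomorphic symplectic; you instead unwind the Beauville--Bogomolov decomposition by hand, using multiplicativity of $\chi(\cdot,\mathcal O)$ under finite \'etale covers, the K\"unneth factorization of $\chi(\mathcal O)$, and the short list of possible four-dimensional factorizations (one IHS fourfold, one strict Calabi--Yau fourfold, or a product of two K3 surfaces, with $\chi(\mathcal O)=3,2,4$). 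What your route buys is self-containedness --- the only external input is the decomposition theorem itself, and the argument makes explicit \emph{why} $h^{0,2}=1$ together with $K_M\cong\mathcal O_M$ pins down the IHS factor and kills any nontrivial quotient --- whereas the paper's citation hides exactly this bookkeeping. Your closing remark on $\mathrm{K3}\times\mathrm{Enriques}$ is a genuinely useful addition that the paper does not make.

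One small repair to the ordering of your converse: the clause ``so oddness leaves only $\widetilde M=X$'' is premature, because $\chi(\widetilde M,\mathcal O_{\widetilde M})=N\,\chi(M,\mathcal O_M)$ is odd only once you know $N$ is odd, which you do not at that stage (e.g.\ $N=2$ with $\chi(M,\mathcal O_M)=1$ is not yet excluded by parity). The fix uses only ingredients already in your write-up: first exclude the torus factor, since it would give $\chi(\widetilde M,\mathcal O_{\widetilde M})=0=N\chi(M,\mathcal O_M)$ against $\chi(M,\mathcal O_M)=3-2h^{0,1}(M)$ being odd; then all factors of $\widetilde M$ are simply connected, so the injection $H^0(M,\Omega^1_M)\hookrightarrow H^0(\widetilde M,\Omega^1_{\widetilde M})=0$ gives $h^{0,1}(M)=0$ and hence $\chi(M,\mathcal O_M)=3$; finally $3N\in\{2,3,4\}$ forces $N=1$ and $\widetilde M=M=X$. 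With that reordering the proof is complete.
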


\begin{proof}
Suppose that $M$ admits a hyperk\"{a}hler metric. 
Then $M$ is an irreducible holomorphic symplectic manifold by Proposition~\ref{Beauville's theorem}, hence $h^0(M, \Omega_M^2) = \dim H^0(M, \Omega_M^2) = 1$. 
Since the Dolbeault isomorphism theorem says that $h^{2,0}(M) = h^0(M, \Omega_M^2)$ and $h^{0,2}(M) = h^2(M, \mathcal O_M)$, 
we have  $h^2(M, \mathcal O_M)=1$ from the conjugation $h^{2,0}(M) = h^{0,2}(M)$. 

Conversely, if $M$ is connected and $h^2(M, \mathcal O_M)=1$, then the Euler--Poincar\'e characteristic $\chi(\mathcal O_M)$ is equal to 3 so that $M$ is simply-connected 
by \cite[Proposition 3.16]{Ben} using the Beauville--Bogomolov decomposition theorem \cite[Th\'eor\`eme~1]{Beauville83} for compact K\"{a}hler manifolds with trivial canonical bundle. 
As $M$ is a compact simply-connected K\"{a}hler fourfold with trivial canonical bundle, the Bogomolov decomposition theorem in \cite{Bogomolov} implies that $M$ is either a product of two K3 surfaces, or a Calabi--Yau fourfold with $h^0(M, \Omega_M^2)=0$, or an irreducible holomorphic symplectic fourfold. 
From the assumption $h^0(M, \Omega_M^2)=h^2(M, \mathcal O_M)=1$, the manifold $M$ is irreducible holomorphic symplectic and $H^0(M, \Omega^2_M)$ is generated by a holomorphic symplectic $2$-form. 
Thus, the result follows from Proposition~\ref{Beauville's theorem}.
\end{proof}

\begin{proposition}\label{direct sum of line bundles} 
Let $Z$ be a fourfold with trivial canonical bundle which is a general complete intersection with respect to a  direct sum of line bundles over a smooth Fano variety $X$. 
Then $Z$ is not hyperk\"{a}hler but Calabi--Yau. 
\end{proposition}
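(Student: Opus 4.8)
The plan is to reduce the statement to a cohomology vanishing and then apply Corollary~\ref{criterion}. Since $X$ is Fano it is projective, so $Z$ is a compact K\"ahler manifold; it is connected (by the Lefschetz hyperplane theorem, or by the Koszul argument below with $i=0$), and $K_Z\cong\mathcal{O}_Z$ by hypothesis. By Corollary~\ref{criterion}, $Z$ carries a hyperk\"ahler metric if and only if $h^2(Z,\mathcal{O}_Z)=1$. Hence it suffices to prove
$$h^i(Z,\mathcal{O}_Z)=0 \qquad \text{for } i=1,2,3.$$
Indeed, $h^{0,2}(Z)=h^2(Z,\mathcal{O}_Z)=0\neq 1$ then shows $Z$ is not hyperk\"ahler, while the vanishing of $h^{0,1}(Z),h^{0,2}(Z),h^{0,3}(Z)$ together with $K_Z\cong\mathcal{O}_Z$ and $\pi_1(Z)=\pi_1(X)=1$ (Lefschetz hyperplane theorem, $X$ being simply connected) shows $Z$ is Calabi--Yau.

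To compute these groups I would use the Koszul resolution. Write $\mathcal{F}=\bigoplus_{j=1}^{c}\mathcal{L}_j$ with $c=\dim X-4$. Each $\mathcal{L}_j$ is a quotient of the globally generated bundle $\mathcal{F}$, hence globally generated, and in the setting relevant here ($X$ of Picard number one, so $\mathcal{L}_j=\mathcal{O}_X(a_j)$ with $a_j\geq 1$) each $\mathcal{L}_j$ is ample. For a general section $s\in H^0(X,\mathcal{F})$, a Bertini argument shows that $Z=\{s=0\}$ is smooth of pure codimension $c$, and the Koszul complex of $s$ resolves $\mathcal{O}_Z$:
$$0\longrightarrow \Exterior^{c}\mathcal{F}^{*}\longrightarrow \cdots \longrightarrow \Exterior^{1}\mathcal{F}^{*}\longrightarrow \mathcal{O}_X\longrightarrow \mathcal{O}_Z\longrightarrow 0.$$
For $1\leq p\leq c$ the bundle $\Exterior^{p}\mathcal{F}^{*}$ is a direct sum of line bundles of the form $(\mathcal{L}_{j_1}\otimes\cdots\otimes\mathcal{L}_{j_p})^{*}$, that is, duals of ample line bundles; moreover $\Exterior^{c}\mathcal{F}^{*}=(\det\mathcal{F})^{*}\cong K_X$, using that $\mathrm{Pic}(X)\to\mathrm{Pic}(Z)$ is injective so that $K_X\otimes\det\mathcal{F}\cong\mathcal{O}_X$.

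The next step is the standard diagram chase. By the Kodaira vanishing theorem together with Serre duality, $H^q(X,\mathcal{M}^{*})=0$ for every ample line bundle $\mathcal{M}$ and every $q<\dim X=c+4$; hence $H^q(X,\Exterior^{p}\mathcal{F}^{*})=0$ for all $1\leq p\leq c$ and all $q\leq c+3$. Also $H^q(X,\mathcal{O}_X)=0$ for $q\geq 1$, since $X$ is Fano. Splitting the Koszul resolution into short exact sequences $0\to\mathcal{Z}_p\to\Exterior^{p-1}\mathcal{F}^{*}\to\mathcal{Z}_{p-1}\to 0$ with $\mathcal{Z}_0=\mathcal{O}_Z$ and $\mathcal{Z}_c=\Exterior^{c}\mathcal{F}^{*}$, and feeding these vanishings into the associated long exact sequences, one obtains for each $i\in\{1,2,3\}$ a chain of isomorphisms
$$H^i(Z,\mathcal{O}_Z)\cong H^{i+1}(X,\mathcal{Z}_1)\cong\cdots\cong H^{i+c}(X,\Exterior^{c}\mathcal{F}^{*})\cong H^{4-i}(X,\mathcal{O}_X)^{*}=0,$$
where all intermediate groups $H^{i+p}(X,\Exterior^{p}\mathcal{F}^{*})$ have cohomological degree $\leq c+3<\dim X$ and hence vanish, and the last isomorphism is Serre duality applied to $\Exterior^{c}\mathcal{F}^{*}\cong K_X$. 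This gives $h^i(Z,\mathcal{O}_Z)=0$ for $i=1,2,3$, completing the argument.

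The one delicate point is the passage from ``globally generated'' to ``ample'' for the individual summands $\mathcal{L}_j$: a direct summand of a globally generated bundle on a Fano variety is only guaranteed to be nef, while the vanishing theorem used above needs ampleness. On a Fano variety of Picard number one --- the setting in which this proposition is actually applied --- this is automatic, and more generally one can assume the $\mathcal{L}_j$ big and nef and invoke Kawamata--Viehweg vanishing in place of Kodaira vanishing. Everything else --- Bertini smoothness of the general complete intersection, the Koszul resolution, and the cohomology chase --- is routine.
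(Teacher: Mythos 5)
Your proposal is correct and follows essentially the same route as the paper: resolve $\mathcal O_Z$ by the Koszul complex of the section, kill the intermediate cohomology of the negative line bundles $\Exterior^p\mathcal F^*$ by Kodaira vanishing, identify the surviving group via Serre duality on $\Exterior^{\dim X-4}\mathcal F^*\cong K_X$, and conclude $h^2(Z,\mathcal O_Z)=0$ so that Corollary~\ref{criterion} rules out the hyperk\"ahler case. The paper's proof is a terser version of the same diagram chase (it also implicitly works with $\mathcal F=\bigoplus_j\mathcal O_X(a_j)$, $a_j>0$, i.e.\ in the Picard-number-one setting you flag), so no further comparison is needed.
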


\begin{proof}
Suppose that $X$ has dimension $n$ and Fano index $\iota$. 
By the assumption, $Z$ is the zero locus of a general global section of $\mathcal F = \mathcal O_{X}(a_j)^{\oplus (n-4)}$ with $\sum_{j=1}^{n-4} a_j=\iota$ and $a_j > 0$. 
It is immediately checked that $H^p(X, \Exterior^q \mathcal F^*)$ are nontrivial only for $p=n, q=n-4$ by the Kodaira vanishing theorem. 
Using the Serre duality, 
$H^{n}(X, \Exterior^{n-4} \mathcal F^*) = H^0(X, \mathcal O_X)^* = \mathbb C$. 
Then, from the Koszul complex
$$0\to \Exterior^{n-4} \mathcal F^* \to \Exterior^{n-5} \mathcal F^* \to \cdots \to \Exterior^2 \mathcal F^* \to \mathcal F^* \to \mathcal O_X \to \mathcal O_Z \to 0,$$
we have $h^4(Z, \mathcal O_Z)=1$ and $h^2(Z, \mathcal O_Z)=0$ 
which means that $Z$ is not hyperk\"{a}hler by Corollary~\ref{criterion}.
\end{proof}

In general, complete intersections of dimension greater than $2$ are never irreducible holomorphic symplectic, 
which is one reason why it is extremely hard to construct irreducible holomorphic symplectic manifolds.

\subsection{Equivariant vector bundles over homogeneous varieties} 
\label{subsec_equiv_vb_on_hom_varieties}

Let $G$ be a simply-connected complex semisimple Lie group and $P \subset G$ be a parabolic subgroup. 
We will follow the notations in~\cite{FH} for the basics on the representation theory of a Lie algebra.
For an integral dominant weight $\lambda$ with respect to $P$, 
we have an irreducible representation $V_P(\lambda)$ of $P$ with the highest weight $\lambda$, and 
denote by $\mathcal E_{\lambda}$ the corresponding \emph{irreducible equivariant vector bundle} $G\times_P V_P(\lambda)^*$ over $G/P$:
$$\mathcal E_{\lambda}\coloneqq G \times_P V_P(\lambda)^* = (G \times V_P(\lambda)^*)/P,$$
where the equivalence relation is given by $(g, v) \sim (gp, p^{-1} . v)$ for $p \in P$.

\begin{theorem}[Borel--Weil--Bott theorem~\cite{Bott57}] 
\label{BBW}
Let $G$ be a simply-connected complex semisimple Lie group and $P \subset G$ be a parabolic subgroup. 
Let $\rho$ denote the sum of fundamental weights of $G$. 
For an integral dominant weight $\lambda$ with respect to $P$, the following holds. 
\begin{itemize}
\item If a weight $\lambda+\rho$ is singular, that is, it is orthogonal to some \textup{(}positive\textup{)} root of $G$, 
then the cohomology groups $H^i(G/P, \mathcal E_{\lambda})$ vanish for all $i$. 
\item Otherwise, $\lambda+\rho$ is regular, that is, it lies in the interior of some Weyl chamber, 
then 
\[
H^{\ell(w)}(G/P, \mathcal E_{\lambda})=V_G(w(\lambda+\rho)-\rho)^\ast
\] 
and any other cohomology vanishes. 
Here, $w\in W$ is a unique element of the Weyl group of $G$ such that $w(\lambda+\rho)$ is regular dominant, 
and $\ell(w)$ means the length of $w \in W$, that is, $\ell(w)$ is the minimum integer such that $w$ can be expressed as a product of $\ell(w)$ simple reflections.  
\end{itemize}
\end{theorem}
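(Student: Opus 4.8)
The plan is to deduce the statement from the single case of a $\mathbb{P}^1$-bundle together with one simple reflection, using Borel--Weil as the base of an induction on $\ell(w)$. The first step is to reduce to $P=B$, a Borel subgroup: fix $B\subseteq P$ and a maximal torus $T\subseteq B$, and for a weight $\mu$ write $\mathcal{L}_\mu\coloneqq G\times_B\mathbb{C}_{-\mu}$ for the corresponding line bundle on $G/B$, so that $\mathcal{E}_\mu$ is $\mathcal{L}_\mu$ when $P=B$. The map $\pi\colon G/B\to G/P$ has fibre the flag variety of the Levi factor $L$ of $P$; since $\lambda$ is $L$-dominant, Borel--Weil for $L$ together with cohomology-and-base-change gives $R^0\pi_\ast\mathcal{L}_\lambda = \mathcal{E}_\lambda$ and $R^{>0}\pi_\ast\mathcal{L}_\lambda = 0$, so the Leray spectral sequence yields $H^i(G/P,\mathcal{E}_\lambda)\cong H^i(G/B,\mathcal{L}_\lambda)$ for all $i$. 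The base case on $G/B$ is Borel--Weil itself: for $\mu$ dominant, Frobenius reciprocity and the Peter--Weyl decomposition of $\mathbb{C}[G]$ identify $H^0(G/B,\mathcal{L}_\mu)$ with $V_G(\mu)^\ast$, and since $K_{G/B}=\mathcal{L}_{-2\rho}$ while $\mathcal{L}_{\mu+2\rho}$ is ample (because $\mu+2\rho$ is regular dominant), the Kodaira vanishing theorem applied to $\mathcal{L}_\mu=\mathcal{L}_{\mu+2\rho}\otimes K_{G/B}$ kills all higher cohomology.

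The inductive mechanism comes from the minimal parabolics: for each simple root $\alpha_i$, let $P_i\supset B$ be the minimal parabolic of type $i$ and $\pi_i\colon G/B\to G/P_i$ the associated $\mathbb{P}^1$-bundle. On a fibre $\mathbb{P}^1$ the bundle $\mathcal{L}_\mu$ has degree $\langle\mu,\alpha_i^\vee\rangle = n-1$, where $n\coloneqq\langle\mu+\rho,\alpha_i^\vee\rangle$, and cohomology-and-base-change identifies $R^j\pi_{i\ast}\mathcal{L}_\mu$ with the equivariant bundle on $G/P_i$ attached to $H^j(\mathbb{P}^1,\mathcal{O}(n-1))$. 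Writing $s_i\cdot\mu\coloneqq s_i(\mu+\rho)-\rho$ for the dot action, the classical computation of $H^\bullet(\mathbb{P}^1,\mathcal{O}(d))$ (that is, Borel--Weil--Bott for $\SL_2$) combined with the Leray spectral sequence of $\pi_i$ gives: (i) if $n=0$ then every $R^j\pi_{i\ast}\mathcal{L}_\mu$ vanishes, hence $H^\bullet(G/B,\mathcal{L}_\mu)=0$; and (ii) if $n>0$ then $R^0\pi_{i\ast}\mathcal{L}_\mu$ and $R^1\pi_{i\ast}\mathcal{L}_{s_i\cdot\mu}$ are the same equivariant bundle on $G/P_i$ while the remaining direct images vanish, so both Leray sequences degenerate and $H^k(G/B,\mathcal{L}_\mu)\cong H^{k+1}(G/B,\mathcal{L}_{s_i\cdot\mu})$ for every $k$; equivalently, if $\langle\nu+\rho,\alpha_i^\vee\rangle<0$ then $H^k(G/B,\mathcal{L}_\nu)\cong H^{k-1}(G/B,\mathcal{L}_{s_i\cdot\nu})$.

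To finish, suppose $\lambda+\rho$ is regular and $w\in W$ is the unique element with $w(\lambda+\rho)$ dominant. If $\lambda+\rho$ is not already dominant, pick a simple root $\alpha_j$ with $\langle\lambda+\rho,\alpha_j^\vee\rangle<0$ and replace $\lambda$ by $s_j\cdot\lambda$; by (ii) this lowers the cohomological degree by one, and since the unique Weyl element sending the new weight to the dominant chamber is $ws_j$ with $w\alpha_j<0$, its length is $\ell(w)-1$. After $\ell(w)$ such reductions the weight becomes the dominant $w(\lambda+\rho)-\rho$, whose cohomology by Borel--Weil is $V_G(w(\lambda+\rho)-\rho)^\ast$ in degree $0$; unwinding the degree shifts concentrates $H^\bullet(G/B,\mathcal{L}_\lambda)$ in degree $\ell(w)$ with the asserted value. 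If instead $\lambda+\rho$ is singular, the same descent (always reflecting against a simple coroot with which the current weight pairs negatively) cannot terminate at a strictly dominant weight because singularity is preserved by the dot action of $W$, so at some stage the weight must pair to zero with a simple coroot, and case (i) then gives $H^\bullet(G/B,\mathcal{L}_\lambda)=0$.

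The main obstacle is step (ii): making the degree-raising isomorphism precise requires matching $R^1\pi_{i\ast}\mathcal{L}_{s_i\cdot\mu}$ with $R^0\pi_{i\ast}\mathcal{L}_\mu$ as $G$-equivariant sheaves on $G/P_i$, which means handling relative Serre duality along the $\mathbb{P}^1$-bundle together with the exact way $s_i\cdot$ acts on the fibrewise degree, and getting the direction of the index shift right in both the $n>0$ and $n<0$ regimes is where the bookkeeping is delicate. Everything else -- the Leray degenerations, the reduced-word combinatorics guaranteeing that the descent terminates after exactly $\ell(w)$ steps, and the ampleness input for Kodaira vanishing -- is standard. (A purely algebraic alternative that bypasses Kodaira is Demazure's induction via Demazure modules, which I would fall back on to avoid transcendental methods.)
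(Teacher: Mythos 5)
The paper does not prove this statement at all: Theorem~\ref{BBW} is quoted as a classical result with a citation to Bott's 1957 paper, and is used throughout as a black box. So there is no internal proof to compare against, and your proposal should be judged on its own terms. What you have written is a correct outline of Demazure's proof of Borel--Weil--Bott, and the overall architecture is sound: the Leray reduction from $G/P$ to $G/B$ via Borel--Weil on the fibres $P/B$, the base case on $G/B$ via Peter--Weyl plus Kodaira vanishing for $\mathcal L_{\mu}=K_{G/B}\otimes\mathcal L_{\mu+2\rho}$, the degree-shifting isomorphism $H^k(G/B,\mathcal L_\mu)\cong H^{k+1}(G/B,\mathcal L_{s_i\cdot\mu})$ coming from the $\mathbb P^1$-bundles $\pi_i$, and the length bookkeeping $\ell(ws_j)=\ell(w)-1$ when $\langle\lambda+\rho,\alpha_j^\vee\rangle<0$. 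Your treatment of the singular case is slightly compressed but repairable: the descent terminates at a weakly dominant $\mu+\rho$, and a singular weakly dominant weight must be orthogonal to a \emph{simple} coroot (since $\beta^\vee$ for a positive root $\beta$ is a nonnegative combination of simple coroots and all pairings are $\geq 0$), so case (i) applies and the vanishing propagates back through the degree shifts. You correctly identify the one genuine technical crux, namely the $G$-equivariant identification $R^0\pi_{i*}\mathcal L_\mu\cong R^1\pi_{i*}\mathcal L_{s_i\cdot\mu}$, which requires either relative Serre duality along $\pi_i$ together with the $\SL(2,\C)$ computation, or Demazure's purely algebraic induction; either route closes the gap, so the proposal is acceptable as a proof sketch of this cited theorem.
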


Unfortunately, since $P$ is not reductive in general, its representations are difficult to study. 
Nevertheless, it is often possible to get results about an equivariant vector bundle from irreducible bundles by considering a filtration by $P$-submodules.
If $V$ is a $P$-module, it has a filtration $0 \subset V_t \subset \cdots \subset V_1 \subset V_0 = V$ such that $V_{i}/V_{i+1}$ is an irreducible $P$-module.
Moreover, we can consider $V$ as an $L$-module by the restriction for the reductive part $L$ of a Levi decomposition of $P=L U$, 
and define a new $P$-module structure by extending trivially to the unipotent radical $U$.
The associated graded module $\gr(V)$ is a direct sum of irreducible $P$-modules. 
By a series of extensions, $V$ can be reconstructed from the summands of $\gr(V)$. 

\begin{example}[Beauville--Donagi~\cite{BD}]
\label{Beauville--Donagi}
Let $Z$ be the zero locus of a general global section of the third symmetric power $S^3 \mathcal U^*$ of the dual universal subbundle of the Grassmannian $\Gr(2, 6)$. 
Recall that the dimension of $\Gr(2, 6)$ is 8 and $K_{\Gr(2, 6)} = \mathcal O(-6)$. 
Since the rank of $S^3 \mathcal U^*$ is 4 and $\det (S^3 \mathcal U^*) \cong \mathcal O(6)$ (see Proposition~\ref{dex of classical types}), 
$Z$ is a fourfold with trivial canonical bundle by the adjunction formula. 
Then we have the Koszul complex associated to a general section $s$ of the equivariant vector bundle $\mathcal F = S^3 \mathcal U^*$: 
$$0\to \Exterior^4 \mathcal F^* \to \Exterior^3 \mathcal F^* \to  \Exterior^2 \mathcal F^* \to \mathcal F^* \to \mathcal O_{\Gr(2,6)} \to \mathcal O_Z \to 0.$$
Using the Littlewood--Richardson rule (see~\cite[Section~2.3]{Weyman} for details), we obtain 
$$0\to \mathcal O(-6) \to S^3 \mathcal U(-3) \to S^4 \mathcal U(-1) \oplus \mathcal O(-3) \to S^3 \mathcal U \to \mathcal O_{\Gr(2,6)} \to \mathcal O_Z \to 0.$$
By the Borel--Weil--Bott theorem, 
$H^8(\Gr(2,6), \mathcal O(-6))=H^4(\Gr(2,6), S^4 \mathcal U(-1))=\mathbb C$ and the other non-trivial cohomologies vanish. 
For example, $S^3 \mathcal U(-3) = \mathcal E_{3\varpi_1 -6 \varpi_2}$ and the weight
\[
3\varpi_1 -6 \varpi_2 + \rho = 4\varpi_1 - 5\varpi_2 + \varpi_3 + \varpi_4 + \varpi_5
\]  
is singular, 
where $\varpi_1, \dots, \varpi_5$ are the fundamental weights of $\SL(6, \mathbb C)$. 
As a straightforward application of the Borel--Weil--Bott theorem, 
we see $H^{i}(\Gr(2,6), S^3 \mathcal U(-3))=0$ for all $i$.
Therefore, $H^0(Z, \Omega^2_Z) \cong H^2(Z, \mathcal O_Z)=\mathbb C$ and $Z$ is a hyperk\"{a}hler fourfold by Corollary~\ref{criterion}.
\end{example}

\begin{remark}
Similarly, we can prove that the zero locus of a general global section of the third exterior power $\Exterior^3 \mathcal U^*$ of the dual universal subbundle of the Grassmannian $\Gr(6, 10)$ is also a hyperk\"{a}hler fourfold (see~\cite[Remark~2.6]{DV}). 
\end{remark}

\section{Classification of fourfolds with trivial canonical bundle}
\label{sec_classification_fourfolds}
Let $G/P$ be an exceptional homogeneous variety of Picard number one, and $\mathcal F$ be a completely reducible, globally generated, equivariant vector bundle over $G/P$. 
Here, we say that a vector bundle $\mathcal F$ is completely reducible if it can be expressed as a direct sum of irreducible vector bundles. 
In this section, we list up all possible pairs $(G/P, \mathcal F)$ such that the zero locus $Z$ of a general global section of $\mathcal F$ is a fourfold with trivial canonical bundle based on the method done by K\"{u}chle~\cite{Kuchle} and Benedetti~\cite{Ben}. 

In what follows, we fix an ordering on the simple roots as in Table~\ref{Dynkin}; our conventions agree with that in~\cite{FH}, which is called the Bourbaki ordering for the simple roots. 
Moreover, for a parabolic subgroup~$P$, a weight $\lambda$ is called \emph{$P$-dominant} if $\langle \lambda, \alpha \rangle \geq 0$ for all positive roots $\alpha$ such that $\mathfrak{g}_{\alpha} \subset \mathfrak{l}$,
where $\mathfrak{g}_{\alpha}$ is the root space corresponding to $\alpha$ and $\mathfrak{l}$ is the Lie algebra of the reductive part $L$ in a Levi decomposition of $P$. 
	\begin{table}[b]
	\bgroup
	\def\arraystretch{1.5}
		\begin{tabular}{ll|ll }
			\toprule
			$\Phi$ & Dynkin diagram & $\Phi$ & Dynkin diagram  \\
			\midrule
			$E_6$ & 
			\begin{tikzpicture}[scale=.5, baseline=-0.5ex]
\node[state, label=below:{1}] (1) {};
\node[state, label=below:{3}] (3) [right = of 1] {};
\node[state, label=below:{4}] (4) [right = of 3] {};
\node[state, label=right:{2}] (2) [above =of 4] {};
\node[state, label=below:{5}] (5) [right = of 4] {};
\node[state, label=below:{6}] (6) [right = of 5] {};

\draw (1)--(3)--(4)--(5)--(6);
\draw (2)--(4);
			\end{tikzpicture}
			
			&
			$E_7$& 
			\begin{tikzpicture}[scale=.5, baseline=-0.5ex]
\node[state, label=below:{1}] (1) {};
\node[state, label=below:{3}] (3) [right = of 1] {};
\node[state, label=below:{4}] (4) [right = of 3] {};
\node[state, label=right:{2}] (2) [above =of 4] {};
\node[state, label=below:{5}] (5) [right = of 4] {};
\node[state, label=below:{6}] (6) [right = of 5] {};
\node[state, label=below:{7}] (7) [right = of 6] {};

\draw (1)--(3)--(4)--(5)--(6)--(7);
\draw (2)--(4);
			\end{tikzpicture}
			
			\\
			$E_8$ & 
			\begin{tikzpicture}[scale=.5, baseline=-0.5ex]
\node[state, label=below:{1}] (1) {};
\node[state, label=below:{3}] (3) [right = of 1] {};
\node[state, label=below:{4}] (4) [right = of 3] {};
\node[state, label=right:{2}] (2) [above =of 4] {};
\node[state, label=below:{5}] (5) [right = of 4] {};
\node[state, label=below:{6}] (6) [right = of 5] {};
\node[state, label=below:{7}] (7) [right = of 6] {};
\node[state, label=below:{8}] (8) [right = of 7] {};

\draw (1)--(3)--(4)--(5)--(6)--(7)--(8);
\draw (2)--(4);
			\end{tikzpicture} \\
			$F_4$
			&
			\begin{tikzpicture}[scale = .5, baseline=-0.5ex]
			\node[state, label=below:{1}] (1) {};
			\node[state, label=below:{2}] (2) [right = of 1] {};
			\node[state, label=below:{3}] (3) [right = of 2] {};
			\node[state, label=below:{4}] (4) [right =of 3] {};
			
			\draw (1)--(2)
			(3)--(4);
			\draw[double line] (2)-- node{$>$} (3);
			\end{tikzpicture} 
			&
			$G_2$
			&
			\begin{tikzpicture}[scale =.5, baseline=-0.5ex]

			\node[state, label=below:{1}] (1) {};
			\node[state, label=below:{2}] (2) [right = of 1] {};
			
			\draw[triple line] (1)-- node{$<$} (2);
			\draw (1)--(2);
			
			\end{tikzpicture}
			\\
			\bottomrule
		\end{tabular}
		\captionof{table}{Dynkin diagrams of exceptional Lie groups.} 
		\label{Dynkin}
		\egroup
	\end{table}

\begin{definition} \label{dex}
Let $P_k \subset G$ be the $k$th maximal parabolic subgroup and $\mathcal O(1)$ be the ample generator of the Picard group of $G/P_k$ giving the minimal embedding in the projective space $\mathbb P(V_G(\varpi_k))$, that is, $\mathcal O(1) =\mathcal E_{\varpi_k}$.  
Here, $\varpi_k$ is the $k$th fundamental weight. 
The \emph{dex} of a vector bundle $\mathcal F$ over $G/P_k$ is an integer~$\dex(\mathcal F)$ defined by $\det(\mathcal F) = \mathcal O(1)^{\dex(\mathcal F)}$.
Similarly, the \emph{dex} of an integral $P$-dominant weight $\lambda$ is defined as $\dex(\lambda) = \dex(\mathcal E_{\lambda})$. 
\end{definition}

The following proposition is essentially due to Benedetti in~\cite[Sections~3 and~4]{Ben} but we provide its proof for reader's convenience.
\begin{proposition}
\label{dex of classical types} 
Let $G$ be a semisimple Lie group of rank $r$, and $P_k$ be the $k$th maximal parabolic subgroup.  
\begin{enumerate}
\item[{\rm (1)}] \textup{(}$A$ type\textup{)} For a Grassmannian $\SL(r+1, \mathbb C)/P_k = \Gr(k, r+1)$, and for $\lambda=\sum_{i=1}^r \lambda_i \varpi_i$, we have 
\[
\dex(\lambda) = \Big(\frac{\sum_{j=1}^k \sum_{i=j}^r \lambda_i}{k} - \frac{\sum_{j=k+1}^r \sum_{i=j}^r \lambda_i}{r+1-k} \Big) \rank(\mathcal E_{\lambda}).
\]

\item[{\rm (2)}]  \textup{(}$C$ type\textup{)} For a symplectic Grassmannian $\Sp(2r, \mathbb C)/P_k = \IGr(k,2r)$, and for $\lambda=\sum_{i=1}^r \lambda_i \varpi_i$, we have 
\[
\dex(\lambda) = \frac{\sum_{j=1}^k \sum_{i=j}^r \lambda_i}{k} \cdot \rank(\mathcal E_{\lambda}).
\]

\item[{\rm (3)}]  \textup{(}$B$, $D$ types\textup{)} Two homogeneous varieties $\SO(2r, \mathbb C)/P_r$ and $\SO(2r-1, \mathbb C)/P_{r-1}$ are isomorphic, 
called a \emph{spinor variety} $\mathbb S_r$ of dimension $\frac{r(r-1)}{2}$. 
For a spinor variety $\mathbb S_r = \SO(2r, \mathbb C)/P_r$, 
and for $\lambda=\sum_{i=1}^r \lambda_i \varpi_i$, we have 
\[
\dex(\lambda) = 2 \cdot \frac{\sum_{j=1}^r (\sum_{i=j}^{r-2} \lambda_i +\frac{\lambda_{r-1}+\lambda_{r}}{2})}{r} \cdot \rank(\mathcal E_{\lambda}).
\]
\end{enumerate}
\end{proposition}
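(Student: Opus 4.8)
The plan is to reduce the computation of $\dex$ to a statement about sums of weights, and then to evaluate that sum in explicit coordinates for each of the three families $\Gr(k,r+1)$, $\IGr(k,2r)$, $\mathbb S_r$. Since $\det(\mathcal F_1\oplus\mathcal F_2)=\det\mathcal F_1\otimes\det\mathcal F_2$, the invariant $\dex$ is additive on direct sums, so it suffices to treat an irreducible bundle $\mathcal E_\lambda=G\times_P V(\lambda)^*$. As $\Exterior^{\mathrm{top}}$ commutes with the associated-bundle construction, $\det\mathcal E_\lambda=G\times_P \Exterior^{\mathrm{top}}(V(\lambda)^*)$, and the one-dimensional $P$-module $\Exterior^{\mathrm{top}}(V(\lambda)^*)$ has $\mathfrak h$-weight $-\sigma(\lambda)$, where $\sigma(\lambda):=\sum_\mu(\dim V(\lambda)_\mu)\,\mu$ is the sum of the weights of $V(\lambda)$ with multiplicity. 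By the conventions of Section~\ref{subsec_equiv_vb_on_hom_varieties} this reads $\det\mathcal E_\lambda=\mathcal E_{\sigma(\lambda)}$; comparing with $\det\mathcal E_\lambda=\mathcal L^{\dex(\lambda)}=\mathcal E_{\dex(\lambda)\varpi_k}$ from Definition~\ref{dex}, and using that $G$-equivariant line bundles on $G/P_k$ are classified by the characters of $P_k$, one gets $\sigma(\lambda)=\dex(\lambda)\,\varpi_k$. Thus everything reduces to computing $\sigma(\lambda)$ and reading off the coefficient of $\varpi_k$.

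The observation that makes $\sigma(\lambda)$ computable is that $V(\lambda)$, being an irreducible $P$-module, is an irreducible module over the Levi $L$ (with the unipotent radical acting trivially), and its restriction to $[L,L]$ stays irreducible since the centre of $L$ acts by a scalar. Hence all weights of $V(\lambda)$ have the same restriction to the centre of $\mathfrak l$, while their restrictions to $\mathfrak h\cap[\mathfrak l,\mathfrak l]$ form a multiset invariant under the Weyl group of $[L,L]$ and so sum to $0$; consequently $\sigma(\lambda)=(\rank\mathcal E_\lambda)\cdot c(\lambda)$, where $c(\lambda)$ is the unique weight vanishing on $\mathfrak h\cap[\mathfrak l,\mathfrak l]$ (hence on $\alpha_i^\vee$ for all $i\neq k$, so $c(\lambda)\in\mathbb Q\varpi_k$) that agrees with $\lambda$ on the centre of $\mathfrak l$. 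To make $c(\lambda)$ explicit I would pass to the $\varepsilon$-coordinates of~\cite{FH}: writing $\lambda=\sum_j\mu_j\varepsilon_j$ and identifying, for the $k$-th maximal parabolic, which coordinates get permuted (and, in types $C$ and $D$, sign-changed) by the Weyl group of $[L,L]$, one reads $c(\lambda)$ off as the corresponding averaged coordinate vector and divides by the same average of $\varpi_k$. For $\Gr(k,r+1)$ the Levi is $S(\GL_k\times\GL_{r+1-k})$ and this produces the difference of the averages over the first $k$ and the last $r+1-k$ coordinates; for $\IGr(k,2r)$ the symplectic block of $[L,L]$ contributes nothing to the determinant, so only the average over the first $k$ coordinates survives; for $\mathbb S_r=\SO(2r)/P_r$ the Levi is $\SL_r$, so $c(\lambda)$ is proportional to the average of all $r$ coordinates. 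Substituting $\mu_j=\sum_{i\ge j}\lambda_i$ (with the half-integral modification forced by $\varpi_{r-1}$ and $\varpi_r$ in the spinor case) converts these into the closed formulas in the statement, the factor $\rank\mathcal E_\lambda=\dim V(\lambda)$ being evaluated by the Weyl dimension formula for $[L,L]$: a product of two classical dimensions in type $A$, one in type $C$, and a single $\SL_r$-dimension in the spinor case.

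The step I expect to be the main obstacle is the spinor variety. There one must keep straight that $V(\lambda)$ is the irreducible $L$-module of highest weight $\lambda$, not the restriction of the $\SO(2r)$-module $V_{\SO(2r)}(\lambda)$; one must correctly propagate the half-integral expressions for $\varpi_{r-1},\varpi_r$ in terms of the $\varepsilon_j$ through the central character; and one must use that the minimal ample generator $\mathcal L=\mathcal E_{\varpi_r}$ is only a square root of the Plücker class on the maximal isotropic Grassmannian (equivalently, $\det\mathcal S^*=\mathcal L^2$ for the tautological subbundle $\mathcal S$ of rank $r$). It is precisely these normalizations that account for the overall factor $2$ and the $\tfrac12(\lambda_{r-1}+\lambda_r)$ terms in the third formula, and getting them exactly right is the crux of the argument; types $A$ and $C$ are routine once the reduction is in place.
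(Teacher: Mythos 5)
Your argument follows the same route as the paper's: both reduce $\dex(\lambda)\,\varpi_k$ to the sum of the weights of the irreducible $P$-module $V(\lambda)$, use invariance of that weight multiset under the Weyl group of the Levi to kill the semisimple directions and reduce to the central character times $\rank(\mathcal E_{\lambda})$, and account for the spinor case by the factor of $2$ coming from $\mathcal E_{\varpi_r}$ being a square root of the Pl\"ucker bundle. You spell out the reduction to the sum of weights and the $\varepsilon$-coordinate bookkeeping in more detail than the paper, which largely defers to K\"uchle and Benedetti, but the underlying proof is the same.
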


\begin{proof}
\noindent (1) The first formula is the same as in~\cite[Lemma 3.4]{Kuchle} and~\cite[Lemma 3.8]{Ben}. 
Indeed, it follows from the facts that $\dex(\lambda) \varpi_k$ is the sum of all weights of the irreducible $P$-module with highest weight $\lambda$ and 
the set of weights is invariant under the action of the Weyl group of $P$. 

\smallskip 
\noindent (2) This is explained in~\cite[Remark~3.9 and Section~4.1]{Ben}. 
Since the semisimple part of $P_k$ is isomorphic to $\SL(k, \mathbb C) \times \Sp(2r-2k, \mathbb C)$, 
its Weyl group $W$ is isomorphic to $\mathfrak S_k \times (\mathfrak S_{r-k} \ltimes \mathbb Z_2^{r-k})$, 
where $\mathfrak S_n$ stands for the symmetric group on $n$ letters. 
Since the sum of weights given by the signed symmetric group $\mathfrak S_{r-k} \ltimes \mathbb Z_2^{r-k}$ is equal to zero, we get the result. 

\smallskip
\noindent (3) Let $\OGr(r, 2r)$ be the variety parametrizing isotropic $r$-subspaces in $2r$-dimensional vector space equipped with a nondegenerate symmetric bilinear form.
Then $\OGr(r, 2r)$ has two connected components and these two components are in fact indistinguishable as embedded varieties (for example, see~\cite[Section~23.3]{FH}).
The spinor variety $\mathbb S_r$ is one of two components of $\OGr(r, 2r)$, 
and has the natural embedding $\mathbb S_r \subset \Gr(r, 2r) \subset \mathbb P(\Exterior^r \mathbb C^{2r})$.
However, the restriction of the Pl\"{u}cker line bundle over $\Gr(r, 2r)$ to $\mathbb S_r$ is divisible by two, 
and the square root of the Pl\"{u}cker line bundle gives an embedding into the projectivization of a spin representation of $\SO(2r, \mathbb C)$. 
Because the Weyl group of $P_r$ is isomorphic to $\mathfrak S_r$, 
a similar argument holds as in (1), but we have to multiply by 2 to get the correct formula. 
\end{proof}

Let $Z=Z_{\mathcal F}$ be the zero locus of a general global section of $\mathcal F$ over a rational homogeneous variety $G/P$ of Picard number one. 
If $Z$ has the properties that $\dim Z = 4$ and $K_Z=\mathcal O_Z$, 
by the adjunction formula for $Z \subset G/P$, we have  
\begin{equation}\label{conditions}
\rank (\mathcal F) = \dim(G/P)-4 \quad \text{and} \quad \dex (\mathcal F) = \iota(G/P),
\end{equation}
where $\iota(G/P)$ means the Fano index of a rational homogeneous variety $G/P$. 
Note that all rational homogeneous varieties $G/P$ are Fano varieties (for example, see Sections II.4.2--II.4.4 of \cite{Jantzen}). 

\begin{lemma} \label{ratio} 
Let $G/P$ be a homogeneous variety of Picard number one. 
Let $\mathcal F = \bigoplus_{i=1}^m \, \mathcal F_i$ be the direct sum of irreducible equivariant vector bundles over $G/P$.
If the inequalities 
\[
\frac{\dex(\mathcal F_i)}{\rank(\mathcal F_i)} > \frac{\iota(G/P)}{\dim(G/P)-4}
\]
hold for all $i$, 
then there is no fourfold $Z$ such that it is the zero locus of a general global section of $\mathcal F$ and has trivial canonical bundle.
\end{lemma}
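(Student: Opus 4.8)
The plan is to argue by contradiction, using only the numerical constraints that a fourfold $Z$ with trivial canonical bundle imposes, namely the relations in~\eqref{conditions}, together with the additivity of $\rank$ and $\dex$ under direct sums. So, first I would suppose that such a $Z$ does exist. By the adjunction formula for $Z \subset G/P$, the relations~\eqref{conditions} hold: $\rank(\mathcal F) = \dim(G/P)-4$ and $\dex(\mathcal F) = \iota(G/P)$. Since $G/P$ is a Fano variety, $\iota(G/P) > 0$, and since $Z$ is cut out inside $G/P$ by a section of the nonzero bundle $\mathcal F$ and has codimension $4$, we have $\dim(G/P)-4 = \rank(\mathcal F) > 0$. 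Hence the quantity $c \coloneqq \iota(G/P)/(\dim(G/P)-4)$ on the right-hand side of the displayed hypothesis is a well-defined positive rational number, and all the ratios $\dex(\mathcal F_i)/\rank(\mathcal F_i)$ are taken between positive integers (each $\rank(\mathcal F_i) \geq 1$).

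Next I would record the elementary additivity statements. For a direct sum $\mathcal F = \bigoplus_{i=1}^m \mathcal F_i$ one trivially has $\rank(\mathcal F) = \sum_{i=1}^m \rank(\mathcal F_i)$, and since $\det(\mathcal F) \cong \bigotimes_{i=1}^m \det(\mathcal F_i)$, writing $\det(\mathcal F_i) = \mathcal L^{\dex(\mathcal F_i)}$ as in Definition~\ref{dex} gives $\det(\mathcal F) = \mathcal L^{\sum_i \dex(\mathcal F_i)}$, so that $\dex(\mathcal F) = \sum_{i=1}^m \dex(\mathcal F_i)$.

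Finally I would combine everything. The hypothesis $\dex(\mathcal F_i)/\rank(\mathcal F_i) > c$ together with $\rank(\mathcal F_i) > 0$ yields $\dex(\mathcal F_i) > c\,\rank(\mathcal F_i)$ for each $i$; summing over $i$ and using the two additivity identities gives $\dex(\mathcal F) > c\,\rank(\mathcal F)$. Substituting the values forced by~\eqref{conditions} produces $\iota(G/P) > c\,(\dim(G/P)-4) = \iota(G/P)$, a contradiction, so no such $Z$ exists. There is really no hard step here: the only point deserving any attention is to confirm that the ratios compare genuinely positive quantities so that the mediant-type inequality applies, which is exactly what the Fano property of $G/P$ and the fact that $\mathcal F$ is a nonzero bundle of corank $4$ guarantee.
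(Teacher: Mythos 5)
Your proposal is correct and follows essentially the same route as the paper: argue by contradiction via the adjunction relations~\eqref{conditions}, use additivity of $\rank$ and $\dex$ over direct summands, and derive $\dex(\mathcal F)/\rank(\mathcal F) > \iota(G/P)/(\dim(G/P)-4)$, contradicting~\eqref{conditions}. Your extra remarks on positivity of the quantities involved only make explicit what the paper leaves implicit in its mediant-type comparison.
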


\begin{proof}
Assume that there is a fourfold $Z$ satisfying the given conditions. 
Since $\mathcal F$ is the direct sum of $\mathcal F_i$'s, we have $\rank (\mathcal F) = \sum_i \rank (\mathcal F_i)$ and $\dex (\mathcal F) = \sum_i \dex (\mathcal F_i)$. 
Then 
\[
\frac{\dex(\mathcal F)}{\rank(\mathcal F)} = \frac{\sum_i \dex(\mathcal F_i)}{\sum_i \rank(\mathcal F_i)} > \frac{\iota(G/P)}{\dim(G/P)-4}
\]
by the assumption. 
However, it contradicts to the conditions in~\eqref{conditions}. 
\end{proof}

From the same argument used in the proof of Lemma~\ref{ratio}, we have the following consequence directly.

\begin{proposition}\label{linear section} 
Let $G/P$ be a homogeneous variety of Picard number one.
Let $\mathcal F = \bigoplus_{i=1}^m \, \mathcal F_i$ be the direct sum of line bundles over $G/P$. 
Suppose that $\dim(G/P) = \iota(G/P) + 4$ and 
$\dex(\mathcal F_i) \geq 1$ for all irreducible summands $\mathcal F_i$ of~$\mathcal F$. 
If $Z \subset G/P$ is a fourfold with trivial canonical bundle which is the zero locus of a general global section of $\mathcal F$, 
then $Z$ is a general linear section of $G/P$, that is, $\mathcal F$ is isomorphic to $\mathcal O(1)^{\oplus (\dim(G/P)-4)}$. 
\end{proposition}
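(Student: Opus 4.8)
The plan is to convert the two numerical hypotheses into a rigid equality on each irreducible summand of $\mathcal F$, and then to identify those summands one at a time.

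\emph{Step 1: from the hypotheses to $\dex(\mathcal F_i)=\rank(\mathcal F_i)$ for all $i$.} Since $Z$ is a fourfold with trivial canonical bundle, the adjunction identities in~\eqref{conditions} give $\rank(\mathcal F)=\dim(G/P)-4$ and $\dex(\mathcal F)=\iota(G/P)$; combined with $\dim(G/P)=\iota(G/P)+4$ this forces $\rank(\mathcal F)=\dex(\mathcal F)$. Writing $\mathcal F=\bigoplus_{i=1}^{m}\mathcal F_i$ and using additivity of rank and of $\dex$, we get $\sum_i \rank(\mathcal F_i)=\sum_i \dex(\mathcal F_i)$; since each term satisfies $\dex(\mathcal F_i)\ge \rank(\mathcal F_i)$ by hypothesis, equality must hold term by term, so $\dex(\mathcal F_i)=\rank(\mathcal F_i)$ for every $i$. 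This just repeats the computation in the proof of Lemma~\ref{ratio}, which is why the statement ``follows directly''.

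\emph{Step 2: each $\mathcal F_i$ equals $\mathcal O(1)$.} Write $P=P_k$, let $L$ be the reductive part of a Levi decomposition, and write $\mathcal F_i=\mathcal E_{\lambda_i}$, so that $V(\lambda_i)$ is irreducible as an $L$-module. Global generation of $\mathcal E_{\lambda_i}$ forces $H^0(G/P,\mathcal E_{\lambda_i})\ne 0$, hence $\lambda_i$ is $G$-dominant by the Borel--Weil--Bott Theorem~\ref{BBW}. Next, recall (as in the proof of Proposition~\ref{dex of classical types}) that $\dex(\mathcal E_{\lambda_i})\,\varpi_k$ is the sum of the weights of $V(\lambda_i)$. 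Decomposing each weight into its component along the root lattice of $L$ and its component along the central direction $\varpi_k$: the $L$-components sum to zero because the weight multiset of an irreducible module over the semisimple part of $L$ is Weyl-invariant, while the central component is one and the same scalar $c_{\lambda_i}$ for every weight of $V(\lambda_i)$, normalized so that $\mathcal O(1)=\mathcal E_{\varpi_k}$ has $c_{\varpi_k}=1$. Hence $\dex(\mathcal E_{\lambda_i})=c_{\lambda_i}\cdot\rank(\mathcal E_{\lambda_i})$, and Step 1 gives $c_{\lambda_i}=1$. Because $\mathcal F_i$ is a direct summand of $\mathcal F$, its rank is at most $\dim(G/P)-4$; it then remains to verify, for each of the finitely many exceptional $G/P_k$ with $\dim(G/P_k)=\iota(G/P_k)+4$, that a $G$-dominant weight $\lambda$ with $c_\lambda=1$ and $\rank(\mathcal E_\lambda)\le \dim(G/P)-4$ must equal $\varpi_k$ — this is where one uses the explicit description of $L$ and the dimension formula for its irreducible modules. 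Granting this, $\mathcal F_i\cong\mathcal O(1)$ for all $i$, and since $\rank(\mathcal F)=\dim(G/P)-4$ we conclude $\mathcal F\cong\mathcal O(1)^{\oplus(\dim(G/P)-4)}$, that is, $Z$ is a general codimension-$(\dim(G/P)-4)$ linear section of $G/P\hookrightarrow\mathbb P(V_G(\varpi_k))$.

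\emph{Expected main obstacle.} Step 1 and the final assembly are purely formal; the real content is the verification highlighted at the end of Step 2. The conceptual input — that $\dex(\mathcal E_\lambda)/\rank(\mathcal E_\lambda)$ equals the central character $c_\lambda$ — reduces matters to the arithmetic condition $c_\lambda=1$ on $G$-dominant weights, but this condition alone does not single out $\varpi_k$ (there are dominant $\lambda\ne\varpi_k$ with $c_\lambda=1$); one really has to bring in the rank bound $\rank(\mathcal E_\lambda)\le\dim(G/P)-4$, i.e.\ lower bounds for $\dim V(\lambda)$, to discard the remaining candidates. I expect this case check — small once one restricts to the two or three relevant homogeneous varieties — to be the only genuinely case-dependent part of the argument.
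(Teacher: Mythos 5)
Your Step~1 is exactly the paper's entire proof: the authors justify the proposition with the single remark that it follows ``from the same argument used in the proof of Lemma~\ref{ratio}'', which is precisely your observation that \eqref{conditions} gives $\dex(\mathcal F)=\rank(\mathcal F)$, and that this together with $\dex(\mathcal F_i)\geq\rank(\mathcal F_i)$ forces $\dex(\mathcal F_i)=\rank(\mathcal F_i)$ for every summand. Your identity $\dex(\mathcal E_{\lambda})=c_{\lambda}\cdot\rank(\mathcal E_{\lambda})$, with $c_{\lambda}$ the scalar by which the center of the Levi acts, is also correct and is essentially how the paper proves Proposition~\ref{dex of classical types}(1).

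The problem is the verification you defer at the end of Step~2, and it is not one you can close: the claim that a $G$-dominant weight $\lambda$ with $c_{\lambda}=1$ and $\rank(\mathcal E_{\lambda})\leq\dim(G/P)-4$ must equal $\varpi_k$ is false for the relevant varieties. Take $G/P=F_4/P_4$, which satisfies $\dim=15=\iota+4=11+4$. The bundle $\mathcal E_{\varpi_1}$ there has rank $7\leq 11$ and $\dex=7$ (this is computed in Proposition~\ref{F4/P4}), so $c_{\varpi_1}=1$ while $\varpi_1\neq\varpi_4$. Worse, $\mathcal F=\mathcal E_{\varpi_1}\oplus\mathcal O(1)^{\oplus 4}$ satisfies every hypothesis of the proposition --- all summands have ratio exactly $1$ --- and its general zero locus is the Calabi--Yau fourfold No.~7 of Table~\ref{table1}, so the proposition as literally printed is contradicted by the paper's own classification; no proof of it can exist. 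What is true, and what the paper actually uses (in Proposition~\ref{E6/P1}, only after the unique non-line-bundle candidate $\mathcal E_{\varpi_6}$ has been discarded by the separate geometric argument that its general sections vanish nowhere), is the special case in which every summand is already known to be a line bundle $\mathcal O(a_i)$ with $a_i\geq 1$: then $\sum_i a_i=\iota=\dim(G/P)-4$ equals the number of summands, forcing $a_i=1$ for all $i$. So the correct fix is not a longer case check but an added hypothesis --- e.g.\ that $\mathcal F$ is a direct sum of line bundles, or that every non-line-bundle irreducible summand has ratio strictly greater than $1$. Your instinct that the statement ``does not follow directly'' was right; you should push it one step further and conclude that, as stated, it does not follow at all.
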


Now, we will give a classification of possible pairs $(G/P, \mathcal F)$ such that the zero locus $Z$ of a general global section of $\mathcal F$ is a fourfold with trivial canonical bundle
when $G/P$ is an exceptional homogeneous variety of Picard number one. 
This is done by a case-by-case analysis using the ratio dex/rank of irreducible equivariant vector bundles over $G/P$. 

\begin{proposition}\label{prop_3.5}
Let $G/P$ be an exceptional homogeneous variety of Picard number one.
Let $\mathcal F = \bigoplus_{i=1}^m \, \mathcal F_i$ be the direct sum of irreducible equivariant vector bundles over $G/P$. 
Let $Z \subset G/P$ be the zero locus of a general global section of $\mathcal F$. 
If $Z$ is a fourfold with trivial canonical bundle, 
then the only possible cases for $G/P$ are $E_6/P_1, E_6/P_2, E_6/P_3, E_7/P_1, F_4/P_1, F_4/P_4$, $G_2/P_1$, and $G_2/P_2$ up to natural identifications.  
\end{proposition}

\begin{proof}
It is well-known that dimensions and Fano indices of rational homogeneous varieties $G/P$ are computed from the root information 
(for example, see~\cite{Snow}). 

\smallskip 
\noindent (1) Suppose that $G$ is of type $E_6$.
Note that the dual Cayley plane $E_6/P_6 \subset \mathbb P^{26}$ is projectively equivalent to $E_6/P_1\subset \mathbb P^{26}$ 
because the highest weight $E_6$-module $V_{E_6}(\varpi_6)$ is dual to $V_{E_6}(\varpi_1)$. 
Similarly, $E_6/P_3$ is projectively equivalent to $E_6/P_5$. 
Accordingly, it is enough to consider $P = P_4$.

The rational homogeneous variety $E_6/P_4 \subset \mathbb P^{2924}$ has dimension 29 and Fano index 7. 
Since the semisimple part of $P_4$ is isomorphic to $\SL(3, \mathbb C) \times \SL(2, \mathbb C) \times \SL(3, \mathbb C)$, 
we can easily compute $\dex(\mathcal E_{\lambda})$ by Proposition~\ref{dex of classical types}(1). 
For example, $\dex (\mathcal E_{\varpi_3}) = \frac{2}{3} \times 3 = 2$. 
Since $\mathcal E_{\varpi_2+\varpi_3} = \mathcal E_{\varpi_2} \otimes \mathcal E_{\varpi_3}$, 
$\dex (\mathcal E_{\varpi_2+\varpi_3}) = \dex (\mathcal E_{\varpi_2}) \cdot \rank (\mathcal E_{\varpi_3}) + \dex (\mathcal E_{\varpi_3}) \cdot \rank (\mathcal E_{\varpi_2})
= 1 \times 3 + 2 \times 2 = 7$; see Table~\ref{table_dex_E6_P4}. 
\begin{table}[H]
\begin{tabular}{c c c c c c c c}
		\toprule
		$\lambda$ & $\varpi_1$  & $\varpi_2$  & $\varpi_3$  & $\varpi_5$  & $\varpi_6$ & $\varpi_1+\varpi_2$ & $\varpi_2+\varpi_3$
\\
		\midrule 
	$\rank (\mathcal E_{\lambda})$	&	$3$	&	$2$  &	$3$ 	&	$3$ 	& 	$3$	& 	$6$	& $6$
\\
	$\dex (\mathcal E_{\lambda})$	&	$1$	&	$1$  &	$2$ 	&	$2$ 	& 	$1$ 	& 	$5$	& 	$7$
\\
		\bottomrule
\end{tabular}
\caption{The irreducible equivariant vector bundles over $E_6/P_4$ with dex $\leq 7$.} \label{table_dex_E6_P4}
\end{table}
In general, for two vector bundles $\mathcal F_{1}$ and $\mathcal F_{2}$, the following holds
\[
\frac{\dex (\mathcal F_{1} \otimes \mathcal F_{2})}{\rank (\mathcal F_{1} \otimes \mathcal F_{2})} 
= \frac{\dex (\mathcal F_{1})}{\rank (\mathcal F_{1})} + \frac{\dex (\mathcal F_{2})}{\rank (\mathcal F_{2})}. 
\] 
Thus, we deduce that $\frac{\dex(\mathcal F_i)}{\rank(\mathcal F_i)} \geq \frac{1}{3}$ for any irreducible equivariant vector bundle $\mathcal F_{i}$ over $E_6/P_4$. 
Since 
\[
\frac{\iota(E_6/P_4)}{\dim(E_6/P_4)-4} = \frac{7}{25} < \frac{1}{3},
\] 
we cannot get a direct sum of irreducible equivariant vector bundles of which a general global section gives a fourfold with trivial canonical bundle by Lemma~\ref{ratio}.

\smallskip 
\noindent (2) Suppose that $G$ is of type $E_7$.
\begin{enumerate}[label=(\alph*)]
\item The rational homogeneous variety $E_7/P_2 \subset \mathbb P^{911}$ has dimension $42$ and Fano index $14$. 
Note that irreducible equivariant vector bundles with rank $\leq 42-4=38$ are given in Table~\ref{table_dex_E7_P2}. 
	\begin{table}[H]
\begin{tabular}{c c c c c c c c c}
		\toprule
		$\lambda$ & $\varpi_1$  & $\varpi_3$  & $\varpi_4$  & $\varpi_5$  & $\varpi_6$ & $\varpi_7$ & $2\varpi_1$ & $2\varpi_7$
\\
		\midrule 
	$\rank (\mathcal E_{\lambda})$	&	$7$	&	$21$  &	$35$ 	&	$35$ 	& 	$21$	& 	$7$	& 	$28$	& 	$28$
\\
	$\dex (\mathcal E_{\lambda})$	&	$4$	&	$24$  &	$60$ 	&	$45$ 	& 	$18$ 	& 	$3$	& 	$32$	& 	$24$
\\
		\bottomrule
\end{tabular}
\caption{The irreducible equivariant vector bundles over $E_7/P_2$ with rank $\leq 38$.} \label{table_dex_E7_P2}
\end{table}
Since 
\[
\frac{\dex(\mathcal F_i)}{\rank(\mathcal F_i)} \geq \frac{3}{7} > \frac{14}{38} = \frac{\iota(E_7/P_2)}{\dim(E_7/P_2)-4} \qquad \text{ for all }i,
\] 
there is no $Z$ with trivial canonical bundle by Lemma \ref{ratio}. 

\item The rational homogeneous variety $E_7/P_3 \subset \mathbb P^{8644}$ has dimension $47$ and Fano index $11$. 
Since the semisimple part of $P_3$ is isomorphic to $\SL(2,\C) \times \SL(6,\C)$, by Proposition~\ref{dex of classical types}, 
we know 
\[
\frac{\dex(\mathcal F_i)}{\rank(\mathcal F_i)} \geq \frac{1}{3} > \frac{11}{43} = \frac{\iota(E_7/P_3)}{\dim(E_7/P_3)-4} \qquad\text{ for all } i.
\]

\item The rational homogeneous variety $E_7/P_4 \subset \mathbb P^{365749}$ has dimension $53$ and Fano index $8$. 
Since the semisimple part of $P_4$ is isomorphic to $\SL(3,\C) \times \SL(2,\C) \times \SL(4,\C)$, by Proposition~\ref{dex of classical types},  
we know 
\[
\frac{\dex(\mathcal F_i)}{\rank(\mathcal F_i)} \geq \frac{1}{4} > \frac{8}{49} = \frac{\iota(E_7/P_4)}{\dim(E_7/P_4)-4} \qquad \text{ for all } i. 
\]

\item The rational homogeneous variety $E_7/P_5 \subset \mathbb P^{27663}$ has dimension $50$ and Fano index $10$. 
Since the semisimple part of $P_5$ is isomorphic to $\SL(5,\C) \times \SL(3,\C)$, by Proposition~\ref{dex of classical types}, 
we know 
\[
\frac{\dex(\mathcal F_i)}{\rank(\mathcal F_i)} \geq \frac{1}{3} > \frac{10}{46} = \frac{\iota(E_7/P_5)}{\dim(E_7/P_5)-4} \qquad \text{ for all } i. 
\]

\item The rational homogeneous variety $E_7/P_6 \subset \mathbb P^{1538}$ has dimension $42$ and Fano index $13$. 
From the direct computations (cf. Proposition~\ref{E6/P1}), 
\[
\frac{\dex(\mathcal F_i)}{\rank(\mathcal F_i)} \geq \frac{1}{2} > \frac{13}{38} = \frac{\iota(E_7/P_6)}{\dim(E_7/P_6)-4} \qquad \text{ for all } i.
\]

\item The rational homogeneous variety $E_7/P_7 \subset \mathbb P^{55}$ has dimension $27$ and Fano index $18$. 
The semisimple part of the maximal parabolic subgroup $P_7 \subset E_7$ is isomorphic to $E_6$. 
Since the fundamental representation $V_{E_6}(\varpi_1) \cong \mathbb C^{27}$ is a nontrivial irreducible representation of $E_6$ with minimal dimension, 
an irreducible representation of $E_6$ having dimension $\leq 27-4=23$ is only the trivial representation. 
Consequently, $\mathcal F$ is isomorphic to a direct sum of line bundles 
so that 
\[
\frac{\dex(\mathcal F_i)}{\rank(\mathcal F_i)} = \dex(\mathcal F_i) \geq 1 > \frac{18}{23} = \frac{\iota(E_7/P_7)}{\dim(E_7/P_7)-4} \qquad \text{ for all }i.
\]
\end{enumerate}

\smallskip 
\noindent (3) Suppose that $G$ is of type $E_8$.
\begin{enumerate}[label=(\alph*)]
\item The rational homogeneous variety $E_8/P_1 \subset \mathbb P^{3874}$ has dimension $78$ and Fano index $23$. 
Note that irreducible equivariant vector bundles with rank $\leq 78-4=74$ are  $\mathcal E_{\varpi_2}$, $\mathcal E_{\varpi_3}$ and $\mathcal E_{\varpi_8}$.
From the direct computations, 
\[
\frac{\dex(\mathcal F_i)}{\rank(\mathcal F_i)} \geq \frac{1}{2} > \frac{23}{74} = \frac{\iota(E_8/P_1)}{\dim(E_8/P_1)-4} \qquad \text{ for all }i.
\] 

\item The rational homogeneous variety $E_8/P_2 \subset \mathbb P^{147249}$ has dimension $92$ and Fano index $17$. 
From the direct computations, 
\[
\frac{\dex(\mathcal F_i)}{\rank(\mathcal F_i)} \geq \frac{3}{8} > \frac{17}{88} = \frac{\iota(E_8/P_2)}{\dim(E_8/P_2)-4} \qquad \text{ for all }i.
\]

\item The rational homogeneous variety $E_8/P_3 \subset \mathbb P^{6695999}$ has dimension $98$ and Fano index $13$. 
Since the semisimple part of $P_3$ is isomorphic to $\SL(2,\C) \times \SL(7,\C)$, by Proposition~\ref{dex of classical types}, 
we know 
\[
\frac{\dex(\mathcal F_i)}{\rank(\mathcal F_i)} \geq \frac{2}{7} > \frac{13}{94} = \frac{\iota(E_8/P_3)}{\dim(E_8/P_3)-4} \qquad \text{ for all }i.
\]

\item The rational homogeneous variety $E_8/P_4 \subset \mathbb P^{6899079263}$ has dimension $106$ and Fano index $9$. 
Since the semisimple part of $P_4$ is isomorphic to $\SL(3,\C) \times \SL(2,\C) \times \SL(5,\C)$, by Proposition~\ref{dex of classical types}, 
we know 
\[
\frac{\dex(\mathcal F_i)}{\rank(\mathcal F_i)} \geq \frac{1}{5} > \frac{9}{102} = \frac{\iota(E_8/P_4)}{\dim(E_8/P_4)-4} \qquad \text{ for all }i.
\] 

\item The rational homogeneous variety $E_8/P_5 \subset \mathbb P^{146325269}$ has dimension $104$ and Fano index $11$. 
Since the semisimple part of $P_5$ is isomorphic to $\SL(5,\C) \times \SL(4,\C)$, by Proposition~\ref{dex of classical types}, 
we know 
\[
\frac{\dex(\mathcal F_i)}{\rank(\mathcal F_i)} \geq \frac{1}{4} > \frac{11}{100} = \frac{\iota(E_8/P_5)}{\dim(E_8/P_5)-4} \qquad \text{ for all }i.
\] 

\item The rational homogeneous variety $E_8/P_6 \subset \mathbb P^{2450239}$ has dimension $97$ and Fano index $14$. 
Since the semisimple part of $P_6$ is isomorphic to $\Spin(10, \C) \times \SL(3, \C)$, from Proposition~\ref{dex of classical types}  
and the direct computations, 
\[
\frac{\dex(\mathcal F_i)}{\rank(\mathcal F_i)} \geq \frac{1}{3} > \frac{14}{93} = \frac{\iota(E_8/P_6)}{\dim(E_8/P_6)-4} \qquad \text{ for all }i.
\] 

\item The rational homogeneous variety $E_8/P_7 \subset \mathbb P^{30379}$ has dimension $83$ and Fano index $19$. 
From the direct computations, 
\[
\frac{\dex(\mathcal F_i)}{\rank(\mathcal F_i)} > \frac{19}{79} = \frac{\iota(E_8/P_7)}{\dim(E_8/P_7)-4}
\qquad \text{ for all }i.
\] 

\item The rational homogeneous variety $E_8/P_8 \subset \mathbb P^{247}$ has dimension $57$ and Fano index $29$. 
The semisimple part of the maximal parabolic subgroup $P_8 \subset E_8$ is isomorphic to $E_7$. 
Since the fundamental representation $V_{E_7}(\varpi_1) \cong \mathbb C^{56}$ is a nontrivial irreducible representation of $E_7$ with minimal dimension,  
an irreducible representation of $E_7$ having dimension $\leq 57-4=53$ is only the trivial representation. 
Consequently, $\mathcal F$ is isomorphic to a direct sum of line bundles. 
\end{enumerate}

\smallskip
\noindent (4) Suppose that $G$ is of type $F_4$.
\begin{enumerate}[label=(\alph*)]
\item The rational homogeneous variety $F_4/P_2 \subset \mathbb P^{1273}$ has dimension $20$ and Fano index $5$. 
Since the semisimple part of $P_2$ is isomorphic to $\SL(2,\C) \times \SL(3,\C)$, by Proposition~\ref{dex of classical types}, 
we know 
\[
\frac{\dex(\mathcal F_i)}{\rank(\mathcal F_i)} \geq \frac{1}{3} > \frac{5}{16} = \frac{\iota(F_4/P_2)}{\dim(F_4/P_2)-4} \qquad \text{ for all }i.
\] 

\item The rational homogeneous variety $F_4/P_3 \subset \mathbb P^{272}$ has dimension $20$ and Fano index $7$.
Since the semisimple part of $P_3$ is isomorphic to $\SL(3,\C) \times \SL(2,\C)$, by Proposition~\ref{dex of classical types},  
we know 
\[
\frac{\dex(\mathcal F_i)}{\rank(\mathcal F_i)} \geq \frac{1}{2} > \frac{7}{16} = \frac{\iota(F_4/P_3)}{\dim(F_4/P_3)-4} \qquad \text{ for all }i.
\] 
\end{enumerate}
Therefore, the result follows. 
\end{proof}

The classification in Table~\ref{table1} is a direct consequence of the following propositions. 

\begin{proposition} \label{G2/P1}
Let $\mathcal F$ be a completely reducible, globally generated, equivariant vector bundle over $G_2/P$, 
and $Z$ the zero locus of a general global section of $\mathcal F$.
\begin{enumerate}
\item 
If $Z \subset G_2/P_1$ is a fourfold with trivial canonical bundle, 
then $\mathcal F \cong \mathcal O(5)$. 
\item
If $Z \subset G_2/P_2$ is a fourfold with trivial canonical bundle, 
then $\mathcal F \cong \mathcal O(3)$. 
\end{enumerate}
\end{proposition}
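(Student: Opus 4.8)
The plan is to show that the numerical constraints in~\eqref{conditions} leave essentially no freedom, forcing $\mathcal F$ to be a single line bundle of prescribed degree.

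First I would use the dimension count. Both $G_2/P_1$ and $G_2/P_2$ are five-dimensional, so if $Z$ is a fourfold with trivial canonical bundle, the first identity in~\eqref{conditions} gives $\rank(\mathcal F) = 5 - 4 = 1$. Since $\mathcal F$ is completely reducible, it is a direct sum of irreducible equivariant bundles; having rank one it must consist of a single summand, so $\mathcal F = \mathcal E_{\lambda}$ is irreducible with $\dim V(\lambda) = 1$.

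Next I would identify all rank-one equivariant bundles over $G_2/P_k$. This variety has Picard number one, with Picard group generated by the very ample line bundle $\mathcal L = \mathcal O(1) = \mathcal E_{\varpi_k}$ of Definition~\ref{dex}. Since $G_2$ is simply connected, every line bundle on $G_2/P_k$ admits a unique $G_2$-linearization, so every rank-one equivariant bundle equals $\mathcal O(d) = \mathcal E_{d\varpi_k}$ for some $d \in \mathbb Z$ (with $d \geq 0$ by global generation, though this will follow anyway). Then I would invoke the second identity of~\eqref{conditions}: $\dex(\mathcal F) = \iota(G_2/P_k)$. Because $\det\mathcal O(d) = \mathcal O(d) = \mathcal L^{d}$, Definition~\ref{dex} gives $\dex(\mathcal O(d)) = d$; hence $d = \iota(G_2/P_k)$. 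It remains only to compute the two Fano indices from the root data (see, e.g.,~\cite{Snow}), which yields $\iota(G_2/P_1) = 5$ and $\iota(G_2/P_2) = 3$. Therefore $\mathcal F \cong \mathcal O(5)$ in case~(1) and $\mathcal F \cong \mathcal O(3)$ in case~(2).

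I do not anticipate any real obstacle: once the two structural facts are in place --- that a completely reducible bundle of rank one is irreducible, and that rank-one equivariant bundles on a homogeneous variety of Picard number one are precisely the powers $\mathcal O(d)$ --- the argument is pure bookkeeping. The only points needing a modicum of care are the uniqueness of the $G_2$-linearization and the correct values of $\iota(G_2/P_1)$ and $\iota(G_2/P_2)$. As a consistency check one can also verify through the adjunction formula that $d = 5$ on $G_2/P_1$ and $d = 3$ on $G_2/P_2$ indeed give $K_Z = \mathcal O_Z$, so both cases are genuinely realized, although the proposition only asserts the forward implication.
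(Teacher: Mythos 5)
Your proposal is correct and follows essentially the same route as the paper: both arguments reduce to the observation that $\dim(G_2/P_k)=5$ forces $\rank(\mathcal F)=1$ via~\eqref{conditions}, so $\mathcal F$ is a line bundle, and the condition $\dex(\mathcal F)=\iota(G_2/P_k)$ then pins down $\mathcal O(5)$ and $\mathcal O(3)$ from the Fano indices $5$ and $3$. The paper states this more tersely, while you spell out the intermediate steps (irreducibility of a rank-one completely reducible bundle, identification of equivariant line bundles with powers of the ample generator), but the substance is identical.
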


\begin{proof}
Since the rational homogeneous variety $G_2/P_1 \subset \mathbb P(V_{G_2}(\varpi_1))=\mathbb P^{6}$ has dimension $5$ and Fano index~$5$, 
$\mathcal F$ is a line bundle $\mathcal O(5)$ of degree $5$. 
Similarly, the rational homogeneous variety $G_2/P_2 \subset \mathbb P(V_{G_2}(\varpi_2))=\mathbb P^{13}$ has dimension $5$ and Fano index~$3$. 
Hence, in this case $\mathcal F$ is a line bundle $\mathcal O(3)$ of degree 3. 
\end{proof}

\begin{remark}\label{rmk_Z_for_G2}
It is well-known that the rational homogeneous variety $G_2/P_1$ is isomorphic to a $5$-dimensional quadric $Q^5$ in $\mathbb{P}^6$ (e.g.,~\cite[p.~391]{FH}). 
Thus, the fourfold $Z \subset G_2/P_1$ in Proposition~\ref{G2/P1}(1) is the complete intersection of a smooth quadric hypersurface and a general quintic hypersurface in $\mathbb{P}^6$. 

Moreover, since the second fundamental representation $V_{G_2}(\varpi_2)$ is the adjoint representation $\mathfrak g_2$, 
$G_2/P_2$ is the adjoint variety of $G_2$. 
Thus, the fourfold $Z \subset G_2/P_2$ in Proposition~\ref{G2/P1}(2) is the complete intersection of the adjoint variety of $G_2$ and a general cubic hypersurface in $\mathbb{P}^{13}$. 
\end{remark}

The argument in Proposition~\ref{G2/P1} implies that 
a general global section of $\mathcal O(1) \oplus \mathcal O(4)$ or $\mathcal O(2) \oplus \mathcal O(3)$ over $G_2/P_1$ gives Calabi--Yau threefolds.  
As the equivariant bundle $\mathcal E_{\varpi_2}$ over $G_2/P_1$ has rank 2 and dex 3, 
we have $\dex(\mathcal E_{\varpi_2} \otimes \mathcal O(1)) = 3 \times 1 + 2 \times 1 = 5$; 
hence the zero locus of a general global section of $\mathcal E_{\varpi_2} \otimes \mathcal O(1)$ is a Calabi--Yau threefold. 
Similarly, a general global section of $\mathcal O(1) \oplus \mathcal O(2)$ 
or $\mathcal E_{\varpi_1} \otimes \mathcal O(1)$
over $G_2/P_2$ gives a Calabi--Yau threefold. 
See Table~\ref{table2}. 

\begin{proposition} \label{E6/P1}
Let $\mathcal F$ be a completely reducible, globally generated, equivariant vector bundle over the Cayley plane $E_6/P_1$. 
If $Z \subset E_6/P_1$ is a fourfold with trivial canonical bundle which is the zero locus of a general global section of $\mathcal F$, 
then $\mathcal F$ is isomorphic to $\mathcal O(1)^{\oplus 12}$. 
\end{proposition}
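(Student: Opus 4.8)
The strategy is to reduce to Proposition~\ref{linear section}, and to dispose separately of the single irreducible bundle on $E_6/P_1$ for which the hypothesis of that proposition fails. The Cayley plane $E_6/P_1\subset\mathbb P^{26}$ has $\dim=16$ and Fano index $\iota=12$, so $\dim(E_6/P_1)=\iota(E_6/P_1)+4$ and the constraints \eqref{conditions} read $\rank\mathcal F=12$, $\dex\mathcal F=12$. Deleting $\alpha_1$ from the $E_6$ diagram, the semisimple part of $P_1$ is of type $D_5$, and the only irreducible $\mathfrak{so}_{10}$-modules of dimension $\le 12$ are the trivial one and the $10$-dimensional vector module. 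Hence every irreducible, globally generated, equivariant summand $\mathcal F_i$ of $\mathcal F$ is either a line bundle $\mathcal O(a)$ with $a\ge 1$ (a trivial summand would make $Z$ empty) or a twist $\mathcal E_{\varpi_6}\otimes\mathcal O(a)=\mathcal E_{\varpi_6+a\varpi_1}$, $a\ge 0$, of the rank-$10$ bundle $\mathcal E_{\varpi_6}$. Evaluating the sum-of-weights description of $\dex$ used in the proof of Proposition~\ref{dex of classical types}(1) (the weights of the vector $\mathfrak{so}_{10}$-module, read inside the $E_6$ weight lattice, add up to $5\varpi_1$) gives $\dex(\mathcal E_{\varpi_6})=5$, so $\dex(\mathcal E_{\varpi_6}\otimes\mathcal O(a))=5+10a$ while $\dex(\mathcal O(a))=a$. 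Thus every summand other than $\mathcal E_{\varpi_6}$ itself satisfies $\dex/\rank\ge 1$, and if $\mathcal E_{\varpi_6}$ does not occur as a summand then Proposition~\ref{linear section} applies verbatim and forces $\mathcal F\cong\mathcal O(1)^{\oplus 12}$.

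It remains to exclude the case in which $\mathcal E_{\varpi_6}$, or one of its twists, occurs as a summand. Since $\rank\mathcal E_{\varpi_6}=10$ and $\rank\mathcal F=12$ there is exactly one such summand and exactly two line-bundle summands, and $\dex(\mathcal E_{\varpi_6}\otimes\mathcal O(a))=5+10a\le 12$ forces $a=0$; so $\mathcal F\cong\mathcal E_{\varpi_6}\oplus\mathcal O(b_1)\oplus\mathcal O(b_2)$ with $b_1+b_2=7$, $b_1,b_2\ge 1$. In particular $Z$ is contained in the zero locus $Z_0$ of a general global section of $\mathcal E_{\varpi_6}$, so it is enough to prove that $Z_0=\varnothing$, equivalently $c_{10}(\mathcal E_{\varpi_6})=0$ in $H^{20}(E_6/P_1,\mathbb Z)$. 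I would do this by identifying $\mathcal E_{\varpi_6}$ with a twist of the normal bundle of the Cayley plane: since $E_6/P_1$ is minuscule, $N_{E_6/P_1/\mathbb P^{26}}$ is an irreducible homogeneous bundle of rank $10$, hence of the form $\mathcal E_{\varpi_6}\otimes\mathcal O(a)$, and $\det N_{E_6/P_1/\mathbb P^{26}}=\mathcal O(27-12)=\mathcal O(15)$ pins down $a=1$; thus $\mathcal E_{\varpi_6}\cong N_{E_6/P_1/\mathbb P^{26}}(-1)$. By Borel--Weil--Bott (Theorem~\ref{BBW}), $H^0(E_6/P_1,\mathcal E_{\varpi_6})\cong V_{E_6}(\varpi_6)^{\ast}\cong V_{E_6}(\varpi_1)$, and via the Euler sequence on $\mathbb P^{26}=\mathbb P(V_{E_6}(\varpi_1))$ the section of $N(-1)$ attached to a vector $v\in V_{E_6}(\varpi_1)$ vanishes at a point $x\in E_6/P_1$ precisely when $v$ lies in the affine tangent space to the cone over $E_6/P_1$ at $x$ (and every section arises from such a $v$, the $E_6$-map $V_{E_6}(\varpi_1)\to H^0(\mathcal E_{\varpi_6})$ being a nonzero map of irreducible $27$-dimensional modules). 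Hence $Z_0\ne\varnothing$ would force $v$ onto the tangential variety of $E_6/P_1\subset\mathbb P^{26}$; but this is one of the four Severi varieties, for which the tangential variety coincides with the secant variety, namely the Cartan cubic hypersurface, a \emph{proper} subvariety of $\mathbb P^{26}$. Therefore a general $v$ has $Z_0=\varnothing$, so $Z=\varnothing$, and this case cannot occur.

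The only non-formal step is the last one: recognising $\mathcal E_{\varpi_6}$ as $N_{E_6/P_1/\mathbb P^{26}}\otimes\mathcal O(-1)$ and invoking the fact that the tangential (= secant) variety of the Severi variety $E_6/P_1$ is a cubic hypersurface rather than all of $\mathbb P^{26}$; everything else is bookkeeping with ranks and dexes together with Proposition~\ref{linear section}. If one prefers to avoid the projective geometry, the same conclusion is obtained by verifying $c_{10}(\mathcal E_{\varpi_6})=0$ directly, by a Schubert-calculus computation in the (explicitly known) cohomology ring of the Cayley plane.
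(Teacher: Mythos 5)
Your argument is correct and follows essentially the same route as the paper: bound the irreducible summands using the representation theory of the $\Spin(10,\mathbb C)$ Levi factor, observe that a general global section of $\mathcal E_{\varpi_6}$ vanishes nowhere so that this summand is excluded, and conclude via Proposition~\ref{linear section}. The only difference is that where the paper simply cites Faenzi--Manivel for the nowhere-vanishing statement, you supply a self-contained (and correct) proof via the identification $\mathcal E_{\varpi_6}\cong N_{E_6/P_1/\mathbb P^{26}}(-1)$ and the fact that the tangential variety of the Severi variety $E_6/P_1$ is the Cartan cubic, a proper subvariety of $\mathbb P^{26}$.
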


\begin{proof}
The (complex) Cayley plane $E_6/P_1$ has dimension 16 and Fano index 12. 
Recall that the semisimple part of the maximal parabolic subgroup $P_1 \subset E_6$ is isomorphic to $\Spin(10, \mathbb C)$. 
Since irreducible representations of $\Spin(10, \mathbb C)$ having dimension $\leq 16-4=12$ are only the trivial and standard representation, 
it suffices to consider the line bundles $\mathcal O(k)$ ($k \geq 1$) and the equivariant vector bundle $\mathcal E_{\varpi_6}$ of rank 10. 
However, since a general global section of $\mathcal E_{\varpi_6}$ vanishes nowhere as explained in~\cite[Section~1.2]{FM}, 
we obtain the result by Proposition~\ref{linear section}.  
\end{proof}

\begin{proposition} \label{E6/P2}
Let $\mathcal F$ be a completely reducible, globally generated, equivariant vector bundle over $E_6/P_2$.
If $Z \subset E_6/P_2$ is a fourfold with trivial canonical bundle which is the zero locus of a general global section of~$\mathcal F$, 
then $\mathcal F$ is isomorphic to 
either $\mathcal E_{\varpi_1}^{\oplus 2} \oplus \mathcal O(1)^{\oplus 5}$ or $\mathcal E_{\varpi_1} \oplus \mathcal E_{\varpi_6} \oplus \mathcal O(1)^{\oplus 5}$ or $\mathcal E_{\varpi_6}^{\oplus 2} \oplus \mathcal O(1)^{\oplus 5}$.
\end{proposition}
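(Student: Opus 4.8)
The plan is to mimic the case-by-case analysis of the previous propositions, exploiting that the semisimple part of $P_2 \subset E_6$ is $\SL(6,\mathbb C)$, obtained by deleting node $2$ from the Dynkin diagram of $E_6$. First I would record the numerics: $E_6/P_2$ has dimension $21$ and Fano index $11$, so by the adjunction constraints~\eqref{conditions} a completely reducible $\mathcal F = \bigoplus_i \mathcal F_i$ whose general section cuts out a fourfold with trivial canonical bundle must satisfy $\rank(\mathcal F) = 17$ and $\dex(\mathcal F) = 11$. Then I would enumerate the irreducible, globally generated, equivariant summands of rank $\leq 17$. Since every nontrivial irreducible $\SL(6,\mathbb C)$-module has dimension $\geq 6$ and the only ones of dimension $\leq 17$ are the standard module $\mathbb C^6$, its dual, and $\Exterior^2\mathbb C^6$ together with its dual (the next one, $\Exterior^3\mathbb C^6$, already has dimension $20$), such an $\mathcal F_i$ is, up to twist by $\mathcal O(1)=\mathcal E_{\varpi_2}$, one of: a line bundle $\mathcal O(k)$ with $k\geq 1$; a rank-$6$ bundle $\mathcal E_{\varpi_1}(k)$ or $\mathcal E_{\varpi_6}(k)$ with $k\geq 0$; or a rank-$15$ bundle $\mathcal E_{\varpi_3}(k)$ or $\mathcal E_{\varpi_5}(k)$ with $k\geq 0$.

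Next I would pin down the relevant dex values. A direct weight computation in the spirit of Proposition~\ref{dex of classical types} (using that the sum of the weights of an $\SL(6,\mathbb C)$-module is invariant under the Weyl group of the Levi, hence a multiple of $\varpi_2$) gives $\dex(\mathcal E_{\varpi_1}) = \dex(\mathcal E_{\varpi_6}) = 3$, the equality of the two also being forced by the outer automorphism of $E_6$; and since $\mathcal E_{\varpi_3} = \Exterior^2\mathcal E_{\varpi_1}$ is a rank-$15$ bundle, $\det\mathcal E_{\varpi_3} = (\det\mathcal E_{\varpi_1})^{\otimes 5}$, so $\dex(\mathcal E_{\varpi_3}) = \dex(\mathcal E_{\varpi_5}) = 15$. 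Combined with $\dex(\mathcal O(k)) = k$ and additivity of $\dex$ and $\rank$ under tensoring by $\mathcal O(1)$, this shows that every irreducible globally generated summand of rank $\leq 17$ has $\dex/\rank$ equal to $\tfrac12$ — which occurs only for $\mathcal E_{\varpi_1}$ and $\mathcal E_{\varpi_6}$ — or else $\geq 1$, with value exactly $1$ only for $\mathcal O(1)$, $\mathcal E_{\varpi_3}$, and $\mathcal E_{\varpi_5}$.

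Then I would run the counting argument. Let $t$ be the total number of copies of $\mathcal E_{\varpi_1}$ and $\mathcal E_{\varpi_6}$ appearing in $\mathcal F$, and let $\mathcal F'$ be the direct sum of the remaining summands, so that each summand of $\mathcal F'$ has $\dex/\rank \geq 1$ and hence $\dex(\mathcal F') \geq \rank(\mathcal F')$. From $\rank(\mathcal F) = 6t + \rank(\mathcal F') = 17$ and $\dex(\mathcal F) = 3t + \dex(\mathcal F') = 11$ one gets $11 - 3t \geq 17 - 6t$, i.e. $t \geq 2$, while $6t \leq 17$ forces $t \leq 2$; hence $t = 2$, $\rank(\mathcal F') = \dex(\mathcal F') = 5$, and every summand of $\mathcal F'$ has $\dex/\rank = 1$. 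An irreducible equivariant bundle of rank $\leq 5$ over $E_6/P_2$ must be a line bundle $\mathcal O(k)$ (no nontrivial $\SL(6,\mathbb C)$-module has dimension $< 6$), and $\dex/\rank = 1$ forces $k = 1$; therefore $\mathcal F' \cong \mathcal O(1)^{\oplus 5}$ and $\mathcal F \cong \mathcal E_{\varpi_1}^{\oplus a}\oplus\mathcal E_{\varpi_6}^{\oplus b}\oplus\mathcal O(1)^{\oplus 5}$ with $a+b=2$, which is exactly the asserted trichotomy.

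I expect the main obstacle to be the clean justification of the dex values, in particular $\dex(\mathcal E_{\varpi_1}) = 3$: unlike the classical Grassmannian cases there is no ready-made formula, so one must either carry out the $E_6$ weight bookkeeping by hand or extract it from the minimal embedding $E_6/P_2 \hookrightarrow \mathbb P(V_{E_6}(\varpi_2))$ together with known Chern class data. A secondary point is to confirm that $\mathcal E_{\varpi_1}$ — and hence, via the outer automorphism, $\mathcal E_{\varpi_6}$ — is globally generated, so that all three cases genuinely survive the globally-generated hypothesis.
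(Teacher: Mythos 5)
Your proposal is correct and follows essentially the same route as the paper: record $\dim = 21$, $\iota = 11$, tabulate the rank and dex of the irreducible equivariant bundles of rank $\leq 17$ over $E_6/P_2$ (using the $\SL(6,\mathbb C)$ Levi and the weight-sum/Weyl-invariance computation of Proposition~\ref{dex of classical types}), and solve the constraints $\rank(\mathcal F)=17$, $\dex(\mathcal F)=11$ from~\eqref{conditions}. Your explicit counting argument for $t=2$ merely spells out the step the paper compresses into ``considering the conditions in~\eqref{conditions},'' and your dex values agree with Table~\ref{table_dex_E6_P2}.
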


\begin{proof}
The rational homogeneous variety $E_6/P_2 \subset \mathbb P^{77}$ has dimension~$21$ and Fano index~$11$. 
Note that the semisimple part of the maximal parabolic subgroup $P_2 \subset E_6$ is isomorphic to $\SL(6, \mathbb C)$. 
Using the formula for the dex in Proposition~\ref{dex of classical types}, 
we obtain the dex of irreducible equivariant vector bundles with rank~$\leq 21-4=17$ as in~Table~\ref{table_dex_E6_P2}.
\begin{table}[H]
\begin{tabular}{c c c c c}
		\toprule
		$\lambda$ & $\varpi_1$  & $\varpi_3$  & $\varpi_5$  & $\varpi_6$
\\
		\midrule 
	$\rank (\mathcal E_{\lambda})$	&	$6$	&	$15$  &	$15$ 	&	$6$ 	
\\
	$\dex (\mathcal E_{\lambda})$	&	$3$	&	$15$  &	$15$ 	&	$3$ 	
\\
		\bottomrule
\end{tabular}
\caption{The irreducible equivariant vector bundles over $E_6/P_2$ with rank $\leq 17$.} 
\label{table_dex_E6_P2}
\end{table}
Considering the conditions in~\eqref{conditions}, 
$\mathcal F$ is isomorphic to 
either $\mathcal E_{\varpi_1}^{\oplus 2} \oplus \mathcal O(1)^{\oplus 5}$ or 
$\mathcal E_{\varpi_1} \oplus \mathcal E_{\varpi_6} \oplus \mathcal O(1)^{\oplus 5}$ or 
$\mathcal E_{\varpi_1}^{\oplus 2} \oplus \mathcal O(1)^{\oplus 5}$. 
\end{proof}

\begin{remark} 
Since the outer automorphism of $E_6$ corresponding to the symmetry of the Dynkin diagram induces a bundle isomorphism between $\mathcal E_{\varpi_1}$ and $\mathcal E_{\varpi_6}$, 
the zero locus $Z \subset E_6/P_2$ of a general global section of $\mathcal E_{\varpi_1}^{\oplus 2} \oplus \mathcal O(1)^{\oplus 5}$ in Proposition~\ref{E6/P2} 
is projectively equivalent to the zero locus of a general global section of~$\mathcal E_{\varpi_1} \oplus \mathcal E_{\varpi_6} \oplus \mathcal O(1)^{\oplus 5}$ or $\mathcal E_{\varpi_6}^{\oplus 2} \oplus \mathcal O(1)^{\oplus 5}$. 
\end{remark}

\begin{proposition} \label{E6/P3}
Let $\mathcal F$ be a completely reducible, globally generated, equivariant vector bundle over $E_6/P_3$.
If $Z \subset E_6/P_3$ is a fourfold with trivial canonical bundle which is the zero locus of a general global section of~$\mathcal F$, 
then $\mathcal F$ is isomorphic to 
either $\mathcal E_{\varpi_1}^{\oplus 3} \oplus \mathcal E_{\varpi_6}^{\oplus 3}$ or $\mathcal E_{\varpi_6}^{\oplus 4} \oplus \mathcal O(1)$.  
\end{proposition}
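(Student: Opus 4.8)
The plan is to run the same kind of analysis as for $E_6/P_1$ and $E_6/P_2$, using the ratio inequality of Lemma~\ref{ratio}. From the root data, $E_6/P_3$ has dimension $25$ and Fano index $9$, so by~\eqref{conditions} one needs $\rank(\mathcal F) = 21$ and $\dex(\mathcal F) = 9$. Deleting the node $3$ from the Dynkin diagram of $E_6$ shows that the semisimple part of $P_3$ is isomorphic to $\SL(2, \mathbb{C}) \times \SL(5, \mathbb{C})$ (type $A_1 \times A_4$): the node $1$ yields the $\SL(2, \mathbb{C})$-factor, while the path through nodes $2, 4, 5, 6$ yields the $\SL(5, \mathbb{C})$-factor, with $\varpi_2, \varpi_6$ corresponding to the two endpoint fundamental weights and $\varpi_4, \varpi_5$ to the two middle ones. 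Consequently there are only finitely many globally generated irreducible equivariant vector bundles over $E_6/P_3$ of rank $\le 25-4 = 21$ (equivalently, $\mathcal E_\mu$ with $\mu$ a $G$-dominant weight and $\rank(\mathcal E_\mu) \le 21$), and I would list all of them with their dex: using Proposition~\ref{dex of classical types}(1) in the $\SL(5, \mathbb{C})$-direction, the formula $\dex(\mathcal F_1 \otimes \mathcal F_2) = \rank(\mathcal F_2)\dex(\mathcal F_1) + \rank(\mathcal F_1)\dex(\mathcal F_2)$ to handle tensor products and twists by $\mathcal O(1) = \mathcal E_{\varpi_3}$, and the fact that $\dex(\mathcal E_\mu)\varpi_3$ equals the sum of the $E_6$-weights of the $P_3$-module $V(\mu)$ to deal with the $\SL(2, \mathbb{C})$-direction. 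In particular $\mathcal E_{\varpi_1}$ has rank $2$ and dex $1$, while $\mathcal E_{\varpi_6}$ has rank $5$ and dex $2$.

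The key point extracted from this table is that $\mathcal E_{\varpi_6}$, with ratio $\dex/\rank = 2/5$, is the \emph{only} irreducible equivariant vector bundle over $E_6/P_3$ whose dex/rank ratio is smaller than $\frac{\iota(E_6/P_3)}{\dim(E_6/P_3)-4} = \frac{9}{21} = \frac{3}{7}$; every other irreducible summand has ratio $\ge \tfrac12$, with equality only for $\mathcal E_{\varpi_1}$ (twisting by $\mathcal O(1)$ or tensoring with a second nontrivial factor strictly raises the ratio, and $\mathcal E_{k\varpi_1}$ has ratio $k/2$). Hence, by Lemma~\ref{ratio}, at least one summand of $\mathcal F$ equals $\mathcal E_{\varpi_6}$; write $\mathcal F = \mathcal E_{\varpi_6}^{\oplus s} \oplus \mathcal G$ with $s \ge 1$ and $\mathcal G$ a sum of the remaining irreducible bundles, so that $\rank(\mathcal G) = 21 - 5s$ and $\dex(\mathcal G) = 9 - 2s$. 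Then $5s < 21$ gives $s \le 4$, while $\dex(\mathcal G) \ge \tfrac12 \rank(\mathcal G)$ (each summand of $\mathcal G$ having ratio $\ge \tfrac12$) gives $9 - 2s \ge \tfrac12(21 - 5s)$, i.e. $s \ge 3$. For $s = 3$ one needs $\rank(\mathcal G) = 6$ and $\dex(\mathcal G) = 3$, so equality holds in $\dex(\mathcal G) \ge \tfrac12 \rank(\mathcal G)$ and every summand of $\mathcal G$ has ratio exactly $\tfrac12$, forcing $\mathcal G \cong \mathcal E_{\varpi_1}^{\oplus 3}$; for $s = 4$ one needs $\rank(\mathcal G) = 1$ and $\dex(\mathcal G) = 1$, forcing $\mathcal G \cong \mathcal O(1)$. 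This yields the two bundles in the statement.

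The only genuine computation lies in the first paragraph: pinning down the Levi decomposition of $P_3$, matching each $\varpi_i$ ($i \ne 3$) with the correct fundamental weight of $\SL(2, \mathbb{C}) \times \SL(5, \mathbb{C})$, and evaluating the dex of the finitely many relevant irreducible bundles of rank $\le 21$ (equivalently, the sum of the $E_6$-weights of the corresponding $P_3$-modules, read off as a multiple of $\varpi_3$). Once that table is established, the combinatorial step above is immediate and leaves exactly the two cases. Finally, as in Propositions~\ref{E6/P1} and~\ref{E6/P2}, the highest weights $\varpi_1$, $\varpi_3$, $\varpi_6$ are $G$-dominant, so $\mathcal O(1)$, $\mathcal E_{\varpi_1}$, $\mathcal E_{\varpi_6}$ are globally generated, and in either case a general section of $\mathcal F$ cuts out a smooth fourfold with trivial canonical bundle; thus both cases genuinely occur.
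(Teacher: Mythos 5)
Your proposal is correct and follows essentially the same route as the paper: compute $\dim = 25$, $\iota = 9$, identify the semisimple part of $P_3$ as $\SL(2,\mathbb C)\times\SL(5,\mathbb C)$, tabulate the irreducible bundles of rank $\le 21$ and dex $\le 9$, observe that $\mathcal E_{\varpi_6}$ (ratio $2/5$) is the unique summand with $\dex/\rank$ below $9/21$, and force copies of it into $\mathcal F$. Your explicit inequality $9-2s\ge\tfrac12(21-5s)$ pinning down $s\in\{3,4\}$ is a clean way of making precise the paper's terser ``repeating this process'' step.
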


\begin{proof}
The rational homogeneous variety $E_6/P_3$ has dimension $25$ and Fano index $9$. 
Note that the rational homogeneous variety $E_6/P_5 \subset \mathbb P^{350}$ is projectively equivalent to $E_6/P_3 \subset \mathbb P^{350}$
because the highest weight $E_6$-module $V_{E_6}(\varpi_5)$ is dual to $V_{E_6}(\varpi_3)$. 

Since the semisimple part of the maximal parabolic subgroup $P_3 \subset E_6$ is isomorphic to $\SL(2, \mathbb C) \times \SL(5, \mathbb C)$, 
by Proposition~\ref{dex of classical types} irreducible equivariant vector bundles with rank $\leq 25-4=21$ and dex $\leq 9$ are given in Table~\ref{table_dex_E6_P3}.
\begin{table}[H]
\begin{tabular}{c c c c c c c}
		\toprule
		$\lambda$ & $\varpi_1$   & $2\varpi_1$ & $3\varpi_1$ & $\varpi_2$ & $\varpi_5$ & $\varpi_6$
\\
		\midrule 
	$\rank (\mathcal E_{\lambda})$	&	$2$	&	$3$  &	$4$ 	&	$5$ 	&	$10$  &	$5$ 
\\
	$\dex (\mathcal E_{\lambda})$	&	$1$	&	$3$  &	$6$ 	&	$3$ 	&	$8$  &	$2$ 
\\
		\bottomrule
\end{tabular}
\caption{The irreducible equivariant bundles over $E_6/P_3$ with rank $\leq 21$ and dex $\leq 9$.} 
\label{table_dex_E6_P3}
\end{table}
Except for $\lambda=\varpi_6$, we have  
\[
\frac{\dex(\mathcal E_{\lambda})}{\rank(\mathcal E_{\lambda})} > \frac{9}{21} = \frac{\iota(E_6/P_3)}{\dim(E_6/P_3)-4}
\] 
so that $\mathcal F$ must have $\mathcal E_{\varpi_6}$ as direct summands.  
Repeating this process,  
$\mathcal F$ is isomorphic to 
either $\mathcal E_{\varpi_1}^{\oplus 3} \oplus \mathcal E_{\varpi_6}^{\oplus 3}$ or $\mathcal E_{\varpi_6}^{\oplus 4} \oplus \mathcal O(1)$  
from the conditions in~\eqref{conditions}.  
\end{proof}

The argument in Proposition \ref{E6/P3} implies that 
a general global section of $\mathcal E_{\varpi_1} \oplus \mathcal E_{\varpi_6}^{\oplus 4}$ over $E_6/P_3$ gives a Calabi--Yau threefold. 
See Table~\ref{table2}. 

\begin{proposition} \label{E7/P1}
Let $\mathcal F$ be a completely reducible, globally generated, equivariant vector bundle over $E_7/P_1$.
If $Z \subset E_7/P_1$ is a fourfold with trivial canonical bundle which is the zero locus of a general global section of $\mathcal F$, 
then $\mathcal F$ is isomorphic to $\mathcal E_{\varpi_7}^{\oplus 2} \oplus \mathcal O(1)^{\oplus 5}$.  
\end{proposition}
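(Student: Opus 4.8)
The plan is to argue exactly as in Propositions~\ref{E6/P1}, \ref{E6/P2} and~\ref{E6/P3}: the hypotheses $\dim Z = 4$ and $K_Z = \mathcal O_Z$ translate, through the adjunction formula~\eqref{conditions}, into $\rank(\mathcal F) = \dim(E_7/P_1) - 4$ and $\dex(\mathcal F) = \iota(E_7/P_1)$, and then one enumerates the finitely many completely reducible, globally generated $\mathcal F$ meeting these two conditions.

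First I would record the basic data. The variety $E_7/P_1$ is the adjoint variety of $E_7$, embedded in $\mathbb P(V_{E_7}(\varpi_1)) = \mathbb P^{132}$; it has dimension $33$ and Fano index $17$, so~\eqref{conditions} becomes $\rank(\mathcal F) = 29$ and $\dex(\mathcal F) = 17$. Deleting the node $1$ from the Dynkin diagram of $E_7$ leaves a diagram of type $D_6$, so the semisimple part of $P_1$ is $\Spin(12, \mathbb C)$. The key representation-theoretic input is that the only irreducible $\Spin(12,\mathbb C)$-modules of dimension at most $29$ are the trivial one and the $12$-dimensional vector module (the two half-spin modules have dimension $32$, and $\Exterior^2$ of the vector module has dimension $66$). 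Under the identification of fundamental weights, the vector module of this $D_6$ is the restriction of $\varpi_7$; hence, as in Proposition~\ref{E6/P1}, it suffices to consider the line bundles $\mathcal O(k)$ with $k \geq 1$ together with the rank-$12$ equivariant bundles $\mathcal E_{\varpi_7} \otimes \mathcal O(k)$ with $k \geq 0$, since a trivial summand would force $Z = \varnothing$ (a general section of $\mathcal O$ being a nonzero constant).

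Next I would compute dexes. One has $\dex(\mathcal O(1)) = 1$ by Definition~\ref{dex}, and $\dex(\mathcal E_{\varpi_7}) = 6$: indeed $\dex(\mathcal E_{\varpi_7})\,\varpi_1$ is the sum of the weights of the vector $D_6$-module, and if such a weight is written as $\varpi_7 - \sum_{i \geq 2} c_i \alpha_i$ in terms of the simple roots of $E_7$, its $\varpi_1$-coefficient equals $c_3$ (because $\alpha_3$ is the unique simple root of the Levi adjacent to $\alpha_1$), so summing $c_3$ over the $12$ weights gives $6$. Hence $\dex(\mathcal E_{\varpi_7} \otimes \mathcal O(k)) = 6 + 12k$, and the ratios $\dex/\rank$ of the admissible irreducible summands are $k \geq 1$ for $\mathcal O(k)$, exactly $\tfrac12$ for $\mathcal E_{\varpi_7}$, and $\tfrac12 + k \geq \tfrac32$ for $\mathcal E_{\varpi_7} \otimes \mathcal O(k)$ with $k \geq 1$. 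Since $\iota(E_7/P_1)/(\dim(E_7/P_1) - 4) = \tfrac{17}{29}$ lies strictly between $\tfrac12$ and $1$, Lemma~\ref{ratio} forces $\mathcal F$ to contain at least one copy of $\mathcal E_{\varpi_7}$. Writing $\mathcal F = \mathcal E_{\varpi_7}^{\oplus a} \oplus \bigoplus_j \mathcal G_j$, where each $\mathcal G_j$ satisfies $\dex(\mathcal G_j) \geq \rank(\mathcal G_j)$ with equality only when $\mathcal G_j \cong \mathcal O(1)$, the identities $12a + \sum_j \rank(\mathcal G_j) = 29$ and $6a + \sum_j \dex(\mathcal G_j) = 17$ give $29 - 12a = \sum_j \rank(\mathcal G_j) \leq \sum_j \dex(\mathcal G_j) = 17 - 6a$, hence $a \geq 2$, while $12a \leq 29$ forces $a = 2$. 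Then $\sum_j \rank(\mathcal G_j) = \sum_j \dex(\mathcal G_j) = 5$, so every $\mathcal G_j$ must be $\mathcal O(1)$ and there are five of them; that is, $\mathcal F \cong \mathcal E_{\varpi_7}^{\oplus 2} \oplus \mathcal O(1)^{\oplus 5}$.

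The main obstacle is bookkeeping rather than conceptual: extracting $\dex(\mathcal E_{\varpi_7}) = 6$ amounts to tracking which of the twelve weights of the $D_6$-vector module acquire a nonzero $\varpi_1$-coefficient once expanded in the simple roots of $E_7$, equivalently running the weight string of that representation down to its lowest weight. The claim that no other small $\Spin(12,\mathbb C)$-module intervenes is a quick dimension count, and the final combinatorial step is elementary. For completeness---though it is not needed for the implication above---one should also note that, in contrast with $\mathcal E_{\varpi_6}$ on $E_6/P_1$, a general section of $\mathcal E_{\varpi_7}^{\oplus 2} \oplus \mathcal O(1)^{\oplus 5}$ really does cut out a fourfold, which is what justifies listing this pair in Table~\ref{table1}.
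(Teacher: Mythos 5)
Your proposal is correct and follows essentially the same route as the paper: identify the semisimple part of $P_1$ as $\Spin(12,\mathbb C)$, observe that the only irreducible modules of dimension at most $29$ are the trivial and standard ones, compute $\dex(\mathcal E_{\varpi_7})=6$ from the sum of the weights, and solve the rank/dex constraints $(29,17)$. The paper leaves the final combinatorial step implicit, whereas you spell it out (and also explicitly rule out the twists $\mathcal E_{\varpi_7}\otimes\mathcal O(k)$), but this is just added detail, not a different argument.
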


\begin{proof}
As the fundamental $E_7$-module $V_{E_7}(\varpi_1)$ is the adjoint representation $\mathfrak e_7$,  
the rational homogeneous variety $E_7/P_1$ is the adjoint variety of $E_7$. 
The $E_7$-adjoint variety $E_7/P_1 \subset \mathbb P^{132}$ has dimension $33$ and Fano index $17$. 
Since the semisimple part of the maximal parabolic subgroup $P_1 \subset E_7$ is isomorphic to $\Spin(12, \mathbb C)$, 
and irreducible representations of $\Spin(12, \mathbb C)$ having dimension $\leq 33-4=29$ are only the trivial and standard representation, 
it suffices to consider the line bundles $\mathcal O(k)$ ($k \geq 1$) and the equivariant vector bundle $\mathcal E_{\varpi_7}$ of rank $12$. 

From the direct computation using the action of the Weyl group, 
we know that the sum of all weights of the irreducible $P_1$-module with highest weight $\varpi_7$ is equal to $6 \varpi_1$. 
Therefore, $\dex(\varpi_7)=6$ and 
$\mathcal F$ is isomorphic to $\mathcal E_{\varpi_7}^{\oplus 2} \oplus \mathcal O(1)^{\oplus 5}$ from the conditions in~\eqref{conditions}.  
\end{proof}

\begin{proposition} \label{F4/P1}
Let $\mathcal F$ be a completely reducible, globally generated, equivariant vector bundle over $F_4/P_1$.
If $Z \subset F_4/P_1$ is a fourfold with trivial canonical bundle which is the zero locus of a general global section of~$\mathcal F$, 
then $\mathcal F$ is isomorphic to $\mathcal E_{\varpi_4} \oplus \mathcal O(1)^{\oplus 5}$.
\end{proposition}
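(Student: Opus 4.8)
The plan is to mimic the proof of Proposition~\ref{E7/P1}, since $F_4/P_1$ is again an adjoint variety. First I would record the numerics: $V_{F_4}(\varpi_1) = \mathfrak{f}_4$ is the $52$-dimensional adjoint representation, so $F_4/P_1 \subset \mathbb P^{51}$ is the adjoint variety of $F_4$, of dimension $15$ and Fano index $8$. Deleting the node $1$ from the $F_4$ diagram in Table~\ref{Dynkin} leaves the sub-diagram on the nodes $2,3,4$, which is of type $C_3$ with the node $4$ of $F_4$ at the ``standard-representation'' end; hence the semisimple part of $P_1$ is $\Sp(6,\mathbb C)$ and $\mathcal E_{\varpi_4}$ is the rank-$6$ bundle attached to its standard module $\mathbb C^6$. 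Since an irreducible $P_1$-module restricts to an irreducible $\Sp(6,\mathbb C)$-module and the only irreducible $\Sp(6,\mathbb C)$-modules of dimension $\leq \dim(F_4/P_1) - 4 = 11$ are the trivial one and $\mathbb C^6$, every irreducible summand of a globally generated $\mathcal F$ is either a line bundle $\mathcal O(k)$ with $k\geq 1$ (an $\mathcal O$ summand is impossible, for a general section would then be nowhere zero) or a twist $\mathcal E_{\varpi_4}\otimes\mathcal O(k) = \mathcal E_{\varpi_4 + k\varpi_1}$ of rank $6$ with $k\geq 0$.

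Next I would compute the dex and solve the resulting arithmetic. As in Proposition~\ref{E7/P1}, a direct computation with the Weyl group of the Levi shows that the sum of all weights of the irreducible $P_1$-module of highest weight $\varpi_4$ equals $3\varpi_1$ --- the weights of the standard $\Sp(6,\mathbb C)$-module occur in opposite pairs and cancel, leaving only the one-dimensional central character --- so $\dex(\mathcal E_{\varpi_4}) = 3$ and $\dex(\mathcal E_{\varpi_4 + k\varpi_1}) = 3 + 6k$ by multiplicativity of the dex under a line-bundle twist. Writing $\mathcal F = \bigoplus_{i=1}^{a}\mathcal E_{\varpi_4 + k_i\varpi_1} \oplus \bigoplus_{j=1}^{b}\mathcal O(\ell_j)$ with $k_i\geq 0$ and $\ell_j\geq 1$, the conditions~\eqref{conditions} become $6a + b = 11$ and $3a + 6\sum_i k_i + \sum_j \ell_j = 8$. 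From $6a\leq 11$ we get $a\leq 1$; the case $a=0$ is impossible since then $b = 11$ and $\sum_j\ell_j\geq 11 > 8$; so $a = 1$, $b = 5$, and $6k_1 + \sum_j\ell_j = 5$ with $\ell_j\geq 1$ forces $k_1 = 0$ and $\ell_1 = \cdots = \ell_5 = 1$. Therefore $\mathcal F\cong\mathcal E_{\varpi_4}\oplus\mathcal O(1)^{\oplus 5}$.

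The step I expect to be the main obstacle is the dex computation $\dex(\mathcal E_{\varpi_4}) = 3$: the formulas of Proposition~\ref{dex of classical types} are stated for classical Grassmannians and do not directly cover this maximal parabolic of $F_4$, so one must carry out the weight-sum (equivalently, determinant) calculation by hand, keeping careful track of the embedding of the $C_3$-root data inside $F_4$ and of the central character of the Levi; everything else is the same elementary bookkeeping as in Proposition~\ref{E7/P1}.
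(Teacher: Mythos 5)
Your proposal is correct and follows essentially the same route as the paper's proof: identify the semisimple part of $P_1$ as $\Sp(6,\mathbb C)$, reduce to line bundles and (twists of) the rank-$6$ bundle $\mathcal E_{\varpi_4}$ by the dimension bound $\leq 11$, compute $\dex(\mathcal E_{\varpi_4})=3$ by summing the Weyl-orbit of weights by hand, and solve the rank/dex equations from~\eqref{conditions}. Your treatment is in fact slightly more explicit than the paper's in ruling out the twists $\mathcal E_{\varpi_4+k\varpi_1}$ with $k\geq 1$ and in spelling out the final arithmetic, but the substance is identical.
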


\begin{proof}
The rational homogeneous variety $F_4/P_1 \subset \mathbb P^{51}$ has dimension $15$ and Fano index $8$. 
Since the semisimple part of the maximal parabolic subgroup $P_1 \subset F_4$ is isomorphic to $\Sp(6, \mathbb C)$, 
and irreducible representations of $\Sp(6, \mathbb C)$ having dimension $\leq 15-4=11$ are only the trivial and standard representation, 
it suffices to consider the line bundles $\mathcal O(k)$ ($k \geq 1$) and the equivariant vector bundle $\mathcal E_{\varpi_4}$ of rank $6$. 

\begin{center}
\begin{tabular}{rrl}
\begin{tikzpicture}[scale=.5, node distance=0.5cm, baseline=-0.5ex]

\node[point] (1) {};
\node[state] (2) [right = of 1] {};
\node[state] (3) [right = of 2] {};
\node[state] (4) [right =of 3] {};

\draw (1)--(2)
(3)--(4);
\draw[double line] (2)-- node{$>$} (3);

\foreach \c [count = \x from 1] in {{},{},{},{1}} 
\node[above = 0.1cm] at (\x) {\small $\c$};

\end{tikzpicture} 
& $\stackrel{s_4}{\mapsto}\quad$
\begin{tikzpicture}[scale=.5, node distance=0.5cm, baseline=-0.5ex]

\node[point] (1) {};
\node[state] (2) [right = of 1] {};
\node[state] (3) [right = of 2] {};
\node[state] (4) [right =of 3] {};

\draw (1)--(2)
(3)--(4);
\draw[double line] (2)-- node{$>$} (3);

\foreach \c [count = \x from 1] in {{},{},{1},{-1}} 
\node[above = 0.1cm] at (\x) {\small $\c$};

\end{tikzpicture} 
& $\stackrel{s_3}{\mapsto}\quad$
\begin{tikzpicture}[scale=.5, node distance=0.5cm, baseline=-0.5ex]

\node[point] (1) {};
\node[state] (2) [right = of 1] {};
\node[state] (3) [right = of 2] {};
\node[state] (4) [right =of 3] {};

\draw (1)--(2)
(3)--(4);
\draw[double line] (2)-- node{$>$} (3);

\foreach \c [count = \x from 1] in {{},{1},{-1},{}} 
\node[above = 0.1cm] at (\x) {\small $\c$};
\end{tikzpicture} \\
$\stackrel{s_2}{\mapsto}\quad$
\begin{tikzpicture}[scale=.5, node distance=0.5cm, baseline=-0.5ex]

\node[point] (1) {};
\node[state] (2) [right = of 1] {};
\node[state] (3) [right = of 2] {};
\node[state] (4) [right =of 3] {};

\draw (1)--(2)
(3)--(4);
\draw[double line] (2)-- node{$>$} (3);

\foreach \c [count = \x from 1] in {{1},{-1},{1},{}} 
\node[above = 0.1cm] at (\x) {\small $\c$};

\end{tikzpicture}
&$\stackrel{s_3}{\mapsto}\quad$
\begin{tikzpicture}[scale=.5, node distance=0.5cm, baseline=-0.5ex]

\node[point] (1) {};
\node[state] (2) [right = of 1] {};
\node[state] (3) [right = of 2] {};
\node[state] (4) [right =of 3] {};

\draw (1)--(2)
(3)--(4);
\draw[double line] (2)-- node{$>$} (3);

\foreach \c [count = \x from 1] in {{1},{},{-1},{1}} 
\node[above = 0.1cm] at (\x) {\small $\c$};

\end{tikzpicture}
&$\stackrel{s_4}{\mapsto}\quad $
\begin{tikzpicture}[scale=.5, node distance=0.5cm, baseline=-0.5ex]

\node[point] (1) {};
\node[state] (2) [right = of 1] {};
\node[state] (3) [right = of 2] {};
\node[state] (4) [right =of 3] {};

\draw (1)--(2)
(3)--(4);
\draw[double line] (2)-- node{$>$} (3);

\foreach \c [count = \x from 1] in {{1},{},{},{-1}} 
\node[above = 0.1cm] at (\x) {\small $\c$};

\end{tikzpicture}
\end{tabular}
\end{center}

From the direct computation using the action of the Weyl group, 
we know that the sum of all weights of the irreducible $P_1$-module with highest weight $\varpi_4$ is equal to $3 \varpi_1$. 
Therefore, $\dex(\varpi_4)=3$ and 
$\mathcal F$ is isomorphic to $\mathcal E_{\varpi_4} \oplus \mathcal O(1)^{\oplus 5}$ from the conditions in~\eqref{conditions}.  
\end{proof}

\begin{proposition} \label{F4/P4}
Let $\mathcal F$ be a completely reducible, globally generated, equivariant vector bundle over $F_4/P_4$.
If $Z \subset F_4/P_4$ is a fourfold with trivial canonical bundle which is the zero locus of a general global section of~$\mathcal F$, 
then $\mathcal F$ is isomorphic to 
either $\mathcal O(1)^{\oplus 11}$ or $\mathcal E_{\varpi_1} \oplus \mathcal O(1)^{\oplus 4}$. 
\end{proposition}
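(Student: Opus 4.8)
The plan is to argue exactly as in Propositions~\ref{E6/P1} and~\ref{F4/P1}: cut the problem down to a finite list of irreducible equivariant bundles by a rank bound, pin down their dex by a Weyl-group computation, and then conclude from the numerical constraints~\eqref{conditions}.

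First I would record the numerology. The rational homogeneous variety $F_4/P_4 \subset \mathbb P^{25}$ has dimension $15$ and Fano index $11$, and the semisimple part of the maximal parabolic subgroup $P_4 \subset F_4$ is isomorphic to $\Spin(7, \mathbb C)$, of type $B_3$ (the sub-diagram on the nodes $1,2,3$ in Table~\ref{Dynkin}). If $Z \subset F_4/P_4$ is a fourfold with trivial canonical bundle, then $\rank(\mathcal F) = 15 - 4 = 11$ and $\dex(\mathcal F) = \iota(F_4/P_4) = 11$ by~\eqref{conditions}. Since the rank of an irreducible equivariant bundle $\mathcal E_\lambda$ depends only on the highest weight of the underlying $\Spin(7, \mathbb C)$-module, and the only irreducible representations of $\Spin(7, \mathbb C)$ of dimension $\leq 11$ are the trivial one, the standard $7$-dimensional one, and the $8$-dimensional spin one, the only globally generated irreducible equivariant bundles over $F_4/P_4$ that can occur as a direct summand of $\mathcal F$ are the line bundles $\mathcal O(k)$ with $k \geq 1$, the rank-$7$ bundle $\mathcal E_{\varpi_1}$, the rank-$8$ bundle $\mathcal E_{\varpi_3}$, and the twists $\mathcal E_{\varpi_1 + k\varpi_4}$, $\mathcal E_{\varpi_3 + k\varpi_4}$ for $k \geq 1$.

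Next I would compute the dex of these bundles by the method of Proposition~\ref{F4/P1}. The sum of all weights of the irreducible $P_4$-module $V(\lambda)$, regarded in the weight lattice of $F_4$, is invariant under the Weyl group of the Levi and hence equals $\dex(\mathcal E_\lambda)\,\varpi_4$; pairing with $\alpha_4^\vee$ and using that $\alpha_4$ is joined only to $\alpha_3$ among the simple roots of the Levi shows that $\dex(\mathcal E_\lambda) = \sum_\mu n_3(\mu)$, where $\mu$ ranges over the weights of $V(\lambda)$ and $n_3(\mu)$ is the coefficient of $\alpha_3$ in $\lambda - \mu$ (a nonnegative integer combination of $\alpha_1, \alpha_2, \alpha_3$). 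Running over the $7$ weights of the standard $B_3$-module yields $\dex(\mathcal E_{\varpi_1}) = 7$, and running over the $8$ weights $\tfrac12(\pm e_1 \pm e_2 \pm e_3)$ of the spin module yields $\dex(\mathcal E_{\varpi_3}) = 12$. Combined with $\dex(\mathcal E_{\lambda + k\varpi_4}) = \dex(\mathcal E_\lambda) + k\,\rank(\mathcal E_\lambda)$ and $\dex(\mathcal O(k)) = k$, this shows that every irreducible equivariant bundle $\mathcal F_i$ in the list above satisfies $\dex(\mathcal F_i)/\rank(\mathcal F_i) \geq 1$, with equality precisely for $\mathcal O(1)$ and $\mathcal E_{\varpi_1}$.

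To finish, I would observe that $\iota(F_4/P_4)/(\dim(F_4/P_4) - 4) = 11/11 = 1$, so the conditions~\eqref{conditions} say exactly that the $\rank$-weighted average of the ratios $\dex(\mathcal F_i)/\rank(\mathcal F_i)$ equals $1$ (this is the computation underlying Lemma~\ref{ratio}); hence every direct summand of $\mathcal F$ must be one of the ratio-$1$ bundles $\mathcal O(1)$ or $\mathcal E_{\varpi_1}$. Since $\rank(\mathcal E_{\varpi_1}) = 7 > 11/2$, at most one copy of $\mathcal E_{\varpi_1}$ appears, and matching $\rank(\mathcal F) = 11$ leaves exactly $\mathcal F \cong \mathcal O(1)^{\oplus 11}$ or $\mathcal F \cong \mathcal E_{\varpi_1} \oplus \mathcal O(1)^{\oplus 4}$; alternatively, the first case is precisely the content of Proposition~\ref{linear section}. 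The only genuinely nontrivial step, and the one to get right, is the dex of the spin bundle $\mathcal E_{\varpi_3}$: one must check that it exceeds its rank $8$ (it equals $12$), since if it were equal to $8$ then $\mathcal E_{\varpi_3} \oplus \mathcal O(1)^{\oplus 3}$ would also satisfy~\eqref{conditions} and would have to be added to the list.
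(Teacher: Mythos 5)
Your proposal is correct and follows essentially the same route as the paper's proof: restrict to the $\Spin(7,\mathbb C)$-representations of dimension at most $11$ (trivial, standard, spin), compute $\dex(\mathcal E_{\varpi_1})=7$ and $\dex(\mathcal E_{\varpi_3})=12$ by summing weights under the Weyl group of the Levi, and conclude from the constraints $\rank(\mathcal F)=\dex(\mathcal F)=11$ in~\eqref{conditions}. Your explicit treatment of the twists $\mathcal E_{\lambda+k\varpi_4}$ and the ratio-$1$ argument are harmless refinements of what the paper leaves implicit, and you correctly identify $\dex(\mathcal E_{\varpi_3})=12$ as the decisive computation that rules out the spin bundle.
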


\begin{proof}
The rational homogeneous variety $F_4/P_4 \subset \mathbb P^{25}$ has dimension $15$ and Fano index $11$. 
Since the semisimple part of the maximal parabolic subgroup $P_4 \subset F_4$ is isomorphic to $\Spin(7, \mathbb C)$, 
and irreducible representations of $\Spin(7, \mathbb C)$ having dimension $\leq 15-4=11$ are only the trivial, standard and spin representation, 
it suffices to consider the line bundles $\mathcal O(k)$ ($k \geq 1$), the equivariant vector bundles $\mathcal E_{\varpi_1}$ of rank $7$ and $\mathcal E_{\varpi_3}$ of rank $8$. 

\[
\begin{tikzcd}[column sep = 4ex, row sep = 3ex]
&&
\begin{tikzpicture}[anchor = center, scale=.5, node distance=0.5cm, baseline=-0.5ex]			
	\node[state] (1) {};
	\node[state] (2) [right = of 1] {};
	\node[state] (3) [right = of 2] {};
	\node[point] (4) [right =of 3] {};
	\draw (1)--(2)
		(3)--(4);
	\draw[double line] (2)-- node{$>$} (3);
			
	\foreach \c [count = \x from 1] in {{},{-1},{2},{}} 
	\node[above = 0.1cm] at (\x) {\small $\c$};
			
\end{tikzpicture} 
\arrow[rd, "s_3"]
\\
\begin{tikzpicture}[anchor = center, scale=.5, node distance=0.5cm, baseline = -0.5ex]	
	\node[state] (1) {};
	\node[state] (2) [right = of 1] {};
	\node[state] (3) [right = of 2] {};
	\node[point] (4) [right =of 3] {};

	\draw (1)--(2)
		(3)--(4);
	\draw[double line] (2)-- node{$>$} (3);
				
	\foreach \c [count = \x from 1] in {{1},{},{},{}} 
	\node[above = 0.1cm] at (\x) {\small $\c$};

\end{tikzpicture} 
\arrow[r, "s_1"]
&
\begin{tikzpicture}[anchor = center, scale=.5, node distance=0.5cm, baseline = -0.5ex]
	\node[state] (1) {};
	\node[state] (2) [right = of 1] {};
	\node[state] (3) [right = of 2] {};
	\node[point] (4) [right =of 3] {};
					
	\draw (1)--(2)
		(3)--(4);			
	\draw[double line] (2)-- node{$>$} (3);
			
	\foreach \c [count = \x from 1] in {{-1},{1},{},{}} 
	\node[above = 0.1cm] at (\x) {\small $\c$};
\end{tikzpicture} 
\arrow[ur, "s_2"] 
\arrow[dr, dashed, "-\alpha_3"]
&& \begin{tikzpicture}[anchor = center, scale=.5, node distance=0.5cm, baseline=-0.5ex]
			
	\node[state] (1) {};
	\node[state] (2) [right = of 1] {};
	\node[state] (3) [right = of 2] {};
	\node[point] (4) [right =of 3] {};
	\draw (1)--(2)
		(3)--(4);
	\draw[double line] (2)-- node{$>$} (3);
			
	\foreach \c [count = \x from 1] in {{},{1},{-2},{2}} 
	\node[above = 0.1cm] at (\x) {\small $\c$};
			
\end{tikzpicture} 
\arrow[r, "s_2"]
& \begin{tikzpicture}[anchor = center, scale=.5, node distance=0.5cm, baseline=-0.5ex]
	\node[state] (1) {};
	\node[state] (2) [right = of 1] {};
	\node[state] (3) [right = of 2] {};
	\node[point] (4) [right =of 3] {};
	\draw (1)--(2)
		(3)--(4);
	\draw[double line] (2)-- node{$>$} (3);
			
	\foreach \c [count = \x from 1] in {{1},{-1},{},{2}} 
	\node[above = 0.1cm] at (\x) {\small $\c$};
\end{tikzpicture} \arrow[d, "s_1"] \\
&& \begin{tikzpicture}[anchor = mid, scale=.5, node distance=0.5cm, baseline=-0.5ex]
			
	\node[state] (1) {};
	\node[state] (2) [right = of 1] {};
	\node[state] (3) [right = of 2] {};
	\node[point] (4) [right =of 3] {};
	\draw (1)--(2)
		(3)--(4);
	\draw[double line] (2)-- node{$>$} (3);
	\foreach \c [count = \x from 1] in {{},{},{},{1}} 
	\node[above = 0.1cm] at (\x) {\small $\c$};
			
\end{tikzpicture}
\arrow[ur, dashed, "-\alpha_3"]
&& \begin{tikzpicture}[anchor = center, scale=.5, node distance=0.5cm, baseline=-0.5ex]
	\node[state] (1) {};
	\node[state] (2) [right = of 1] {};
	\node[state] (3) [right = of 2] {};
	\node[point] (4) [right =of 3] {};
	\draw (1)--(2)
		(3)--(4);
	\draw[double line] (2)-- node{$>$} (3);
	\foreach \c [count = \x from 1] in {{-1},{},{},{2}} 
	\node[above = 0.1cm] at (\x) {\small $\c$};
\end{tikzpicture} 
\end{tikzcd}
\]
From the direct computation, 
the sum of all weights of the irreducible $P_4$-module with highest weight $\varpi_1$ is equal to $7 \varpi_1$. 
Therefore, $\dex(\varpi_1)=7$ and 
$\mathcal F$ is isomorphic to $\mathcal E_{\varpi_1} \oplus \mathcal O(1)^{\oplus 4}$ from the conditions in~\eqref{conditions}.  

\[
\begin{tikzcd}[column sep = 4ex, row sep = 3ex]
\begin{tikzpicture}[anchor = center, scale=.5, node distance=0.5cm, baseline=-0.5ex]
	\node[state] (1) {};
	\node[state] (2) [right = of 1] {};
	\node[state] (3) [right = of 2] {};
	\node[point] (4) [right =of 3] {};

	\draw (1)--(2)
	(3)--(4);
	\draw[double line] (2)-- node{$>$} (3);
	
	\foreach \c [count = \x from 1] in {{},{},{1},{}} 
	\node[above = 0.1cm] at (\x) {\small $\c$};
	
\end{tikzpicture} 
\arrow[d, "s_3"]
&&	\begin{tikzpicture}[anchor = center, scale=.5, node distance=0.5cm, baseline=-0.5ex]

\node[state] (1) {};
\node[state] (2) [right = of 1] {};
\node[state] (3) [right = of 2] {};
\node[point] (4) [right =of 3] {};

\draw (1)--(2)
(3)--(4);
\draw[double line] (2)-- node{$>$} (3);

\foreach \c [count = \x from 1] in {{-1},{},{1},{1}} 
\node[above = 0.1cm] at (\x) {\small $\c$};

\end{tikzpicture}  
\arrow[rd, "s_3"]
\\
\begin{tikzpicture}[anchor = center, scale=.5, node distance=0.5cm, baseline=-0.5ex]

\node[state] (1) {};
\node[state] (2) [right = of 1] {};
\node[state] (3) [right = of 2] {};
\node[point] (4) [right =of 3] {};

\draw (1)--(2)
(3)--(4);
\draw[double line] (2)-- node{$>$} (3);

\foreach \c [count = \x from 1] in {{},{1},{-1},{1}} 
\node[above = 0.1cm] at (\x) {\small $\c$};

\end{tikzpicture} 
\arrow[r, "s_2"]
& \begin{tikzpicture}[anchor = center, scale=.5, node distance=0.5cm, baseline=-0.5ex]
	
	\node[state] (1) {};
	\node[state] (2) [right = of 1] {};
	\node[state] (3) [right = of 2] {};
	\node[point] (4) [right =of 3] {};

	\draw (1)--(2)
	(3)--(4);
	\draw[double line] (2)-- node{$>$} (3);
	
	\foreach \c [count = \x from 1] in {{1},{-1},{1},{1}} 
	\node[above = 0.1cm] at (\x) {\small $\c$};
	
\end{tikzpicture} 
\arrow[ru, "s_1"]
\arrow[rd, "s_3"]
&& \begin{tikzpicture}[anchor = center, scale=.5, node distance=0.5cm, baseline = -0.5ex]

\node[state] (1) {};
\node[state] (2) [right = of 1] {};
\node[state] (3) [right = of 2] {};
\node[point] (4) [right =of 3] {};

\draw (1)--(2)
(3)--(4);
\draw[double line] (2)-- node{$>$} (3);

\foreach \c [count = \x from 1] in {{-1},{1},{-1},{2}} 
\node[above = 0.1cm] at (\x) {\small $\c$};
\end{tikzpicture} 
\arrow[r, "s_2"]
& \begin{tikzpicture}[anchor = center, scale=.5, node distance=0.5cm, baseline = -0.5ex]
	
	\node[state] (1) {};
	\node[state] (2) [right = of 1] {};
	\node[state] (3) [right = of 2] {};
	\node[point] (4) [right =of 3] {};

	\draw (1)--(2)
	(3)--(4);
	\draw[double line] (2)-- node{$>$} (3);
	
	\foreach \c [count = \x from 1] in {{},{-1},{1},{2}} 
	\node[above = 0.1cm] at (\x.center) {\small $\c$};
	
\end{tikzpicture}  
\arrow[d, "s_3"]
\\
&& 	\begin{tikzpicture}[anchor = center, scale=.5, node distance=0.5cm, baseline = -0.5ex]

\node[state] (1) {};
\node[state] (2) [right = of 1] {};
\node[state] (3) [right = of 2] {};
\node[point] (4) [right =of 3] {};

\draw (1)--(2)
(3)--(4);
\draw[double line] (2)-- node{$>$} (3);

\foreach \c [count = \x from 1] in {{1},{},{-1},{2}} 
\node[above = 0.1cm] at (\x) {\small $\c$};

\end{tikzpicture} 
\arrow[ru, "s_1"]
&& 	\begin{tikzpicture}[anchor = center, scale=.5, node distance=0.5cm, baseline = -0.5ex]
	
	\node[state] (1) {};
	\node[state] (2) [right = of 1] {};
	\node[state] (3) [right = of 2] {};
	\node[point] (4) [right =of 3] {};

	\draw (1)--(2)
	(3)--(4);
	\draw[double line] (2)-- node{$>$} (3);
	
	\foreach \c [count = \x from 1] in {{},{},{-1},{3}} 
	\node[above = 0.1cm] at (\x) {\small $\c$};
	
\end{tikzpicture} 
\end{tikzcd}
\]

On the other hand, since the sum of all weights of the irreducible $P_4$-module with highest weight $\varpi_3$ is equal to $12 \varpi_1$, 
we get $\dex(\varpi_3)=12$. 
Thus $\mathcal F$ can not have $\mathcal E_{\varpi_3}$ as a direct summand. 
\end{proof}

\begin{remark}
Since the 27-dimensional fundamental $E_6$-module $V_{E_6}(\varpi_1)$ may be regarded as an $F_4$-module using the embedding of $F_4$ in $E_6$
and it decomposes as $V_{F_4}(\varpi_4) \oplus V_{F_4}(0)$ (see~\cite[Proposition~13.32]{Carter}), 
the rational homogeneous variety $F_4/P_4$ is a general hyperplane section of the Cayley plane $E_6/P_1 \subset \mathbb P^{26}$ (see~\cite[Section~6.3]{LM}).
Thus, the zero locus $Z \subset F_4/P_4$ of a general global section of~$\mathcal O(1)^{\oplus 11}$ in Proposition \ref{F4/P4} is the same as the zero locus of a general global section of~$\mathcal O(1)^{\oplus 12}$ over $E_6/P_1$ in Proposition~\ref{E6/P1}. 
\end{remark}

\section{Computations of Hodge numbers}
\label{section_Hodge_numbers}

The Borel--Weil--Bott theorem (Theorem~\ref{BBW}) is a powerful tool for computing cohomologies of equivariant vector bundles over homogeneous varieties. 
Using  the Koszul complex, the conormal sequence and the exact sequences derived from this, 
the Hodge numbers of Calabi--Yau fourfolds and threefolds in Table~\ref{table1} and Table~\ref{table2} can be calculated by the similar way as in~\cite{Kuchle}. 

All examples have $h^{0, 0}(Z)=h^0(\mathcal O_Z)=1$ and $h^{0,1}(Z)=h^1(\mathcal O_Z)=0$ which mean that they are irreducible and Calabi--Yau in the strict sense. 

\subsection{Computations of $h^{0, q}(Z)$}

Let $G/P$ be a rational homogeneous variety of Picard number one, and $\mathcal F$ be a completely reducible, globally generated, equivariant vector bundle over $G/P$ of rank $n-4$, where $n=\dim(G/P)$. 
Using a similar argument as in Example~\ref{Beauville--Donagi},
the Borel--Weil--Bott theorem and the Koszul complex
$$0\to \Exterior^{n-4} \mathcal F^* \to \Exterior^{n-5} \mathcal F^* \to \cdots \to \Exterior^2 \mathcal F^* \to \mathcal F^* \to \mathcal O_{G/P} \to \mathcal O_Z \to 0$$
allow us to calculate the Hodge numbers $h^{0, q}(Z) = \dim H^q(Z, \mathcal O_Z)$ for the zero locus $Z$ of a general global section of $\mathcal F$. 

\begin{proposition}\label{prop_h0q_Z}
Let $G$ be a simple Lie group of exceptional type.
Let $G/P$ be a rational homogeneous variety of Picard number one, and $\mathcal F$ be a completely reducible, globally generated, equivariant vector bundle over $G/P$ of rank $n-4$. 
If the zero locus $Z$ of a general global section of $\mathcal F$ is a fourfold with trivial canonical bundle, 
then $h^{0, 0}(Z) = h^{0, 4}(Z)=1$ and $h^{0, q}(Z)=0$ for $q = 1,2,3$. 
\end{proposition}

\begin{proof}
First, if $Z$ is of No. 1, No. 8, No. 9 in the Table \ref{table1}, then $h^{0, 2}(Z)=0$ 
by Proposition \ref{direct sum of line bundles} and $h^{0, 4}(Z) = h^{0, 0}(Z)=1$ from the straightforward computations. 
Using the same argument as Example~\ref{Beauville--Donagi}, 
we compute the Hodge numbers $h^{0, 0}(Z), h^{0,2}(Z), h^{0, 4}(Z)$ of the other Calabi--Yau fourfolds in Table~\ref{table1}.

No. 2. $\mathcal F = \mathcal E_{\varpi_1}^{\oplus 2} \oplus \mathcal O(1)^{\oplus 5}$ over $E_6/P_2$

No. 3. $\mathcal F = \mathcal E_{\varpi_1}^{\oplus 3} \oplus \mathcal E_{\varpi_6}^{\oplus 3}$ over $E_6/P_3$

No. 4. $\mathcal F = \mathcal E_{\varpi_6}^{\oplus 4} \oplus \mathcal O(1)$ over $E_6/P_3$

No. 5. $\mathcal F = \mathcal E_{\varpi_7}^{\oplus 2} \oplus \mathcal O(1)^{\oplus 5}$ over $E_7/P_1$

No. 6. $\mathcal F = \mathcal E_{\varpi_4} \oplus \mathcal O(1)^{\oplus 5}$ over $F_4/P_1$

No. 7. $\mathcal F = \mathcal E_{\varpi_1} \oplus \mathcal O(1)^{\oplus 4}$ over $F_4/P_4$

For instance,  
let $Z$ be the zero locus of a general global section of the equivariant vector bundle $\mathcal F = \mathcal E_{\varpi_1} \oplus \mathcal O(1)^{\oplus 4}$ over $F_4/P_4$. 
As we have already seen in Proposition \ref{F4/P4}, the reductive part of the maximal parabolic subgroup $P_4 \subset F_4$ is isomorphic to $\Spin(7, \mathbb C) \times \mathbb C^*$. 
But weights of $P_4$-modules have to be computed inside the weight lattice of $F_4$. 
Using representations of $\Spin(10, \mathbb C)$ and adjusting the first Chern class, the wedge powers of the irreducible equivariant bundle $\mathcal E_{\varpi_1}$ are described as follows.
\begin{equation}\label{eq_exterior_Epi1_E6}
\begin{array}{lll}
\Exterior^2 \mathcal E_{\varpi_1}=\mathcal E_{\varpi_2}, 
& \Exterior^3 \mathcal E_{\varpi_1} =\mathcal E_{2\varpi_3},  
& \Exterior^4 \mathcal E_{\varpi_1} =\mathcal E_{2\varpi_3 + \varpi_4}=\mathcal E_{2\varpi_3}(1), \\ 
\Exterior^5 \mathcal E_{\varpi_1} =\mathcal E_{\varpi_2 + 3\varpi_4}=\mathcal E_{\varpi_2}(3), 
& \Exterior^6 \mathcal E_{\varpi_1}=\mathcal E_{\varpi_1 + 5\varpi_4}=\mathcal E_{\varpi_1}(5), 
& \Exterior^7 \mathcal E_{\varpi_1}=\mathcal E_{7\varpi_4}=\mathcal O(7). 
\end{array}
\end{equation}

We have the Koszul complex associated to a general global section of the vector bundle $\mathcal F$: 
$$0\to \Exterior^{11} \mathcal F^* \to \Exterior^{10} \mathcal F^* \to \cdots \to \Exterior^2 \mathcal F^* \to \mathcal F^* \to \mathcal O_{F_4/P_4} \to \mathcal O_Z \to 0.$$
Using~\eqref{eq_exterior_Epi1_E6} and $\mathcal E_{\varpi_1}^* = \mathcal E_{\varpi_1}(-2)$, we know  
\begin{align*}
\Exterior^2 \mathcal F^* 
&= \Exterior^2 \mathcal E_{\varpi_1}^* \oplus \mathcal E_{\varpi_1}^*(-1)^{\oplus 4} \oplus \mathcal O(-2)^{\oplus 6}  
= \mathcal E_{\varpi_2}(-4) \oplus \mathcal E_{\varpi_1}(-3)^{\oplus 4} \oplus \mathcal O(-2)^{\oplus 6}, \\
\Exterior^3 \mathcal F^* 
&= \Exterior^3 \mathcal E_{\varpi_1}^* \oplus \Exterior^2 \mathcal E_{\varpi_1}^*(-1)^{\oplus 4} \oplus \mathcal E_{\varpi_1}^*(-2)^{\oplus 6} \oplus \mathcal O(-3)^{\oplus 4} \\
&= \mathcal E_{2\varpi_3}(-6) \oplus \mathcal E_{\varpi_2}(-5)^{\oplus 4} \oplus \mathcal E_{\varpi_1}(-4)^{\oplus 6} \oplus \mathcal O(-3)^{\oplus 4}, \\
\Exterior^4 \mathcal F^* 
&= \mathcal E_{2\varpi_3}(-7) \oplus \mathcal E_{2\varpi_3}(-7)^{\oplus 4} \oplus \mathcal E_{\varpi_2}(-6)^{\oplus 6} \oplus \mathcal E_{\varpi_1}(-5)^{\oplus 4} \oplus \mathcal O(-4), \\
\Exterior^5 \mathcal F^* 
&= \mathcal E_{\varpi_2}(-7) \oplus \mathcal E_{2\varpi_3}(-8)^{\oplus 4} \oplus \mathcal E_{2\varpi_3}(-8)^{\oplus 6} \oplus \mathcal E_{\varpi_2}(-7)^{\oplus 4} \oplus \mathcal E_{\varpi_1}(-6), \\
\Exterior^6 \mathcal F^* 
&= \mathcal E_{\varpi_1}(-7) \oplus \mathcal E_{\varpi_2}(-8)^{\oplus 4} \oplus \mathcal E_{2\varpi_3}(-9)^{\oplus 6} \oplus \mathcal E_{2\varpi_3}(-9)^{\oplus 4} \oplus \mathcal E_{\varpi_2}(-8), \\
\Exterior^7 \mathcal F^* 
&= \mathcal O(-7) \oplus \mathcal E_{\varpi_1}(-8)^{\oplus 4} \oplus \mathcal E_{\varpi_2}(-9)^{\oplus 6} \oplus \mathcal E_{2\varpi_3}(-10)^{\oplus 4} \oplus \mathcal E_{2\varpi_3}(-10), \\
\Exterior^8 \mathcal F^* 
&= \mathcal O(-8)^{\oplus 4} \oplus \mathcal E_{\varpi_1}(-9)^{\oplus 6} \oplus \mathcal E_{\varpi_2}(-10)^{\oplus 4} \oplus \mathcal E_{2\varpi_3}(-11), \\
\Exterior^9 \mathcal F^* 
&= \mathcal O(-9)^{\oplus 6} \oplus \mathcal E_{\varpi_1}(-10)^{\oplus 4} \oplus \mathcal E_{\varpi_2}(-11), \\
\Exterior^{10} \mathcal F^* &= \mathcal O(-10)^{\oplus 4} \oplus \mathcal E_{\varpi_1}(-11), \\ 
\Exterior^{11} \mathcal F^* &= \mathcal O(-11). 
\end{align*}
Then each irreducible equivariant bundle in $\Exterior^p \mathcal F^*$ is acyclic for $1 \leq p \leq 10$.  
For example, we can check that $\mathcal E_{2\varpi_3}(-7)$ is acyclic by the Borel--Weil--Bott theorem 
because the weights $2\varpi_3 -7 \varpi_4 + \rho = \varpi_1 + \varpi_2 + 3\varpi_3 -6 \varpi_4$ is singular. 
Indeed, $(s_3 s_1 s_2 s_3 s_4) (\varpi_1 + \varpi_2 + 3\varpi_3 -6 \varpi_4)$ is orthogonal to the simple root $\alpha_2$. 
On the other hand, by the Serre duality, we get 
\[
H^{15}(F_4/P_4, \Exterior^{11} \mathcal F^* ) = H^{15}(F_4/P_4, \mathcal O(-11) ) \cong H^{0}(F_4/P_4, \mathcal O)^* \cong \mathbb C,
\] 
which implies $H^4(Z, \mathcal O_Z)\cong\mathbb C$. 
Consequently, we conclude $h^0(Z, \mathcal O_Z)=h^4(Z, \mathcal O_Z)=1$ and $h^q(Z, \mathcal O_Z)=0$ for $q=1, 2, 3$. 

We consider one more example. Let $Z$ be the zero locus of a general global section of the equivariant vector bundle $\mathcal F = \mathcal E_{\varpi_6}^{\oplus 4} \oplus \mathcal O(1)$ over $E_6/P_3$. 
Using the computer program \texttt{SageMath}~\cite{SAGE}, we provide weights appearing in each wedge power of the bundle $\mathcal F^{\ast}$ in Table~\ref{table_E6P3_all_weights}.
Each vector $(\lambda_1,\dots,\lambda_6)$ represents the weight $\lambda_1 \varpi_1 + \cdots +\lambda_6 \varpi_6$.
For example, the third wedge power of $\mathcal F^{\ast}$ is 
\[
\Exterior^3 \mathcal F^{\ast} = 
\mathcal E_{\varpi_4}(-3)^{\oplus 10} \oplus 
\mathcal E_{\varpi_5}(-2)^{\oplus 20} \oplus 
\mathcal E_{\varpi_2 + \varpi_4}(-3)^{\oplus 20} \oplus 
\mathcal E_{2\varpi_2}(-3)^{\oplus 6} \oplus
\mathcal E_{3 \varpi_2}(-3)^{\oplus 4}
\]
and this decomposition corresponds to vectors $(0, 0, -3, 1, 0, 0)$, 
$(0, 0, -2, 0, 1, 0)$, 
$(0, 1, -3, 1, 0, 0)$, 
$(0, 2, -3, 0, 0, 0)$, 
$(0, 3, -3, 0, 0, 0)$. 
\begin{table}[b]
\begin{tabularx}{\textwidth}{l|X}
\toprule
$p$ & weights appearing in $\Exterior^p \mathcal F^{\ast}$ \\
\midrule 
$1$ & 
\footnotesize{$(0, 0, -1, 0, 0, 0), 
(0, 1, -1, 0, 0, 0)$}\\
$2$ & 
\footnotesize{$(0, 0, -2, 1, 0, 0), 
(0, 1, -2, 0, 0, 0), 
(0, 2, -2, 0, 0, 0)$} \\
$3$&
\footnotesize{$(0, 0, -3, 1, 0, 0), 
(0, 0, -2, 0, 1, 0), 
(0, 1, -3, 1, 0, 0), 
(0, 2, -3, 0, 0, 0), 
(0, 3, -3, 0, 0, 0)$}\\
$4$&
\footnotesize{$(0, 0, -4, 2, 0, 0), 
(0, 0, -3, 0, 1, 0), 
(0, 0, -2, 0, 0, 1), 
(0, 1, -4, 1, 0, 0), 
(0, 1, -3, 0, 1, 0), 
(0, 2, -4, 1, 0, 0),(0, 3, -4, 0, 0, 0)$,} 
\footnotesize{$(0, 4, -4, 0, 0, 0)$} \\
$5$ & 
\footnotesize{$(0, 0, -5, 2, 0, 0), 
(0, 0, -4, 1, 1, 0), 
(0, 0, -3, 0, 0, 1), 
(0, 0, -2, 0, 0, 0), 
(0, 1, -5, 2, 0, 0), 
(0, 1, -4, 0, 1, 0), 
(0, 1, -3, 0, 0, 1),$} 
\footnotesize{$(0, 2, -5, 1, 0, 0), 
(0, 2, -4, 0, 1, 0), 
(0, 3, -5, 1, 0, 0), 
(0, 4, -5, 0, 0, 0)$} \\
$6$ &
\footnotesize{$(0, 0, -6, 3, 0, 0), 
(0, 0, -5, 1, 1, 0), 
(0, 0, -4, 0, 2, 0), 
(0, 0, -4, 1, 0, 1), 
(0, 0, -3, 0, 0, 0), 
(0, 1, -6, 2, 0, 0), 
(0, 1, -5, 1, 1, 0), $}
\footnotesize{$(0, 1, -4, 0, 0, 1), 
(0, 1, -3, 0, 0, 0), 
(0, 2, -6, 2, 0, 0), 
(0, 2, -5, 0, 1, 0), 
(0, 2, -4, 0, 0, 1), 
(0, 3, -6, 1, 0, 0), 
(0, 3, -5, 0, 1, 0)$} \\
$7$ &
\footnotesize{$(0, 0, -7, 3, 0, 0), 
(0, 0, -6, 2, 1, 0), 
(0, 0, -5, 0, 2, 0), 
(0, 0, -5, 1, 0, 1), 
(0, 0, -4, 0, 1, 1), 
(0, 0, -4, 1, 0, 0), 
(0, 1, -7, 3, 0, 0), $}
\footnotesize{$(0, 1, -6, 1, 1, 0), 
(0, 1, -5, 0, 2, 0), 
(0, 1, -5, 1, 0, 1), 
(0, 1, -4, 0, 0, 0), 
(0, 2, -7, 2, 0, 0), 
(0, 2, -6, 1, 1, 0), 
(0, 2, -5, 0, 0, 1), $}
\footnotesize{$(0, 2, -4, 0, 0, 0), 
(0, 3, -6, 0, 1, 0), 
(0, 3, -5, 0, 0, 1)$} \\
$8$ & 
\footnotesize{$(0, 0, -8, 4, 0, 0), 
(0, 0, -7, 2, 1, 0), 
(0, 0, -6, 1, 2, 0), 
(0, 0, -6, 2, 0, 1), 
(0, 0, -5, 0, 1, 1), 
(0, 0, -5, 1, 0, 0), 
(0, 0, -4, 0, 0, 2), $}
\footnotesize{$(0, 0, -4, 0, 1, 0), 
(0, 1, -8, 3, 0, 0), 
(0, 1, -7, 2, 1, 0), 
(0, 1, -6, 0, 2, 0), 
(0, 1, -6, 1, 0, 1), 
(0, 1, -5, 0, 1, 1), 
(0, 1, -5, 1, 0, 0), $}
\footnotesize{$(0, 2, -7, 1, 1, 0), 
(0, 2, -6, 0, 2, 0), 
(0, 2, -6, 1, 0, 1), 
(0, 2, -5, 0, 0, 0), 
(0, 3, -6, 0, 0, 1), 
(0, 3, -5, 0, 0, 0)$}\\
$9$ &
\footnotesize{$(0, 0, -9, 4, 0, 0), 
(0, 0, -8, 3, 1, 0), 
(0, 0, -7, 1, 2, 0), 
(0, 0, -7, 2, 0, 1), 
(0, 0, -6, 0, 3, 0), 
(0, 0, -6, 1, 1, 1), 
(0, 0, -6, 2, 0, 0), $}
\footnotesize{$(0, 0, -5, 0, 0, 2), 
(0, 0, -5, 0, 1, 0), 
(0, 0, -4, 0, 0, 1), 
(0, 1, -8, 2, 1, 0), 
(0, 1, -7, 1, 2, 0), 
(0, 1, -7, 2, 0, 1), 
(0, 1, -6, 0, 1, 1), $}
\footnotesize{$(0, 1, -6, 1, 0, 0), 
(0, 1, -5, 0, 0, 2), 
(0, 1, -5, 0, 1, 0), 
(0, 2, -7, 0, 2, 0), 
(0, 2, -7, 1, 0, 1), 
(0, 2, -6, 0, 1, 1), 
(0, 2, -6, 1, 0, 0), $}
\footnotesize{$(0, 3, -6, 0, 0, 0)$} \\
$10$&
\footnotesize{$(0, 0, -9, 3, 1, 0), 
(0, 0, -8, 2, 2, 0), 
(0, 0, -8, 3, 0, 1), 
(0, 0, -7, 0, 3, 0), 
(0, 0, -7, 1, 1, 1), 
(0, 0, -7, 2, 0, 0), 
(0, 0, -6, 0, 2, 1), $}
\footnotesize{$(0, 0, -6, 1, 0, 2), 
(0, 0, -6, 1, 1, 0), 
(0, 0, -5, 0, 0, 1), 
(0, 0, -4, 0, 0, 0), 
(0, 1, -8, 1, 2, 0), 
(0, 1, -8, 2, 0, 1), 
(0, 1, -7, 0, 3, 0), $}
\footnotesize{$(0, 1, -7, 1, 1, 1), 
(0, 1, -7, 2, 0, 0), 
(0, 1, -6, 0, 0, 2), 
(0, 1, -6, 0, 1, 0), 
(0, 1, -5, 0, 0, 1), 
(0, 2, -7, 0, 1, 1), 
(0, 2, -7, 1, 0, 0), $}
\footnotesize{$(0, 2, -6, 0, 0, 2), 
(0, 2, -6, 0, 1, 0)$} \\
\bottomrule
\end{tabularx}
\caption{Weights appearing in $\Exterior^p \mathcal F^{\ast}$ for $\mathcal F = \mathcal E_{\varpi_6}^{\oplus 4} \oplus \mathcal O(1)$ over $E_6/P_3$ for $1 \le p \le 10$.}
\label{table_E6P3_all_weights}
\end{table}
Using \texttt{SageMath}, we compute the weights appearing in $\Exterior^p \mathcal F^{\ast}$ for all $1\le p \le 20$ and check that all of them are singular. 
Accordingly, by the Borel--Weil--Bott theorem, each irreducible equivariant bundle in $\Exterior^p \mathcal F^{\ast}$ is acyclic for $1 \le p \le 20$. 
Finally, we obtain
\[
H^{25}(E_6/P_3, \Exterior^{21} \mathcal F^{\ast})
= H^{25}(E_6/P_3, \mathcal O(-9)) \cong H^{0}(E_6/P_3, \mathcal O)^{\ast} \cong \C,
\]
which implies $H^4(Z,\mathcal O_Z) \cong \C$. This proves the claim for this case. 

For the remaining cases, again using the computer program \texttt{SageMath}~\cite{SAGE}, we compute the exterior powers of bundles and check whether they are acyclic or not. 
\end{proof}

\begin{proof}[Proof of Theorem~\ref{hyperkahler fourfold}]
From Proposition~\ref{prop_h0q_Z} and Corollary~\ref{criterion}, 
each fourfold with trivial canonical bundle classified in Theorem~\ref{classification} for exceptional homogeneous varieties of Picard number one is not hyperk\"{a}hler. 
\end{proof}

\begin{proof}[Proof of Corollary~\ref{corollary of main result}]
Theorem 1.1 of~\cite{Ben} says that if $Z$ is a hyperk\"{a}hler fourfold 
which is the zero locus of a general global section of completely reducible, globally generated, equivariant vector bundles over a Grassmannian, orthogonal or symplectic Grassmannian, 
then $Z$ is either of Beauville--Donagi type or of Debarre--Voisin type. 
Hence, by Theorem~\ref{hyperkahler fourfold} we get the conclusion.  
\end{proof}

\subsection{Computations of $h^{1, q}(Z)$}

First, recall the geometric meaning of the Hodge number $h^{1, 3}(Z)=h^{3, 1}(Z)$ of a Calabi--Yau fourfold $Z$. 
Since $Z$ has trivial canonical bundle, $H^1(Z, \Omega^3_Z) = H^1(Z, K_Z \otimes T_Z) = H^1(Z, T_Z)$. 
By Kodaira--Spencer deformation theory (cf.~\cite{Kodaira}), 
$h^{3, 1}(Z)$ is equal to the dimension of deformation parameter space of $Z$ 
because $Z$ is unobstructed from $H^2(Z, T_Z)=0$.
 
If $G/P$ is not a Hermitian symmetric space of compact type, then the tangent bundle $T_{G/P}$ of $G/P$ is reducible. 
Nevertheless, using a filtration by $P$-submodules, 
the Borel--Weil--Bott theorem can be applied to compute cohomology groups of reducible equivariant vector bundles. 
Any $P$-module $V$ can be decomposed into a direct sum of irreducible $L$-modules 
$V = W_0 \oplus W_1 \oplus \cdots \oplus W_t$, 
where $L$ is the reductive part of a Levi decomposition of $P=L U$. 
Since the unipotent radical $U$ is a normal subgroup of $P$ and $U$ acts on $V$ in triangular fashion, 
these irreducible $L$-modules can be arranged such that $U. W_i \subset W_{j}$ with $j > i$. 
Then $V$ has a filtration $0 \subset V_t \subset \cdots \subset V_1 \subset V_0 = V$ such that $V_i/V_{i+1} \cong W_i$. 

Let $E_i = G \times_P V_i$ and $F_i = G \times_P W_i$ be the equivariant vector bundles associated with $V_i$ and $W_i$ over $G/P$, respectively.
Then we get a short exact sequence of vector bundles 
$0 \to E_{i+1} \to E_i \to F_i \to 0$. 

Clearly, when $W_i = V_P(\lambda_i)^*$ for a $P$-dominant weight $\lambda_i$, we have $F_i = \mathcal E_{\lambda_i}$. 

\begin{definition} 
Let $E = G \times_P V$ be a (reducible) equivariant vector bundle over $G/P$. 
For an irreducible decomposition $V = W_0 \oplus W_1 \oplus \cdots \oplus W_t$ as $L$-modules, 
we denote a $P$-dominant weight $\lambda_i$ such that $W_i = V_P(\lambda_i)^*$. 
We define the set 
\[
\text{RegInd}(E) \coloneqq \{ \text{index}(\lambda_i + \rho) : \lambda_i + \rho \text{ is regular for } 0 \leq i \leq t \},
\] 
where 
$\text{index}(\lambda)$ is the minimum among the lengths~$\ell(w)$ of the Weyl group elements $w$ making $w(\lambda)$ regular dominant. 
\end{definition}

\begin{proposition}[{\cite[Section~3]{Griffiths63}}]
\label{Griffiths}
Let $E$ be a \textup{(}reducible\textup{)} equivariant vector bundle over $G/P$. 
If $q \notin \text{\rm RegInd}(E)$, then $H^q(G/P, E)=0$. 
\end{proposition}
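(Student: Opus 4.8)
The plan is to induct on the length $t$ of the $P$-module filtration $0 \subset V_t \subset \cdots \subset V_1 \subset V_0 = V$, using the short exact sequences $0 \to E_{i+1} \to E_i \to F_i \to 0$ and the Borel--Weil--Bott theorem. First I would record the consequence of Theorem~\ref{BBW} for the irreducible bundles $F_i = \mathcal E_{\lambda_i}$: if $\lambda_i + \rho$ is singular then $H^q(G/P, F_i) = 0$ for all $q$, whereas if $\lambda_i + \rho$ is regular then $H^q(G/P, F_i)$ is nonzero only for the single value $q = \ell(w_i) = \text{index}(\lambda_i + \rho)$, where $w_i \in W$ is the unique element with $w_i(\lambda_i + \rho)$ dominant. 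In either case $H^q(G/P, F_i) \ne 0$ forces $q \in \text{RegInd}(E)$; equivalently, $q \notin \text{RegInd}(E)$ implies $H^q(G/P, F_i) = 0$ for every $i$. This already settles the base case $t = 0$, where $E = F_0$ is irreducible.

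For the inductive step I would use the short exact sequence $0 \to E_1 \to E_0 = E \to F_0 \to 0$ and the piece
\[
H^q(G/P, E_1) \longrightarrow H^q(G/P, E) \longrightarrow H^q(G/P, F_0)
\]
of its long exact cohomology sequence. The subbundle $E_1 = G \times_P V_1$ inherits a $P$-module filtration of length $t-1$ with graded pieces $F_1, \dots, F_t$, so directly from the definition $\text{RegInd}(E_1) \subseteq \text{RegInd}(E)$. Hence if $q \notin \text{RegInd}(E)$ then $q \notin \text{RegInd}(E_1)$, and the inductive hypothesis gives $H^q(G/P, E_1) = 0$; the first paragraph gives $H^q(G/P, F_0) = 0$. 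Exactness then forces $H^q(G/P, E) = 0$, which closes the induction.

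I do not anticipate a serious obstacle: the real content sits in the Borel--Weil--Bott theorem, and the remainder is homological bookkeeping. The only point needing a little care is the existence and shape of the filtration, namely that $V$, restricted to the reductive part $L$ of $P = LU$, splits as $W_0 \oplus \cdots \oplus W_t$ with the $W_i$ ordered so that the unipotent radical acts by $U.W_i \subseteq \bigoplus_{j > i} W_j$ (the triangularity already used in the text); this is what makes each $V_i = \bigoplus_{j \ge i} W_j$ a genuine $P$-submodule with irreducible quotients $V_i/V_{i+1} \cong W_i \cong V_P(\lambda_i)^*$, and it is what makes the inclusion $\text{RegInd}(E_1) \subseteq \text{RegInd}(E)$ immediate. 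If one prefers to avoid the induction, the same reasoning can be phrased through the spectral sequence of the filtered bundle $E$, whose $E_1$-page is $\bigoplus_i H^{\bullet}(G/P, F_i)$ and is therefore supported in the degrees belonging to $\text{RegInd}(E)$, hence so is the abutment $H^{\bullet}(G/P, E)$.
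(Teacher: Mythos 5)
Your argument is correct and is essentially the paper's own proof: the induction on the filtration length, applied to the short exact sequences $0 \to E_{i+1} \to E_i \to F_i \to 0$, unrolls to exactly the chain of long exact sequences the paper uses, starting from $E_t = F_t$ and climbing up to $E$. The only cosmetic difference is that you package the climb as an induction (with the easy observation $\mathrm{RegInd}(E_1) \subseteq \mathrm{RegInd}(E)$) rather than writing out the chain explicitly.
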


\begin{proof}
The short exact sequence of vector bundles 
$0 \to E_{i+1} \to E_i \to F_i \to 0$ 
leads to the following long exact sequences 
\begin{eqnarray*}
& \cdots \to H^q(G/P, E_t) \to H^q(G/P, E_{t-1}) \to H^q(G/P, F_{t-1}) \to \cdots \\
& \cdots \to H^q(G/P, E_{t-1}) \to H^q(G/P, E_{t-2}) \to H^q(G/P, F_{t-2}) \to \cdots \\
& \cdots \\
& \cdots \to H^q(G/P, E_1) \to H^q(G/P, E) \to H^q(G/P, F_0) \to \cdots 
\end{eqnarray*}
By Theorem \ref{BBW}, if $q \notin  \text{\rm RegInd}(E)$, then we have 
\[
H^q(G/P, E_t) = H^q(G/P, F_t) = 0, H^q(G/P, F_{t-1}) =0, \dots , H^q(G/P, F_0) = 0.
\]  
Hence we get $H^q(G/P, E) = 0$ from the above exact sequences. 
\end{proof}

From now on, we only consider a rational homogeneous variety $G/P_k$ of Picard number one, that is, $P_k$ is the maximal parabolic subgroup associated to a simple root $\alpha_k$ of $G$.
Let ${\mathfrak{p}}_k$ be the maximal parabolic subalgebra of $\mathfrak g$ associated to the simple root $\alpha_k$.
Given an integer $\ell$,
$-m \leq \ell \leq m$, $\Phi_{\ell}$ denotes the set of all roots 
$\alpha=\sum_{q=1}^{r} c_q \alpha_q$
with the $k$th coefficient $c_k = \ell$. 
Define $$\mathfrak g_0 = \mathfrak h \oplus \bigoplus_{\alpha\in\Phi_0}
\mathfrak g_{\alpha},\qquad \mathfrak g_{\ell} =
\bigoplus_{\alpha\in\Phi_{\ell}} \mathfrak g_{\alpha}, \,\, \ell \neq 0,$$
where $\mathfrak h$ is a Cartan subalgebra of $\mathfrak g$. 
Then we have a graded Lie algebra 
$\mathfrak g = \mathfrak g_{-m} \oplus \cdots \oplus \mathfrak g_{-1} \oplus \mathfrak g_0 \oplus \mathfrak g_1 \cdots \oplus \mathfrak g_m$  
of depth $m$ associated with the simple root $\alpha_k$. 
Using the natural identification 
\[
T_o(G/P_k)= \mathfrak g / {\mathfrak{p}}_k \cong \mathfrak g_{-m} \oplus \cdots \oplus \mathfrak g_{-1},
\] 
we describe the tangent bundle $T_{G/P_k}$ and the cotangent bundle $\Omega_{G/P_k}$ via irreducible equivariant vector bundles.
Here, $o = eP \in G/P_k$ where $e$ is the identity element of $G$.

\begin{example}[No. 9 in Table~\ref{table1}]
Let $Z$ be the zero locus of a general global section of the line bundle $\mathcal O(3)$ over $G_2/P_2$. 
Since $\Phi_{1} = \{ \alpha_2, \alpha_1 + \alpha_2, 2\alpha_1 + \alpha_2, 3\alpha_1 + \alpha_2 \}$ and 
$\Phi_{2} = \{ 3\alpha_1 + 2\alpha_2 \}$ for the root system of $G_2$, 
\begin{equation}\label{eq_ToG2P2}
T_o(G_2/P_2) \cong \mathfrak g_{-2} \oplus \mathfrak g_{-1} = V_{P_2}(-3\alpha_1 - 2\alpha_2) \oplus V_{P_2}(- \alpha_2)
\end{equation}
has a filtration 
\[
0 \subset V_{P_2}(-\varpi_2) \subset V_{P_2}(-\varpi_2) \oplus V_{P_2}(3\varpi_1 -2 \varpi_2).
\]
Moreover, we get a short exact sequence 
\begin{equation}\label{eq_G2P2_Omega}
0 \to \mathcal E_{-\varpi_2} \to \Omega_{G_2/P_2} \to \mathcal E_{3\varpi_1 -2 \varpi_2} \to 0
\end{equation}
from $\Omega_{G_2/P_2} = G_2 \times_{P_2} T_o(G_2/P_2)^*$. 
By Proposition \ref{Griffiths}, $H^q(G_2/P_2, \Omega_{G_2/P_2})=0$ for $q \neq 1$ 
because $\text{\rm RegInd}(\Omega_{G_2/P_2})=\{ 1 \}$.

Moreover, tensoring the Koszul complex 
\begin{equation}\label{eq_Koszul_G2P2}
0 \to \mathcal O(-3) \to \mathcal O_{G_2/P_2} \to \mathcal O_Z \to 0
\end{equation} 
with $\Omega_{G_2/P_2}$, 
we obtain the exact sequence
\[
0 \to \Omega_{G_2/P_2}(-3) \to \Omega_{G_2/P_2} \to \Omega_{G_2/P_2}|_Z \to 0.
\] 
Using $H^q(G_2/P_2, \Omega_{G_2/P_2}(-3))=0$ for $q \neq 5$ from $\text{\rm RegInd}(\Omega_{G_2/P_2}(-3))=\{ 5 \}$, 
we see 
\begin{align}
H^1(Z, \Omega_{G_2/P_2}|_Z) &\cong H^1(G_2/P_2, \Omega_{G_2/P_2}) \cong \mathbb C, \label{eq_G2P2_H1_Omega}\\ 
H^4(Z, \Omega_{G_2/P_2}|_Z) & \cong H^5(G_2/P_2, \Omega_{G_2/P_2}(-3)) \cong V_{G_2}(\varpi_2)^* \cong \mathbb C^{14},
\end{align}
and the other cohomologies vanish. 
Thus the conormal sequence $0 \to \mathcal O(-3)|_Z  \to \Omega_{G_2/P_2}|_Z \to \Omega_Z \to 0$ 
leads to the following exact sequences
\begin{align}
& 0 \to H^0(Z,\Omega_Z) \to H^1(Z, \mathcal{O}(-3)|_Z) \to H^1(Z, \Omega_{G_2/P_2}|_Z) \to H^1(Z, \Omega_Z) \to H^2(Z, \mathcal{O}(-3)|_Z) \to 0;  \label{eq_G2P2_long_exact_seq_from_conormal1}
 \\
& 0 \to H^2(Z, \Omega_Z) \to H^3(Z, \mathcal{O}(-3)|_Z) \to 0; \label{eq_G2P2_long_exact_seq_from_conormal2} \\
&0 \to H^3(Z, \Omega_Z) \to H^4(Z, \mathcal O(-3)|_Z) \to H^4(Z, \Omega_{G_2/P_2}|_Z) \to H^4(Z, \Omega_Z) \to H^5(Z, \mathcal O(-3)|_Z) \to 0. \label{eq_G2P2_long_exact_seq_from_conormal}
\end{align} 
Because $H^4(Z, \Omega_Z) = H^{1, 4}(Z) \cong H^{4, 1}(Z) = H^1(Z, \mathcal O_Z) =0$, 
we conclude that $H^5(Z, \mathcal O(-3)|_Z)=0$ and the sequence~\eqref{eq_G2P2_long_exact_seq_from_conormal} becomes
\[
\begin{tikzcd}[column sep = 0.5cm, row sep = 0.3cm]
0 \rar & H^3(Z, \Omega_Z) \rar 
& H^4(Z, \mathcal O(-3)|_Z) \rar 
& H^4(Z, \Omega_{G_2/P_2}|_Z) \rar 
& H^4(Z, \Omega_Z) \rar 
& H^5(Z, \mathcal O(-3)|_Z) \rar 
& 0. \\
&
&
& \mathbb C^{14} \arrow[u, labels = description, "\rotatebox{90}{$\cong$}"]
& 0 \arrow[u, equal]
& 0 \arrow[u, equal]
\end{tikzcd}
\]
Accordingly, we have 
\begin{equation}\label{eq_G2P2}
H^3(Z, \Omega_Z) \cong H^4(Z, \mathcal O(-3)|_Z) / H^4(Z, \Omega_{G_2/P_2}|_Z). 
\end{equation}

Similarly, tensoring the Koszul complex~\eqref{eq_Koszul_G2P2} with $\mathcal O(-3)$ 
we obtain 
\[
0 \to \mathcal O(-6) \to \mathcal O(-3) \to \mathcal O(-3)|_Z  \to 0.
\] 
The straightforward applications of the Borel--Weil--Bott theorem say that 
\[
\begin{split}
H^5(G_2/P_2, \mathcal O(-3)) &=V_{G_2}(0)^* \cong \mathbb C, \\
H^5(G_2/P_2, \mathcal O(-6)) &=V_{G_2}(3\varpi_2)^* \cong \mathbb C^{273}.
\end{split}
\] 
Since $\text{RegInd}(\mathcal O(-3)) = \text{RegInd}(\mathcal O(-6)) =  \{5\}$, we obtain
\[
\begin{tikzcd}[column sep = 0.5cm, row sep = 0.3cm]
0 \rar & H^4(Z, \mathcal O(-3)|_Z) \rar 
& H^5(G_2/P_2, \mathcal O(-6)) \rar 
& H^5(G_2/P_2, \mathcal O(-3)) \rar 
& H^5(Z, \mathcal O(-3)|_Z) \rar 
& 0. \\
&
& \C^{273} \arrow[u, labels = description, "\rotatebox{90}{$\cong$}"]
& \C \arrow[u, labels = description, "\rotatebox{90}{$\cong$}"]
& 0 \arrow[u, equal]
\end{tikzcd}
\]
Hence $H^4(Z, \mathcal O(-3)|_Z) \cong H^5(G_2/P_2, \mathcal O(-6)) / H^5(G_2/P_2, \mathcal O(-3)) \cong \mathbb C^{272}$, and moreover, $H^q(Z, \mathcal O(-3)|_Z) = 0$ for $q \neq 4$.
Therefore, by~\eqref{eq_G2P2_H1_Omega} the exact sequences~\eqref{eq_G2P2_long_exact_seq_from_conormal1} and~\eqref{eq_G2P2_long_exact_seq_from_conormal2} provide
\[
H^1(Z,\Omega_Z) \cong H^1(Z, \Omega_{G_2/P_2}|_Z) \cong \C; \quad
H^2(Z,\Omega_Z) \cong H^3(Z,\mathcal{O}(-3)|_Z) = 0.
\]
Accordingly, we have $h^{1, 1}(Z)  = 1$ and $h^{1, 2}(Z) = 0$.
By considering~\eqref{eq_G2P2}, we get 
\[
h^{1, 3}(Z) = \dim H^3(Z, \Omega_Z) = \dim H^4(Z, \mathcal O(-3)|_Z) - \dim H^4(Z, \Omega_{G_2/P_2}|_Z) = 272 - 14 = 258.
\]  
\end{example}

\begin{example}[No. 8 in Table~\ref{table1}]
Let $Z$ be the zero locus of a general global section of the line bundle~$\mathcal F = \mathcal O(5)$ over $G_2/P_1$. 
The Lie algebra of $G_2$ has a gradation of depth $3$ associated with the simple root $\alpha_1$. 
Since $\Phi_{1} = \{ \alpha_1, \alpha_1 + \alpha_2 \}$, 
$\Phi_{2} = \{ 2\alpha_1 + \alpha_2 \}$, 
$\Phi_{3} = \{ 3\alpha_1 + \alpha_2, 3\alpha_1 + 2\alpha_2 \}$, 
we have the decomposition 
\[
\begin{split}
T_o(G_2/P_1) \cong \mathfrak g_{-3} \oplus \mathfrak g_{-2} \oplus \mathfrak g_{-1} 
&= V_{P_1}(-3\alpha_1 - \alpha_2) \oplus V_{P_1}(-2\alpha_1 - \alpha_2) \oplus V_{P_1}(- \alpha_1) \\
&= V_{P_1}(-3\varpi_1 + \varpi_2) \oplus V_{P_1}(-\varpi_1) \oplus V_{P_1}(-2\varpi_1 + \varpi_2)
\end{split}
\]
and this gives a filtration 
\[
0 \subset \mathfrak g_{-3} \subset \mathfrak g_{-3} \oplus \mathfrak g_{-2} \subset \mathfrak g_{-3} \oplus \mathfrak g_{-2} \oplus \mathfrak g_{-1} = T_o(G_2/P_1).
\] 
Putting $E\coloneqq G_2 \times_{P_1} (\mathfrak g_{-3} \oplus \mathfrak g_{-2})^*$,  
we get two short exact sequences
\begin{equation}\label{eq_G2P1_two_exact_sequences}
\begin{tikzcd}[column sep = 0.5cm, row sep = 0cm]
0 \rar & \mathcal E_{-3\varpi_1 + \varpi_2} \rar & E \rar & \mathcal E_{-\varpi_1} \rar & 0,  \\
0 \rar & E \rar & \Omega_{G_2/P_1} \rar & \mathcal E_{-2\varpi_1 + \varpi_2} \rar & 0
\end{tikzcd}
\end{equation}
from $\Omega_{G_2/P_1} = G_2 \times_{P_1} T_o(G_2/P_1)^*$. 
We notice that the weights $-\varpi_1$ and $-3\varpi_1+ \varpi_2$ are singular while $-2\varpi_1+ \varpi_2$ is regular with $\text{index}(-2\varpi_1+ \varpi_2) = 1$.
By Proposition~\ref{Griffiths}, $H^q(G_2/P_1, \Omega_{G_2/P_1})=0$ for $q \neq 1$ 
because $\text{\rm RegInd}(\Omega_{G_2/P_2})=\{ 1 \}$.

Tensoring the Koszul complex 
\begin{equation}\label{eq_Koszul_G2P1}
0 \to \mathcal O(-5) \to \mathcal O_{G_2/P_1} \to \mathcal O_Z \to 0
\end{equation} 
with $\Omega_{G_2/P_1}$, 
we obtain the exact sequence 
\[
0 \to \Omega_{G_2/P_1}(-5) \to \Omega_{G_2/P_1} \to \Omega_{G_2/P_1}|_Z \to 0.
\] 
Using $H^q(G_2/P_1, \Omega_{G_2/P_1}(-5))=0$ for $q \neq 5$ from $\text{\rm RegInd}(\Omega_{G_2/P_1}(-5))=\{ 5 \}$, 
we see 
\begin{align}
H^1(Z, \Omega_{G_2/P_1}|_Z) &= H^1(G_2/P_1, \Omega_{G_2/P_1}) \cong \mathbb C, \label{eq_G2P1_H4Z_0} \\ 
H^4(Z, \Omega_{G_2/P_1}|_Z) &= H^5(G_2/P_1, \Omega_{G_2/P_1}(-5))= H^5(G_2/P_1, E(-5)) \cong \mathbb C^{21} \label{eq_G2P1_H4Z}
\end{align} 
and the other cohomologies vanish:
\begin{equation}\label{eq_G2P1_Z_i}
H^q(Z, \Omega_{G_2/P_1}|_Z) = 0 \quad \text{ for } q = 0,2,3.
\end{equation} 
Here, we explain how to get $H^5(G_2/P_1, E(-5)) \cong \mathbb C^{21} $ more precisely. 
Tensoring the first exact sequence in~\eqref{eq_G2P1_two_exact_sequences} with $\mathcal F^{\ast} = \mathcal{O}(-5)$, 
we obtain the exact sequence $0 \to \mathcal E_{\varpi_2}(-8) \to E(-5) \to \mathcal O(-6) \to 0$. 
This induces 
\[
\begin{tikzcd}[column sep = 0.5cm, row sep = 0.3cm]
0 \rar & H^5(G_2/P_1, \mathcal E_{\varpi_2}(-8)) \rar & H^5(G_2/P_1, E(-5))  \rar & H^5(G_2/P_1, \mathcal O(-6)) \rar &0,\\
& \C^{7} \arrow[u, labels = description, "\rotatebox{90}{$\cong$}"]
&
& \C^{14} \arrow[u, labels = description, "\rotatebox{90}{$\cong$}"]
\end{tikzcd}
\]
and we know $H^5(G_2/P_1, \mathcal E_{\varpi_2}(-8)) = V_{G_2}(\varpi_1)^* \cong \mathbb C^{7}$, $H^5(G_2/P_1, \mathcal O(-6)) = V_{G_2}(\varpi_2)^* \cong \mathbb C^{14}$. 
This provides $H^5(G_2/P_1, E(-5))  \cong \C^{21}$.

Now the conormal sequence $0 \to \mathcal O(-5)|_Z  \to \Omega_{G_2/P_1}|_Z \to \Omega_Z \to 0$ and~\eqref{eq_G2P1_Z_i} lead to the following long exact sequences
\begin{align}
& 0 \to H^0(Z,\Omega_Z) \to H^1(Z, \mathcal{O}(-5)|_Z) \to H^1(Z, \Omega_{G_2/P_1}|_Z) \to H^1(Z, \Omega_Z) \to H^2(Z, \mathcal{O}(-5)|_Z) \to 0;  \label{eq_G2P1_long_exact_seq_from_conormal1}
 \\
& 0 \to H^2(Z, \Omega_Z) \to H^3(Z, \mathcal{O}(-5)|_Z) \to 0; \label{eq_G2P1_long_exact_seq_from_conormal2} \\
&0 \to H^3(Z, \Omega_Z) \to H^4(Z, \mathcal O(-5)|_Z) \to H^4(Z, \Omega_{G_2/P_1}|_Z) \to H^4(Z, \Omega_Z) \to H^5(Z, \mathcal O(-5)|_Z) \to 0. \label{eq_G2P1_long_exact_seq_from_conormal}
\end{align} 
From $H^4(Z, \Omega_Z)=0$, we get $ H^5(Z, \mathcal O(-5)|_Z) = 0$ and $H^3(Z, \Omega_Z) \cong H^4(Z, \mathcal O(-5)|_Z) / H^4(Z, \Omega_{G_2/P_1}|_Z)$. 
Tensoring the Koszul complex~\eqref{eq_Koszul_G2P1} with $\mathcal O(-5)$, 
we have the exact sequence 
\[
0 \to \mathcal O(-10) \to \mathcal O(-5) \to \mathcal O(-5)|_Z \to 0.
\] 
Since $\text{RegInd}( \mathcal O(-10)) = 
\text{RegInd}( \mathcal O(-5)) = \{5\}$, 
the computations $H^5(G_2/P_1, \mathcal O(-10)) = V_{G_2}(5\varpi_1)^* \cong \mathbb C^{378}$ and $H^5(G_2/P_1, \mathcal O(-5)) \cong \mathbb C$ 
imply 
\begin{align}
H^4(Z, \mathcal O(-5)|_Z) &\cong H^5(G_2/P_1, \mathcal O(-10)) / H^5(G_2/P_1, \mathcal O(-5)) \cong \mathbb C^{377}; \label{eq_G2P1_H4ZO-5} \\
H^q(Z, \mathcal O(-5)|_Z) &= 0 \quad \text{ for }q \neq 4. \label{eq_G2P1_H4ZO-5_2}
\end{align}
By~\eqref{eq_G2P1_H4Z_0} and~\eqref{eq_G2P1_H4ZO-5_2}, the exact sequences~\eqref{eq_G2P1_long_exact_seq_from_conormal1} and~\eqref{eq_G2P1_long_exact_seq_from_conormal2} provide
\[
H^1(Z,\Omega_Z) \cong H^1(Z, \Omega_{G_2/P_1}|_Z) \cong \C; \quad
H^2(Z,\Omega_Z) \cong H^3(Z,\mathcal{O}(-5)|_Z) = 0.
\]
Moreover,
the exact sequence~\eqref{eq_G2P1_long_exact_seq_from_conormal} becomes
\[
\begin{tikzcd}[column sep = 0.5cm, row sep = 0.3cm]
0 \rar & H^3(Z, \Omega_Z) \rar & H^4(Z, \mathcal O(-5)|_Z) \rar 
& H^4(Z, \Omega_{G_2/P_1}|_Z) \rar & H^4(Z, \Omega_Z) \rar 
& H^5(Z, \mathcal O(-5)|_Z) \rar & 0 \\
&
& \C^{377} \arrow[u, labels = description, "\rotatebox{90}{$\cong$}"]
& \C^{21} \arrow[u, labels = description, "\rotatebox{90}{$\cong$}"]
& 0 \arrow[u, equal]
& 0 \arrow[u, equal]
\end{tikzcd}
\]
Here, the isomorphisms come from \eqref{eq_G2P1_H4Z} and~\eqref{eq_G2P1_H4ZO-5}.
Accordingly, we get
\[
\begin{split}
h^{1,1}(Z) &= \dim H^1(Z, \Omega_Z) = 1, \\
h^{1,2}(Z) &= \dim H^2(Z, \Omega_Z) = 0, \\
h^{1, 3}(Z) &= \dim H^3(Z, \Omega_Z) = \dim H^4(Z, \mathcal O(-5)|_Z) - \dim H^4(Z, \Omega_{G_2/P_1}|_Z) = 377 - 21 = 356.
\end{split}
\]   

\end{example}

In the same way as above, we obtain the Hodge number $h^{1,3}(Z)$ of the other Calabi--Yau fourfolds as in Table~\ref{table1}. 

\begin{proposition}\label{prop_h1q_Z}
Let $G$ be a simple Lie group of exceptional type.
Let $G/P$ be a rational homogeneous variety of Picard number one, and $\mathcal F$ be a completely reducible, globally generated, equivariant vector bundle over $G/P$. 
If the zero locus $Z$ of a general global section of $\mathcal F$ is a fourfold with trivial canonical bundle, 
then $h^{1, 0}(Z) = h^{1, 2}(Z) = 0$, $h^{1, 1}(Z) = 1$, and $h^{1, 3}(Z)$ are given as in Table~\ref{table1}. 
\end{proposition}

\begin{remark}
By the Hodge decomposition theorem and Proposition~\ref{prop_h0q_Z}, 
the result $h^{1, 1}(Z) = 1$ yields $H^2(Z, \mathbb C) = H^{1, 1}(Z) \cong \mathbb C$; 
hence $Z$ has Picard number one and $H^2(Z, \mathbb C)$ is spanned by the restriction of a K\"{a}hler form on $G/P$.
\end{remark}

In the same way as Section~\ref{sec_classification_fourfolds}, we classify Calabi--Yau threefolds 
which are zero loci of general global sections of completely reducible equivariant vector bundles over exceptional homogeneous varieties of Picard number one. 
The Hodge numbers and the Euler--Poincar{\'e} characteristic $\chi$ of such Calabi--Yau $3$-folds are listed in Table~\ref{table2}.

\subsection{Computations of $h^{2, 2}(Z)$}
In this section, we introduce a way to compute the remaining Hodge number~$h^{2, 2}(Z)$.
In order to compute the Hodge number $h^{2, 2}(Z)$, 
we may employ the exact sequence 
\begin{equation}\label{eq_sequence_h22}
0 \to S^2 \mathcal F^*|_Z  \to (\mathcal F^* \otimes \Omega_{G/P})|_Z \to \Omega_{G/P}^2|_Z \to \Omega_Z^2 \to 0
\end{equation}
obtained from the second exterior power of the conormal sequence; 
see, for example,~\cite[Section~3.9.1]{FM21}. 
After determining the irreducible decomposition of each of $S^2 \mathcal F^* \otimes \Exterior^i \mathcal F^*$, $(\mathcal F^* \otimes \Omega_{G/P}) \otimes \Exterior^i \mathcal F^*$ 
and $\Omega_{G/P}^2 \otimes \Exterior^i \mathcal F^*$, 
several and rather long applications of the Borel--Weil--Bott theorem provide the Hodge number $h^{2, 2}(Z)$.  
We examine the above method in the following example. The interesting reader can use the same techniques for the remaining examples in Table~\ref{table1}. 

\begin{example}[No. 9 in Table~\ref{table1}]
Let $Z$ be the zero locus of a general global section of the line bundle $\mathcal F = \mathcal O(3)$ over $G_2/P_2$. In this case, the sequence~\eqref{eq_sequence_h22} becomes
\begin{equation}\label{eq_G2P2_h22}
0 \to \mathcal O(-6)|_Z \to \Omega_{G_2/P_2}(-3)|_Z \to \Omega_{G_2/P_2}^2|_Z \to \Omega_Z^2 \to 0.
\end{equation}
We compute $H^q(Z, \mathcal O(-6)|_Z)$, $H^q(Z, \Omega_{G_2/P_2}(-3)|_Z)$, and $H^q(Z, \Omega_{G_2/P_2}^2|_Z)$ by considering the Koszul complex~\eqref{eq_Koszul_G2P2}. 

\smallskip 
\noindent \textsf{\fbox{Step 1.}} We first consider $H^q(Z, \mathcal O(-6)|_Z)$. 
To compute the cohomology, tensoring the Koszul complex~\eqref{eq_Koszul_G2P2} with~$\mathcal O(-6)$ yields
\[
0 \to \mathcal O(-9) \to \mathcal O(-6) \to \mathcal O (-6)|_Z \to 0.
\]
Since $\text{RegInd}(\mathcal{O}(-6)) = \text{RegInd}(\mathcal O(-9)) = \{5\}$, using the Borel--Weil--Bott theorem, we obtain
\[
\begin{split}
&H^5(G_2/P_2, \mathcal O(-6)) \cong \C^{273}, \quad H^q(G_2/P_2,\mathcal O(-6)) = 0 \text{ for } q \neq 5, \\
&H^5(G_2/P_2,\mathcal O(-9)) \cong \C^{3542}, \quad H^q(G_2/P_2,\mathcal O(-9)) = 0 \text{ for } q \neq 5.
\end{split}
\]
Accordingly, we obtain
\begin{equation}\label{eq_G2P2_h22_1}
\begin{split}
H^4(Z, \mathcal O(-6)|_Z) &\cong H^5(G_2/P_2, \mathcal O(-9))/ H^5(G_2/P_2, \mathcal O(-6)) \cong \C^{3269}; \\
H^q(Z, \mathcal O(-6)|_Z) &= 0 \quad \text{ for } q \neq 4.
\end{split}
\end{equation}

\smallskip
\noindent \textsf{\fbox{Step 2.}} We consider  $H^q(Z, \Omega_{G_2/P_2}(-3)|_Z)$.
Tensoring the Koszul complex~\eqref{eq_Koszul_G2P2} with~$\Omega_{G_2/P_2}(-3)$, we obtain 
\begin{equation}\label{eq_G2P2_Otensor-3_Z}
0 \to \Omega_{G_2/P_2}(-6) \to \Omega_{G_2/P_2}(-3) \to \Omega_{G_2/P_2}(-3)|_Z \to 0.
\end{equation}
Tensoring the exact sequence~\eqref{eq_G2P2_Omega} with $\mathcal O(-3)$, we get
\[
0 \to \mathcal E_{-4 \varpi_2} \to \Omega_{G_2/P_2}(-3) \to \mathcal E_{3 \varpi_1 - 5 \varpi_2} \to 0. 
\]
Since the weight $3 \varpi_1 - 5 \varpi_2$ is singular and $\text{RegInd}(\mathcal E_{-4 \varpi_2}) = \{5\}$, applying the Borel--Weil--Bott theorem, all cohomologies $H^q(G_2/P_2, \mathcal E_{3 \varpi_1 - 5 \varpi_2})$ vanish and $H^5(G_2/P_2, \mathcal E_{-4 \varpi_2}) \cong \C^{14}$. Accordingly,
\begin{equation}\label{eq_G2P2_Otensor-3}
H^5(G_2/P_2, \Omega_{G_2/P_2}(-3)) \cong H^5(G_2/P_2, \mathcal E_{-4 \varpi_2}) \cong \C^{14}; \quad H^q(G_2/P_2, \Omega_{G_2/P_2}(-3)) = 0 \quad \text{ for } q \neq 5.
\end{equation}

Similarly, tensoring the exact sequence~\eqref{eq_G2P2_Omega} with $\mathcal O(-6)$, we get 
\[
0 \to \mathcal E_{-7 \varpi_2} \to \Omega_{G_2/P_2}(-6) \to \mathcal E_{3 \varpi_1 - 8 \varpi_2} \to 0. 
\]
By applying the Borel--Weil--Bott theorem again, we obtain
\begin{equation}\label{eq_G2P2_Otensor-6}
\begin{split}
&H^5(G_2/P_2,  \Omega_{G_2/P_2}(-6)) \cong  
H^5(G_2/P_2, \mathcal E_{-7 \varpi_2}) \oplus H^5(G_2/P_2,\mathcal E_{3 \varpi_1 - 8 \varpi_2}) 
\cong \C^{748} \oplus \C^{1547} \cong \C^{2295}; \\
&H^q(G_2/P_2,  \Omega_{G_2/P_2}(-6)) = 0 \quad \text{ for } q \neq 5. 
\end{split}
\end{equation}
Using the computations~\eqref{eq_G2P2_Otensor-3} and~\eqref{eq_G2P2_Otensor-6}, from the exact sequence~\eqref{eq_G2P2_Otensor-3_Z} we have
\[
\begin{tikzcd}[column sep = 0.5cm, row sep = 0.3cm]
0 \rar & H^4(Z, \Omega_{G_2/P_2}(-3)|_Z) \rar
& H^5(G_2/P_2, \Omega_{G_2/P_2}(-6)) \rar
& H^5(G_2/P_2, \Omega_{G_2/P_2}(-3)) \rar
& 0 \\
&
& \C^{2295} \arrow[u, labels = description, "\rotatebox{90}{$\cong$}"]
& \C^{14} \arrow[u, labels = description, "\rotatebox{90}{$\cong$}"]
\end{tikzcd}
\]
Accordingly, we obtain
\begin{equation}\label{eq_G2P2_h22_2}
H^4(Z, \Omega_{G_2/P_2}(-3)|_Z) \cong \C^{2281}; \quad H^q(Z, \Omega_{G_2/P_2}(-3)|_Z) = 0 \quad \text{ for } q \neq 4.
\end{equation}

\smallskip 
\noindent \textsf{\fbox{Step 3.}} We consider $H^q(Z, \Omega_{G_2/P_2}^2|_Z)$. 
Tensoring the Koszul complex~\eqref{eq_Koszul_G2P2} with $\Omega_{G_2/P_2}^2$, we obtain
\begin{equation}\label{eq_G2P2_O2tensor}
0 \to \Omega_{G_2/P_2}^2(-3) \to \Omega_{G_2/P_2}^2 \to \Omega_{G_2/P_2}^2|_Z \to 0.
\end{equation}
Using $\Omega^2_{G_2/P_2} = \Exterior^2 \Omega_{G_2/P_2} 
= G_2 \times_{P_2} \Exterior^2 T_o(G_2/P_2)^{*}$ and the description of $T_o(G_2/P_2)$ in~\eqref{eq_ToG2P2}, we get a short exact sequence
\begin{equation}\label{eq_G2P2_O2_G2P2}
0 \to \mathcal E_{3 \varpi_1 - 3 \varpi_2} \to \Omega_{G_2/P_2}^2 \to \mathcal E_{-\varpi_2} \oplus \mathcal E_{4\varpi_1 - 3 \varpi_2} \to 0.
\end{equation}
By applying the Borel--Weil--Bott theorem on vector bundles $\mathcal E_{3 \varpi_1 - 3 \varpi_2}$, $\mathcal E_{-\varpi_2}$, and $\mathcal E_{4\varpi_1 - 3 \varpi_2}$, we obtain 
\[
\begin{split}
&H^q(G_2/P_2, \mathcal E_{3 \varpi_1 - 3 \varpi_2} ) = 0 \quad \text{ for all }q, \\
&H^2(G_2/P_2, \mathcal E_{-\varpi_2} \oplus \mathcal E_{4\varpi_1 - 3 \varpi_2} ) 
\cong \C; \quad H^q(G_2/P_2, \mathcal E_{-\varpi_2} \oplus \mathcal E_{4\varpi_1 - 3 \varpi_2} ) = 0 \quad \text{ for }q \neq 2.
\end{split}
\]
Accordingly, we have
\begin{equation}\label{eq_G2P2_h22_3_1}
H^2(G_2/P_2,\Omega_{G_2/P_2}^2) \cong \C; \quad
H^q(G_2/P_2,\Omega_{G_2/P_2}^2) = 0 \quad \text{ for } q \neq 2.
\end{equation}

On the other hand, tensoring the exact sequence~\eqref{eq_G2P2_O2_G2P2} with $\mathcal O(-3)$, we obtain
\[
0 \to \mathcal E_{3 \varpi_1 - 6 \varpi_2} \to \Omega_{G_2/P_2}^2(-3) \to \mathcal E_{-4\varpi_2} \oplus \mathcal E_{4\varpi_1 - 6 \varpi_2} \to 0.
\]
Using the Borel--Weil--Bott theorem, we get  
\[
\begin{split}
& H^5(G_2/P_2, \mathcal E_{3 \varpi_1 - 6 \varpi_2}) \cong \C^{77}; \quad 
H^q(G_2/P_2, \mathcal E_{3 \varpi_1 - 6 \varpi_2}) = 0 \quad \text{ for } q \neq 5, \\
& H^5(G_2/P_2, \mathcal E_{-4\varpi_2} \oplus \mathcal E_{4\varpi_1 - 6 \varpi_2}) \cong \C^{14}; \quad
H^q(G_2/P_2, \mathcal E_{-4\varpi_2} \oplus \mathcal E_{4\varpi_1 - 6 \varpi_2}) = 0 \quad \text{ for } q \neq 5.
\end{split}
\]
Therefore, we have
\begin{equation}\label{eq_G2P2_h22_3_2}
H^5(G_2/P_2, \Omega^2_{G_2/P_2}(-3)) \cong \C^{91}; 
\quad H^q(G_2/P_2, \Omega^2_{G_2/P_2}(-3)) = 0 \quad \text{ for } q \neq 5.
\end{equation}
Using the computations~\eqref{eq_G2P2_h22_3_1} and~\eqref{eq_G2P2_h22_3_2}, from the exact sequence~\eqref{eq_G2P2_O2tensor} we obtain
\begin{equation}\label{eq_G2P2_h22_3}
H^q(Z, \Omega_{G_2/P_2}^2|_Z) \cong \begin{cases}
\C & \text{ if } q = 2, \\
\C^{91} & \text{ if } q = 4, \\
0 & \text{ otherwise}.
\end{cases}
\end{equation}

\smallskip 
From the sequence~\eqref{eq_G2P2_h22}, we obtain two short exact sequences 
\begin{align}
0 \to \mathcal O (-6)|_Z \to & \Omega_{G_2/P_2}(-3)|_Z \to K \to 0, \label{eq_sequence_a} \\
0 \to K \to & \Omega_{G_2/P_2}^2|_Z \to \Omega_Z^2 \to 0. \label{eq_sequence_b}
\end{align}
We note that $h^{1,2}(Z) = h^{0,2}(Z)= 0$ holds by Propositions~\ref{prop_h0q_Z} and~\ref{prop_h1q_Z}. Using the Hodge symmetry $h^{p,q}(Z) = h^{q,p}(Z)$ and the Serre duality $h^{p,q}(Z) = h^{4-p,4-q}(Z)$, we obtain
\[
h^{2,3}(Z) = h^{3,2}(Z) = h^{1,2}(Z) = 0, \quad
h^{2,4}(Z) = h^{4,2}(Z) = h^{0,2}(Z) = 0.
\]
Accordingly, $H^3(Z,\Omega_Z^2) = H^4(Z,\Omega_Z^2) = 0$ and the sequence~\eqref{eq_sequence_b} provides 
\begin{equation}\label{eq_G2P2_h22_H4ZK}
H^4(Z,K) \cong H^4(Z, \Omega^2_{G_2/P_2}|_Z) \cong \C^{91}.
\end{equation}
Here, the last isomorphism comes from~\eqref{eq_G2P2_h22_3}. 

On the other hand, because $H^q(Z, \mathcal O(-6)|Z) = H^q(Z, \Omega_{G_2/P_2}(-3)|Z) = 0$ for $q \neq 4$ by~\eqref{eq_G2P2_h22_1} and~\eqref{eq_G2P2_h22_2}, the sequence~\eqref{eq_sequence_a} provides $H^q(Z,K) = 0$ for $q \neq 3,4$ and 
\[
\begin{tikzcd}[column sep = 0.5cm, row sep = 0.3cm]
0 \rar & H^3(Z,K) \rar 
& H^4(Z, \mathcal O(-6)|Z) \rar 
& H^4(Z, \Omega_{G_2/P_2}(-3)|_Z) \rar 
& H^4(Z, K) \rar & 0. \\
&
& \C^{3269} \arrow[u, labels = description, "\rotatebox{90}{$\cong$}"]
& \C^{2281} \arrow[u, labels = description, "\rotatebox{90}{$\cong$}"]
& \C^{91} \arrow[u, labels = description, "\rotatebox{90}{$\cong$}"]
\end{tikzcd}
\]
Here, the isomorphisms come from~\eqref{eq_G2P2_h22_1},~\eqref{eq_G2P2_h22_2}, and~\eqref{eq_G2P2_h22_H4ZK}, respectively. 
Therefore, we obtain 
\[
H^3(Z,K) \cong \C^{1079}. 
\]
From the sequence~\eqref{eq_sequence_b}, we have
\[
\begin{tikzcd}[column sep = 0.5cm, row sep = 0.3cm]
0 \rar & H^2(Z,\Omega_{G_2/P_2}^2|_Z) \rar
& H^2(Z,\Omega_Z^2) \rar
& H^3(Z,K) \rar
& 0. \\
& \C \arrow[u, labels = description, "\rotatebox{90}{$\cong$}"]
& 
& \C^{1079} \arrow[u, labels = description, "\rotatebox{90}{$\cong$}"]
\end{tikzcd}
\]
Therefore, we conclude $h^{2,2}(Z) = 1079+1 = 1080$. 

We obtain the Hodge diamond as follows:
\[
\begin{tikzcd}[column sep = -0.3cm, row sep = 0, baseline=-0.5ex]
&&&& h^{0,0} \\
&&& h^{1,0} && h^{0,1} \\
&& h^{2,0} && h^{1,1} && h^{0,2} \\
& h^{3,0} && h^{2,1} && h^{1,2} && h^{0,3} \\
h^{4,0} && h^{3,1} && h^{2,2} && h^{1,3} && h^{0,4} \\
& h^{4,1} && h^{3,2} && h^{2,3} && h^{1,4} \\
&& h^{4,2} && h^{3,3} && h^{2,4} \\
&&& h^{4,3} && h^{3,4} \\
&&&& h^{4,4}
\end{tikzcd}
\quad = \quad
\begin{tikzcd}[column sep = 0, row sep = 0, baseline=-0.5ex]
&&&& 1 \\
&&& 0 && 0 \\
&& 0 && 1 && 0 \\
& 0 && 0 && 0 && 0 \\
1 && 258 && 1080 && 258 && 1 \\
& 0 && 0 && 0 && 0\\
&& 0 && 1 && 0 \\
&&& 0 && 0 \\
&&&& 1
\end{tikzcd}
\]
The Euler--Poincar\'e characteristic $\chi(Z)$ is 
\[
\chi(Z) = 1 + 1 + (1 + 258 + 1080 + 258 + 1) + 1 + 1 
= 1602.
\]
\end{example}

\begin{remark}
In~\cite{GHL14}, the Hodge numbers are computed for complete intersection Calabi--Yau fourfolds.
On the other hand, as is mentioned in Remark~\ref{rmk_Z_for_G2}, the fourfold $Z$ given in No. 8 is a complete intersection Calabi--Yau fourfold in $\mathbb{P}^6$. 
Following the computational result in~\cite{GHL14} (indeed, $Z$ corresponds to `\texttt{MATRIX NUMBER : 2}' in the file), we obtain $h^{2,2}(Z) = 1472$, and moreover, the Euler--Poincar{\'e} characteristic~$\chi(Z)$ is~$2190$. 
\end{remark}

\vskip 2em 


\providecommand{\bysame}{\leavevmode\hbox to3em{\hrulefill}\thinspace}
\providecommand{\MR}{\relax\ifhmode\unskip\space\fi MR }
\providecommand{\MRhref}[2]{%
  \href{http://www.ams.org/mathscinet-getitem?mr=#1}{#2}
}
\providecommand{\href}[2]{#2}

\end{document}